\colorlet{linkcolor}{green!50!black}
\DeclareTextFontCommand{\emph}{\boldmath\bfseries}
\numberwithin{equation}{section} 
\DeclarePairedDelimiter{\ceil}{\lceil}{\rceil}
\DeclareMathOperator{\Span}{span}
\DeclareMathOperator{\rank}{rank}
\DeclareMathOperator{\res}{res}
\DeclareMathOperator{\Hom}{Hom}
\DeclareMathOperator{\Gr}{Gr}
\DeclareMathOperator{\link}{link}
\newcommand{\cA}{\mathcal{A}}
\newcommand{\cC}{\mathcal{C}}
\newcommand{\cE}{\mathcal{E}}
\newcommand{\cF}{\mathcal{F}}
\newcommand{\cG}{\mathcal{G}}
\newcommand{\cL}{\mathcal{L}}
\newcommand{\cW}{\mathcal{W}}
\theoremstyle{plain} 
\newtheorem{thm}{Theorem}[section]
\newtheorem{cor}[thm]{Corollary}
\newtheorem{lem}[thm]{Lemma}
\newtheorem{prop}[thm]{Proposition}
\theoremstyle{definition} 
\newtheorem{example}[thm]{Example}
\newtheorem{defn}[thm]{Definition}
\newtheorem{notation}[thm]{Notation}
\theoremstyle{remark} 
\newtheorem{remark}[thm]{Remark}
\newcommand{\R}{\mathbb{R}}
\newcommand{\C}{\mathbb{C}}
\newcommand{\Z}{\mathbb{Z}}
\newcommand{\ov}[1]{\overline{#1}}
\newcommand{\De}{{\Delta}}
\newcommand{\lk}{\mathrm{link}}
\newcommand{\Hilb}{\mathrm{Hilb}}
\newcommand{\tdeg}{\mathrm{tdeg}}
\newcommand{\tdim}{\mathrm{tdim}}
\newcommand{\mmax}{\boldsymbol{m}}
\DeclareMathOperator{\BBox}{{Box}}
\DeclareMathOperator{\gen}{{gen}}
\DeclareMathOperator{\id}{id}
\DeclareMathOperator{\Ehr}{Ehr}
\title[Filtrations on combinatorial intersection cohomology]{Filtrations on combinatorial intersection cohomology and invariants of subdivisions}
\author{Ling Hei Tsang}
\address{Ling Hei Tsang, Department of Mathematics, The Ohio State University University, 231 W. 18th Ave., Columbus, OH 43210}
\email{tsang.79@osu.edu}
\keywords{Fan subdivisions, toric $h$-polynomial, $h^*$-polynomial, $cd$-index, filtrations}
\subjclass[2010]{52B05, 14F43, 52B20}
\begin{document}

\begin{abstract}
Motivated by definitions in mixed Hodge theory,
we define the weight filtration and the monodromy weight filtration on the combinatorial intersection cohomology of a fan.
These filtrations give a natural definition of the multivariable invariants of subdivisions of polytopes, lattice polytopes and fans, namely the mixed $h$-polynomial, the refined limit mixed $h^*$-polynomial,
and the mixed $cd$-index,
defined by Katz--Stapledon and Dornian--Katz--Tsang.
Previously,
only the refined limit mixed $h^*$-polynomial had a geometric interpretation, which came from 
filtrations on the cohomology of a sch\"on hypersurface.
Consequently, we generalize a positivity result on the mixed $h$-polynomial by Katz and Stapledon using the relative hard Lefschetz theorem of Karu.
\end{abstract}

\maketitle

\section{Introduction}\label{sec:intro}

There are enumerative polynomial invariants of polytopes and lattice polytopes,
coming from counting faces and lattice points.
These invariants are motivated by the cohomology of toric varieties and hypersurfaces in algebraic tori.
They admit multivariable enrichments, that are invariants of subdivisions,
defined using a formalism inspired by Kazhdan--Lusztig theory.
A natural question is if one can obtain these invariants of subdivisions directly,
using the theory of \textit{combinatorial intersection cohomology}
though the theory does not obviously come from geometry.
The objective of this paper is to introduce filtrations analogous to the \textit{weight and Hodge filtrations} in \textit{mixed Hodge theory} to give a natural definition of these invariants of subdivisions.

The combinatorial invariants are the following:
\vspace{-3pt}
\begin{itemize}[leftmargin=24pt]
\item The \emph{toric $h$-polynomial} $h(P; t)$ is a combinatorial invariant of polytopes and polyhedral complexes defined by Stanley \cite[Section~2]{Sta87}.
It is motivated by the intersection cohomology of toric varieties.
When a polyhedral complex $\mathcal{S}$ is simplicial, its toric $h$-polynomial encodes its face numbers.

\item The \emph{$h^*$-polynomial} $h^*(P; t)$ is a combinatorial invariant in Ehrhart theory,
coming from counting lattice points in lattice polytopes and lattice polyhedral complexes.
Notably, the $h^*$-polynomials of lattice polytopes admit a geometric interpretation,
using the cohomology of hypersurfaces in algebraic tori \cite[Section~4]{DK86}.

\item The \emph{$cd$-index} $\Phi_\Delta(c, d) \in \R\langle c, d \rangle$ is a polynomial in non-commuting variables $c$ and $d$ that encodes the flag $f$-numbers of a fan,
defined by Fine and Bayer--Klapper \cite[Theorem~4]{BK91}.
\end{itemize}

\begin{table}[H]
\footnotesize
\begin{tabular}{@{} >{\raggedright\arraybackslash\hspace{0pt}}p{7em} | >{\raggedright\arraybackslash\hspace{0pt}}p{11.3em} |
>{\raggedright\arraybackslash\hspace{0pt}}p{11.3em} |
>{\raggedright\arraybackslash\hspace{0pt}}p{11.3em} @{}}
Invariants & Toric $h$-polynomial & $h^*$-polynomial & $cd$-index \\
\hline
\hline
Local invariants & Local $h$-polynomial $\ell^h_P(\mathcal{S}; t)$ \cite{Sta92} & Local $h^*$-polynomial $\ell^*(P; t)$ \cite{Sta92}
and local limit mixed $h^*$-polynomial $\ell^*(P, \mathcal{S}; u, v)$ \cite{KS16a}
& Local $cd$-index $\ell^\Phi_\Delta(c, d)$ \cite{Kar06} \\
\hline
Decomposition theorems & See \eqref{eq:decompforh}
\cite{Ath12, KS16a, Sta92}
& See \eqref{eq:hstardecomp} \cite{KS16a, Sta92}
& See \eqref{eq:cddecomp}
\cite{EK07, DKT20}
 \\
\hline
Multivariable invariants of subdivisions  &  Mixed $h$-polynomial $h_P(\mathcal{S}; u, v)$  \cite{KS16a} &
Refined limit mixed $h^*$-polynomial $h^*(P, \mathcal{S}; u, v, w)$ \cite{KS16a}
&
Mixed $cd$-index $\Omega_\pi(c', d', c, d)$ \cite{DKT20}
\\
\hline
Corresponding pure sheaves & Minimal extension sheaf $\cL_\Delta$ with structure sheaf $\cA$ \cite{BBFK02, BL03, Kar04}
& Ehrhart sheaf $\cE_\Delta$ \cite{Kar08} & Minimal extension sheaf $\cL_\Delta$ with structure sheaf $\cC$ \cite{Kar06}
\end{tabular}
\end{table}

There is a procedure for extending these invariants to multivariable invariants of subdivisions,
namely the mixed $h$-polynomial, the refined limit mixed $h^*$-polynomial, and the mixed $cd$-index,
using their local invariants and decomposition theorems:
\begin{align*}
h_P(\mathcal{S}; u, v) & = \sum_{Q\subseteq P}
v^{\dim (Q)+1} \ell^h_{Q}(\mathcal{S}|_Q; u v^{-1}) \cdot h(\lk_P Q; uv);
\\
h^*(P, \mathcal{S}; u, v, w) & = \sum_{Q\subseteq P} w^{\dim (Q) + 1} \ell^*(Q, \mathcal{S}|_Q; u, v) \cdot h(\lk_P Q; uvw^2);
\\
\Omega_\pi(c', d', c, d) & = \sum_{\sigma\in \Delta} \ell_{\pi^{-1}(\langle \sigma \rangle)}^\Phi(c', d') \otimes \Phi_{\lk_\Delta\sigma}(c, d).
\end{align*}
The first two of which, namely the \emph{mixed $h$-polynomial} $h_P(\mathcal{S}; u, v)$ and the \emph{refined limit mixed $h^*$-polynomial} $h^*(P, \mathcal{S}; u, v, w)$,
were defined by Katz and Stapledon \cite[Section~5, 9]{KS16a} for a lattice polyhedral subdivision $\mathcal{S}$ of a lattice  polytope $P$, using a formalism motivated by Kazhdan--Lusztig theory.
Later, Dornian, Katz and the author \cite[Section~6]{DKT20} defined the \emph{mixed $cd$-index} $\Omega_\pi(c', d', c, d)$ in a similar fashion for a fan subdivision $\pi\colon \Sigma\to \Delta$.

Among these invariants of subdivisions,
the refined limit mixed $h^*$-polynomial has a geometric interpretation \cite[Section~5]{KS16b}.
We give a brief description here; see \cite{PS08} for background in mixed Hodge theory.
If $\mathcal{S}$ is a \textit{regular} lattice polyhedral subdivision of a lattice polytope $P$,
then one can pick a \textit{sch\"on} hypersurface $X^\circ \subseteq (\C(t)^*)^{\dim(P)}$ with associated Newton polytope and polyhedral subdivision $(P, \mathcal{S})$.
Here sch\"on is a smoothness condition; see for example \cite[Definition~6.4.18]{MS15}.
The hypersurface $X^\circ$ corresponds to a locally trivial fibration $f\colon X^\circ \to \mathbb{D}^*$, where $\mathbb{D}^*$ is a small punctured disk about the origin in $(\C(t)^*)^{\dim(P)}$.
The complex cohomology with compact support $H^m_c(X^\circ_{\gen})$ of
a nonzero fiber $X^\circ_{\gen} := f^{-1}(t)$
is equipped with a decreasing filtration $F^\bullet$ called the \textit{Hodge filtration}, and two increasing filtrations $W_\bullet$ and $M_\bullet$ called the \textit{weight filtration} and the \textit{monodromy weight filtration}, respectively.
Then the \textit{refined limit Hodge--Deligne polynomial} of $X^\circ$,
a polynomial defined using filtrations on $H^m_c(X^\circ_{\gen})$,
gives the refined limit mixed $h^*$-polynomial of $(P, \mathcal{S})$ \cite[Theorem~1.5]{KS16b}.

This leads to the following question:
\textit{Can we obtain these combinatorial invariants of subdivisions intrinsically and systematically?}
In this paper, we give an affirmative answer to the question by defining filtrations on the combinatorial intersection cohomology of a fan.
In particular, we study the invariants of a fan instead of a polytope.

The theory of combinatorial intersection cohomology of a fan was introduced independently by
Barthel--Brasselet--Fieseler--Kaup \cite{BBFK02}
and
Bressler--Lunts \cite{BL03}.
The idea is to consider a \emph{pure sheaf} $\cF$
on a fan $\Delta$, such that the Poincar\'e polynomial of the graded vector space $\ov{\cF(\Delta)}$ (arising as a reduction of the module of
global sections) coincides with the desired combinatorial invariant.
Here, a pure sheaf is a locally free flabby sheaf of $\Z_{\geq 0}$-graded $\cA$-modules; 
see Definition~\ref{defn:puresheaf}.
Notably, the minimal extension sheaf $\cL_\Delta$ with structure sheaf $\cA$
is the pure sheaf that gives the toric $h$-polynomial;
see Section~\ref{sec:hpoly}.
Hence the theory serves as a substitute for intersection cohomology when the fan is non-rational.
Indeed, this is crucial in Karu's proof of the hard Lefschetz theorem for non-rational polytopes \cite[Theorem~0.1]{Kar04}.
Similarly, there are pure sheaves that give the $h^*$-polynomial and the $cd$-index, as summarized in the table above.
We refer the reader to \cite{Bra06} for a survey of this theory.

Moreover,
the theory of combinatorial intersection cohomology
admits a \emph{decomposition theorem} (Theorem~\ref{t:decomptheorem}),
proved in \cite[Theorem~2.4]{BBFK02} and \cite[Theorem~5.3]{BL03}.
It states that 
a pure sheaf can be decomposed into a direct sum of shifts of simple sheaves
and that 
the direct image of a pure sheaf under a fan subdivision is again a pure sheaf.
This is analogous to the decomposition theorem proved by Beilinson, Bernstein, Deligne and Gabber \cite[Theorem~6.2.5]{BBD82} specialized to proper birational morphisms of toric varieties; see \cite{dCM09} for a survey on the theorem and see \cite{dCMM18} for a study of the theorem for proper toric morphisms.
In particular, the decomposition theorem for the combinatorial invariants can be deduced from the decomposition theorem for pure sheaves, 
where the invariant is given by the \textit{Poincar\'e polynomial}
and the local invariant is given by 
the \textit{local Poincar\'e polynomial}, 
as defined in Section~\ref{sec:puresheaves}.

We define the filtrations on the combinatorial intersection cohomology as follows.
We first define the \emph{weight sheaves} $\cW^r \cF$,
a decreasing filtration of subsheaves,
on a pure sheaf $\cF$ with structure sheaf $\cA$ on $\Delta$.
They naturally induce a \textit{decreasing} filtration $W^\bullet$ on $\ov{\cF(\Delta)}$, which we call the \emph{weight filtration}.
This definition is motivated by the definition of the weight filtration in mixed Hodge theory (see for example \cite[Theorem~4.2]{PS08}),
though we note that in mixed Hodge theory the weight filtration is an increasing filtration.

Now, for a fan subdivision $\pi\colon \Sigma\to \Delta$ and a pure sheaf $\cF$ on $\Sigma$,
the graded vector space $\ov{\pi_*\cF(\Delta)}$ is equipped with the weight filtration $W^\bullet$ from the weight sheaves $\cW^r \pi_* \cF$ of the pure sheaf $\pi_*\cF$ on $\Delta$.
Furthermore, the direct images of the weight sheaves $W^r\cF$ of the pure sheaf $\cF$ on $\Sigma$ induce an additional \textit{decreasing} filtration $M^\bullet$ on $\ov{\pi_* \cF(\Delta)}$,
which we call the \emph{monodromy weight filtration}.
We also define an \textit{increasing} filtration $F^\bullet$,
which we call the \emph{Hodge filtration}, using the grading of the $\Z_{\geq 0}$-graded vector space $\ov{\pi_*\cF(\Delta)}$.
Hence $\ov{\pi_*\cF(\Delta)}$ is equipped with three filtrations.
Its \emph{refined limit Hodge--Deligne polynomial} is defined to be the generating function of the dimensions of the associated gradeds:
\[
E(\ov{\pi_*\cF(\Delta)}, F_\bullet, W^\bullet, M^\bullet; u, v, w) :=
\sum_{p, q, r} \dim_\R
(
\Gr^F_p \Gr_M^{p+q} \Gr_W^{r} \ov{\pi_*\cF(\Delta)}
) u^p v^q w^r.
\]
Then we prove the following result for the \textit{Ehrhart sheaf} $\cE_\Sigma$.
Here, a fan subdivision between \textit{Gorenstein fans} with \textit{equal degree map} 
is the fan analogue 
of 
a lattice polyhedral subdivision of a lattice polyhedral complex; see Section~\ref{sec:Ehrhart} for its definition.

\begin{restatable*}{thm}{resmixedhstar}\label{t:refinedEhrhart}
Let $\pi\colon \Sigma\to \Delta$ be a fan subdivision between quasi-convex Gorenstein fans in $\R^d$ with equal degree map.
Then the refined limit Hodge--Deligne polynomial of the tuple $(\ov{\pi_*\cE_\Sigma(\Delta)}, F_\bullet, W^\bullet, M^\bullet)$ is equal to the refined limit mixed $h^*$-polynomial $h^*(\Delta, \Sigma; u, v, w)$.
\end{restatable*}

If we discard the monodromy weight filtration,
then $\ov{\pi_*\cF(\Delta)}$ is equipped with just the Hodge filtration $F_\bullet$ and the weight filtration $W^\bullet$.
Its \emph{Hodge--Deligne polynomial} is defined to be
\[
E(\ov{\pi_*\cF(\Delta)}, F_\bullet, W^\bullet; u, v) :=
\sum_{p, q} \dim_\R
(
\Gr^F_p \Gr_W^{p+q} \ov{\pi_*\cF(\Delta)}
) u^p v^q.
\]
Then we prove the following result for the \textit{minimal extension sheaf $\cL_\Sigma$ with structure sheaf $\cA$}.

\begin{restatable*}{theorem}{resmixedh}
\label{t:mixedh}
Let $\pi\colon \Sigma\to \Delta$ be a fan subdivision between quasi-convex fans in $\R^d$.
Then the Hodge--Deligne polynomial of the tuple $(\ov{\pi_* \cL_\Sigma(\Delta)}, F_\bullet, W^\bullet)$ is equal to the mixed $h$-polynomial $h_{\Delta}(\Sigma; u, v)$.
\end{restatable*}

In particular, 
this implies the nonnegativity
of the coefficients of the mixed $h$-polynomial.
Furthermore, in the case of a \textit{projective} fan subdivision (the fan analogue of a regular polyhedral subdivision),
using the relative hard Lefschetz theorem proved by Karu \cite[Theorem~1.1]{Kar19},
we show that the coefficients are symmetric and unimodal; see Corollary~\ref{c:mixedhproperties}.
This generalizes a result on the mixed $h$-polynomial by Katz and Stapledon \cite[Corollary~6.7]{KS16a}, which requires the subdivision to be rational.

An analogue of Theorem~\ref{t:mixedh} holds for the mixed $cd$-index.
For a fan subdivision $\pi\colon \Sigma\to \Delta$ in $\R^d$,
the direct image sheaf $\pi_* \cL_\Sigma$ of the minimal extension sheaf $\cL_\Sigma$ with structure sheaf $\cC$ (the pure sheaf that gives the $cd$-index) induces a $(\Z_{\geq 0})^d$-graded vector space $\ov{\pi_*\cL_\Sigma(\Delta)}$.
Since $\ov{\pi_*\cL_\Sigma(\Delta)}$ is $(\Z_{\geq 0})^d$-graded instead of $\Z_{\geq 0}$-graded, we modify the definition of the weight sheaves, by considering 
the \textit{$t$-degree}, 
a specialization of the $(\Z_{\geq 0})^d$-grading:
Let $T\colon (\Z_{\geq 0})^d \to \Z_{\geq 0}$ be an additive map that sends each $e_i \in (\Z_{\geq 0})^d$ to $2^{i-1}$.
For a homogeneous element $f$ of degree $\deg(f) \in (\Z_{\geq 0})^d$, its \textit{$t$-degree} $\tdeg(f)$ is defined to be $T(\deg(f))$.
We then similarly define the weight filtration $W^\bullet$ and the Hodge filtration $F_\bullet$ on $\ov{\pi_*\cL_\Sigma(\Delta)}$,
and prove the following result for the \textit{minimal extension sheaf $\cL_\Sigma$ with structure sheaf $\cC$}.

\begin{restatable*}{theorem}{resmixedcd}\label{t:mixedcd}
Let $\pi\colon \Sigma\to \Delta$ be a fan subdivision between quasi-convex fans in $\R^d$.
Then the Hodge--Deligne polynomial of the tuple $(\ov{\pi_* \cL_\Sigma(\Delta)}, F_\bullet, W^\bullet)$ is equal to 
the image of the mixed $cd$-index $\Omega_{\pi}(c', d', c, d)$ under the injective linear map $\eta'$.
\end{restatable*}

\subsection{Organization of the paper}
The paper is organized as follows:
\begin{itemize}[leftmargin=24pt]
\item In Section~\ref{sec:prelim}, we give the necessary background on polyhedral fans, graded modules and sheaves on a fan.

\item In Section~\ref{sec:puresheaves}, we give an introduction to the theory of pure sheaves on a fan.

\item In Section~\ref{sec:weightsheaves}, we introduce the weight sheaves.

\item In Section~\ref{sec:polysfromfiltrations}, 
we define the filtrations using the weight sheaves.
We then define 
the Hodge--Deligne polynomials using these filtrations.

\item In Section~\ref{sec:hpoly}, we give an introduction to the mixed $h$-polynomial and prove Theorem~\ref{t:mixedh}.

\item In Section~\ref{sec:Ehrhart}, we give an introduction to the refined limit mixed $h^*$-polynomial and prove Theorem~\ref{t:refinedEhrhart}.

\item In Section~\ref{sec:cdindex}, we give an introduction to the mixed $cd$-index.
Then we define the weight sheaves in this setting and prove Theorem~\ref{t:mixedcd}.
\end{itemize}

\medskip
\noindent
{\it Acknowledgments.}
The author would like to thank his advisor Eric Katz, without whom this paper would not have been possible.
The author was partially supported through
his advisor's grant NSF DMS 1748837.

\section{Preliminaries}\label{sec:prelim}

In this section, we give a brief introduction to polyhedral fans, graded modules and sheaves on fans following \cite[Section~1]{BBFK02}.

\subsection{Fans}\label{subsec:fans}

A \textit{cone} is a subset $\sigma$ of $\R^d$ such that
$\sigma = \R_{\geq 0} v_1+ \dots+ \R_{\geq 0} v_n$ for some $v_1, \dots, v_n \in \R^d$.
The dimension of $\sigma$ is the dimension of the linear subspace spanned by $\sigma$.
Let $o:= \{0\}$ be the cone containing only the origin. 

A face of a cone $\sigma$ is the cone given by $\{u\in \sigma : w(u)=0 \}$, where $w\in (\R^d)^*$ is a linear form satisfying $w(\sigma)\geq 0$.
We write $\tau \leq  \sigma$ if $\tau$ is a face of $\sigma$ and $\tau < \sigma$ if $\tau$ is a proper face of $\sigma$.
A cone is \textit{pointed} (also known as strongly convex) if it does not contain a line through the origin.

A \textit{fan} is a finite collection $\Delta$ of pointed cones in $\R^d$ such that
\begin{enumerate}
\item \label{con:fan1}
every face of a cone in $\Delta$ is also contained in $\Delta$; and

\item \label{con:fan2}
the intersection of any two cones in $\Delta$ is a face of both cones.
\end{enumerate}
The dimension of a fan is the maximum of the dimensions of its cones.
Given a cone $\sigma$,
we let $\langle \sigma\rangle$ be the fan generated by $\sigma$, that is, the collection of all the faces of $\sigma$.

\begin{notation}
From now on, every cone is assumed to be pointed.
\end{notation}

A fan is \textit{simplicial} if 
every cone $\sigma$ in the fan 
can be generated by $\dim(\sigma)$ many vectors.

A fan is \textit{purely $n$-dimensional} if every maximal cone is of dimension $n$.

The support of a fan $\Delta$, denoted by $|\Delta|$, is defined to be the union of its cones.
A fan in $\R^d$ is \textit{complete} if the support is the whole space $\R^d$.
A complete fan $\Delta$ is \textit{projective} if it admits a strictly convex conewise linear function $\ell$.
We note that a complete fan is projective if and only if it is the face fan of a $d$-dimensional polytope that contains the origin in the interior.

A subfan $U$ of a fan $\Delta$ is a subset of $\Delta$ that is also a fan.
The \textit{boundary fan} $\partial \Delta$ of a fan $\Delta$ is the subfan generated by the non-maximal cones contained in exactly one maximal cone.
Given a cone $\sigma$, we denote by $\partial \sigma$ the boundary fan of $\langle \sigma \rangle$.
We note that if a fan $\Delta$ is purely $n$-dimensional, 
then its boundary fan is supported on the topological boundary of $|\Delta|$.

For a cone $\sigma$ in a fan $\Delta$, we define the \textit{link} of $\sigma$,
which we denote by $\lk_\Delta \sigma$,
as follows.
Let $p\colon \R^d \to \R^d/ \Span(\sigma)$ be the quotient map. 
Given a cone $\tau \geq \sigma$, 
we denote the image of $\tau$ under $p$ by $\ov{\tau}$.
Then the link $\lk_\Delta \sigma$ is defined to be the fan
\[
\lk_\Delta \sigma := \{ \ov{\tau} : \tau \geq \sigma\}.
\]

\begin{defn}[{\cite[Theorem~4.4]{BBFK02}}]
A purely $d$-dimensional fan $\Delta$ in $\R^d$ is \emph{quasi-convex} if $|\partial \Delta|$ is a real homology manifold (see for example \cite{Mio00} for its definition).
\end{defn}

Examples of quasi-convex fans include complete fans (which has empty boundary), fans generated by a full-dimensional cone, and fans where the supports or the complements of the supports are full-dimensional convex sets.

\begin{prop}[{\cite[Theorem~4.3]{BBFK02}}]\label{p:linkofqcisqc}
Let $\Delta$ be a quasi-convex fan.
Then for every $\sigma \in \Delta$,
the link $\lk_\Delta \sigma$ is again a quasi-convex fan.
\end{prop}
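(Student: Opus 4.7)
The strategy is to verify directly the two defining properties of quasi-convexity for the fan $\Gamma := \lk_\Delta \sigma$ in $\R^{d-k}$, where $k := \dim \sigma$. Purity of dimension $d-k$ is immediate from the purity of $\Delta$: every cone $\tau \in \Delta$ with $\tau \geq \sigma$ is a face of some $d$-dimensional maximal cone $\tau' \in \Delta$, and such a $\tau'$ automatically contains $\sigma$ (since $\sigma \leq \tau \leq \tau'$), so its image $\overline{\tau'}$ under the quotient $p \colon \R^d \to \R^d/\Span(\sigma)$ is a $(d-k)$-dimensional cone of $\Gamma$ having $\bar\tau$ as a face.

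For the boundary condition, the plan is to exploit a local product decomposition of $|\Delta|$ around a point $x$ in the relative interior of $\sigma$: a sufficiently small open neighborhood $B$ of $x$ in $\R^d$ admits a homeomorphism of pairs
\[
\bigl(B \cap |\Delta|,\; B \cap |\partial \Delta|\bigr) \;\cong\; U \times \bigl(C|\Gamma|,\; C|\partial \Gamma|\bigr),
\]
where $U \subseteq \Span(\sigma)$ is a small open ball around $0$ and $C(-)$ denotes the open cone (with apex $0$) on a compact spherical cross-section. If $x$ lies in the topological interior of $|\Delta|$, the left-hand side is an open $d$-ball, which forces $|\Gamma| = \R^{d-k}$; thus $\Gamma$ is complete and $|\partial \Gamma|$ is empty, trivially a real homology manifold. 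Otherwise $x \in |\partial \Delta|$, and near $x$ the space $|\partial \Delta|$ is a real homology $(d-1)$-manifold; applying the K\"unneth formula to the product decomposition and comparing the local homology groups $H_*(|\partial \Delta|, |\partial \Delta|\setminus\{y\};\R)$ against those of $U \times C|\partial \Gamma|$ at corresponding points $y$ forces $|\partial \Gamma|$ to have the local homology of $\R^{d-k-2}$ at each of its points, which is precisely the condition that $|\partial \Gamma|$ is a real homology $(d-k-2)$-manifold.

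The main obstacle is justifying the local product decomposition, and in particular checking that it identifies the combinatorial boundary $\partial\Delta$ with the cone over the combinatorial boundary $\partial \Gamma$. This reduces to the combinatorial statement that, for $\tau \geq \sigma$, one has $\tau \in \partial \Delta$ if and only if $\bar\tau \in \partial \Gamma$, which in turn follows from the fact that every $d$-dimensional cone of $\Delta$ containing $\tau$ must also contain $\sigma$ and therefore corresponds bijectively under $p$ to a maximal cone of $\Gamma$ containing $\bar\tau$; thus the number of maximal cones above $\tau$ in $\Delta$ equals the number of maximal cones above $\bar\tau$ in $\Gamma$. Once this identification is in hand, the conclusion follows by the standard excision and K\"unneth arguments sketched above.
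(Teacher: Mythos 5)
The paper does not provide a proof of this proposition; it simply cites \cite[Theorem~4.3]{BBFK02}. Your attempt is therefore an independent argument, and the overall strategy --- establish a local product decomposition of $(|\Delta|, |\partial\Delta|)$ near a relative interior point of $\sigma$, reduce the combinatorial boundary matching to counting maximal cones above $\tau$ versus above $\bar\tau$, and then propagate the homology-manifold property via K\"unneth and excision --- is sound. The combinatorial claim (every $d$-dimensional cone $\gamma\geq\tau$ satisfies $\gamma\geq\sigma$ by transitivity, and $\gamma\mapsto\bar\gamma$ is a bijection on such cones) is correct, and the case distinction between $x\in\Int|\Delta|$ (forcing $\Gamma$ complete, since $|\Gamma|$ is a cone containing a neighborhood of the origin) and $x\in|\partial\Delta|$ is handled properly.

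There is, however, an off-by-one error in the degree of the local homology. Since $\Delta$ is purely $d$-dimensional in $\R^d$, its boundary $|\partial\Delta|$ is a real homology $(d-1)$-manifold; $\Gamma=\lk_\Delta\sigma$ is purely $(d-k)$-dimensional in $\R^{d-k}$ (as you correctly note in your purity step), so $|\partial\Gamma|$ should be a real homology $(d-k-1)$-manifold, not $(d-k-2)$. Running the K\"unneth computation you describe confirms this: the local homology of $U\times C|\partial\Gamma|$ at $(u,z)$ factors as $H_*(\R^k,\R^k\setminus\{u\})\otimes H_*(C|\partial\Gamma|, C|\partial\Gamma|\setminus\{z\})$, and for the tensor product to be concentrated in degree $d-1$, the second factor must be concentrated in degree $d-1-k = d-k-1$. (You may have momentarily conflated $|\partial\Gamma|$ with its $(d-k-2)$-dimensional spherical cross-section.) You also need to cover all points of $|\partial\Gamma|$: excision applied to the truncated cone $C|\partial\Gamma|\subseteq|\partial\Gamma|$ gives the local homology at the apex $0$ and at points $tv$ with $0<t<1$, and the scaling action of $\R_{>0}$ on $|\partial\Gamma|$ extends the conclusion to all of $|\partial\Gamma|$. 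With the dimension corrected and these small details filled in, the proof goes through.
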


We turn our attention to subdivisions of fans. 

\begin{defn}
A fan $\Sigma$ is a \textit{refinement} of a fan $\Delta$ if every cone of $\Sigma$ is contained in some cone of $\Delta$ and $|\Sigma| = |\Delta |$.
A set function $\pi\colon \Sigma \to \Delta$ is a \emph{fan subdivision} if $\Sigma$ is a refinement of $\Delta$ and $\pi$ sends each $\sigma\in \Sigma$ to the smallest cone in $\Delta$ containing $\sigma$.
\end{defn}

A fan subdivision $\pi\colon \Sigma \to \Delta$ is  \textit{projective} if it admits a \textit{relatively strictly convex function} with respect to $\pi$ (see \cite[p.~861]{Kar19} for its definition).
If $\mathcal{S}$ is a regular polyhedral subdivision of a polytope $P$
(see \cite[Section~2.3]{DLRS10} and \cite[Section~1.F]{GB09} for equivalent definitions of a regular polyhedral subdivision),
then the induced fan subdivision is projective.

We introduce \textit{proper fan morphisms},
a more general notion of fan maps, as follows.
A \textit{fan morphism} $\pi\colon \Sigma\to \Delta$ \cite[Section~1.4]{Ful93} is a map induced by an underlying linear map $P\colon \R^{d_1} \to \R^{d_2}$, such that for a cone $\sigma\in \Sigma$, its image $\pi(\sigma)$ is the smallest cone in $\Delta$ containing $P(\sigma)$.
In particular, a fan subdivision is a fan morphism induced by the identity map.
A fan morphism is \emph{proper} \cite[Section~2.4]{Ful93} if $P^{-1}(|\Delta|) = |\Sigma|$.

\subsection{Graded modules}\label{subsec:graded}

The results in this section can be found in
\cite[Section~11]{AM69}
and
\cite{BH93}.

For a $\Z_{\geq 0}$-graded vector space $V = \oplus_{i\in \Z_{\geq 0}} V^i$, we define the \textit{Hilbert series} of $V$ to be
\[\Hilb(V; t) = \sum_{i\in \Z_{\geq 0}} \dim_\R(V^i) t^i.\]
We write $V^{\geq i}$ and $V^{\leq i}$ for $\oplus_{j\geq i} V^j$ and $\oplus_{j\leq i} V^j$, respectively.
For an integer $j$, we define the \textit{shifted graded vector space} $V[-j]$ by $(V[-j])^{i} = V^{i-j}$.

Let $A= \R[x_1, \dots, x_d]$ be the graded ring of polynomial functions on $\R^d$, where $\deg(x_i)  = 1$ for every $i$.
Then $\mmax := (x_1, \dots, x_d)$ is the homogeneous maximal ideal of $A$.

\begin{defn}
Let $M$ be a finitely generated graded $A$-module.
We define $\ov{M}$ to be the graded vector space $M\otimes_A A/\mmax = M/\mmax M$ and $\rho\colon M\to \ov{M}$ to be the corresponding quotient map.
\end{defn}

We observe that
for each $k \in \Z_{\geq 0}$, 
the $A$-submodule $M^{\geq k}$ is sent to the subspace $\ov{M}^{\geq k}$ under $\rho$:
\begin{equation}\label{eq:SubmoduleSentToSubspace}
\rho(M^{\geq k}) = \ov{M}^{\geq k}.
\end{equation}


For a finitely generated graded $A$-module $M$, we define the \emph{Poincar\'e polynomial} of $\ov{M}$ to be
\[
P(\ov{M}; t) = \sum_{i\in \Z_{\geq 0}} \dim_{\R} (\ov{M}^i) t^i.
\]
If $M$ is a free $A$-module, then 
\begin{equation}\label{eq:freeimplieshilbpoin}
\Hilb(M; t) = \frac{P(\ov{M}; t)}{(1-t)^d}.
\end{equation}

\subsection{Sheaves on fans}\label{subsec:sheaves}

Let $\Delta$ be a fan in $\R^d$.
We put a topology on $\Delta$ as follows:
the set of open sets is defined to be the set of subfans of $\Delta$.
Hence 
the subfans $\langle\sigma \rangle$ generated by a single cone 
form a basis of the topology.

To define a sheaf $\cF$ on $\Delta$, it suffices to define the stalk $\cF(\sigma)$ at every cone $\sigma$ and the map $\res_{\sigma, \tau}\colon \cF(\sigma) \to \cF(\tau)$ for every pair of cones $\sigma \geq \tau$ such that
$\res_{\tau, \gamma}\circ \res_{\sigma, \tau}  = \res_{\sigma, \gamma}$
whenever $\sigma \geq \tau \geq \gamma$.
Then, for a general open set $U$, the set $\cF(U)$ is given by
\[
\cF(U) = \{ (f_\sigma)_{\sigma\in U} : f_\sigma \in \cF(\sigma) ,
\res_{\sigma, \tau}(f_{\sigma}) = f_\tau \text{ for all } \sigma \leq \tau
\},
\]
and for a pair of open sets $U\supseteq V$, the restriction map $\res\colon \cF(U)\to \cF(V)$ is given by forgetting the function $f_\sigma$ whenever $\sigma\notin V$.
In particular, we have $\cF(\langle \sigma \rangle) = \cF(\sigma)$, hence we may use these two notations interchangeably.

For every cone $\sigma \in \Delta$, we let $A_\sigma$ be the $\Z_{\geq 0}$-graded ring of polynomial functions on $\Span(\sigma)$,
where a linear function is homogeneous of degree $1$.
We define the \emph{structure sheaf} $\cA$ of a fan $\Delta$ as follows.
On each cone $\sigma$, we set $\cA(\sigma) := A_\sigma$.
For cones $\sigma \geq \tau$, the restriction map $\res_{\sigma, \tau} \colon \cA(\sigma) \to \cA(\tau)$ is the graded ring map
given by restricting the domain of a polynomial function.
This defines a sheaf of graded rings.

A sheaf $\cF$ is a \textit{sheaf of $\Z_{\geq 0}$-graded $\cA$-modules} if $\cF(U)$ is a $\Z_{\geq 0}$-graded $\cA(U)$-module whenever $U$ is an open set and the restriction maps of $\cF$ are graded module maps
compatible with that of $\cA$.
Recall that $A = \R[x_1, \dots, x_d]$ is the graded ring of polynomial functions on $\R^d$.
Hence each $\cF(U)$ is a graded $A$-module and each restriction map of $\cF$ is a graded $A$-module map.

A sheaf $\cF$ is \textit{flabby} if every restriction map is surjective.
To show the flabbiness of $\cF$, it suffices to show that $\res\colon \cF(\sigma) \to \cF(\partial \sigma)$ is surjective whenever $\sigma$ is a cone.
We note that the structure sheaf $\cA$ is in general not flabby.
Indeed, it is flabby if and only if the fan $\Delta$ is simplicial \cite[Proposition~1.4]{BBFK02}.

For notational convenience, 
we write 
$\cF(\Delta, \partial \Delta)$
for 
$\ker(\res \colon \cF(\Delta) \to  \cF(\partial \Delta))$.
For a cone $\sigma\in \Delta$,
we write 
$\cF(\sigma, \partial \sigma)$
for 
$\ker(\res \colon \cF(\sigma) \to  \cF(\partial \sigma))$.

For a sheaf $\cF$ and an integer $j\in \Z_{\geq 0}$, the \textit{shifted sheaf} $\cF[-j]$ is defined by 
\[
\cF[-j](U) = \cF(U)[-j]
\]
whenever $U$ is an open set.

Suppose $\pi\colon \Sigma\to \Delta$ is a fan subdivision. 
This is a continuous map. 
Thus given a sheaf $\cF$ of $\Z_{\geq 0}$-graded $\cA_\Sigma$-modules 
on $\Sigma$,
the \textit{direct image sheaf} $\pi_* \cF$ 
is 
a sheaf of $\Z_{\geq 0}$-graded $\cA_\Delta$-modules 
on $\Delta$ defined by
\[
\pi_* \cF(U) := \cF(\pi^{-1}(U))
\]
whenever $U\subseteq \Delta$ is an open set.

\section{Pure sheaves on a fan}\label{sec:puresheaves}

In this section we summarize constructions and results from \cite{BBFK02, Bra06, BL03, Kar04, Kar19}.

\begin{defn}\label{defn:puresheaf}
A sheaf $\cF$ of $\Z_{\geq 0}$-graded $\cA$-modules on a fan $\Delta$ is called a \emph{pure sheaf} if $\cF$ is flabby and $\cF(\sigma)$ is a finitely generated free $A_\sigma$-module whenever $\sigma$ is a cone.
\end{defn}

If $\cF$ is a pure sheaf on a fan $\Delta$,
then both $\cF(\Delta)$ and $\cF(\Delta, \partial \Delta)$ are finitely generated as $A$-modules \cite[p.~8]{BBFK02}.
Hence both 
$P(\ov{\cF(\Delta)}; t)$
and 
$P(\ov{\cF(\Delta, \partial \Delta)}; t)$
are well-defined. 

\begin{defn}\label{defn:SimpleSheaves}
For a cone $\sigma$ in a fan $\Delta$, a \emph{simple sheaf} $\cL_\sigma$ based at $\sigma$ is a sheaf of $\Z_{\geq 0}$-graded $\cA$-modules satisfying the following conditions:
\begin{itemize}
\item \textit{Normalization:} For each $\tau\in \Delta$ of dimension at most $\dim(\sigma)$,
\[
\cL_\sigma(\tau) = \begin{cases}
A_\sigma & \text{if } \tau  = \sigma,\\
0 & \text{otherwise.}
\end{cases}
\]

\item \textit{Local freeness:} For each cone $\tau \in \Delta$, the $A_\tau$-module $\cL_\sigma(\tau)$ is free.

\item \textit{Local minimal extension:} For each cone $\tau \in \Delta$ of dimension strictly greater than $\dim(\sigma)$, the restriction map $\res\colon \cL_\sigma(\tau) \to\cL_\sigma(\partial \tau)$ induces an isomorphism of graded vector spaces:
\[
\begin{tikzcd}
\ov{\res}\colon \ov{\cL_\sigma(\tau)}
\arrow{r}{\sim}
& \ov{\cL_\sigma(\partial \tau)}.
\end{tikzcd}
\]
\end{itemize}
\end{defn}

For each cone $\sigma$, the simple sheaf $\cL_\sigma$ exists 
and is unique up to an isomorphism \cite[Proposition~1.3]{BBFK02}.
We call the simple sheaf $\cL_o$ based at the zero cone $o$ the \emph{minimal extension sheaf} $\cL_\Delta$ of $\Delta$ \cite[Definition~1.1]{BBFK02}.

Clearly, a simple sheaf is a pure sheaf.
Furthermore, the simple sheaves are the simple objects in the category of pure sheaves, as summarized in the following decomposition theorem for pure sheaves proved in \cite[Theorem~2.4]{BBFK02} and \cite[Theorem~5.3]{BL03}.

\begin{thm}\label{t:decomptheorem}
Every pure sheaf $\cF$ on a fan $\Delta$ admits a direct sum decomposition into shifts of simple sheaves:
\[
\cF \simeq \bigoplus_{\sigma\in \Delta} K_\sigma\otimes_\R
\cL_{\sigma},
\]
where $K_\sigma := \ker(\ov{\res}\colon \ov{\cF(\sigma)}\to \ov{\cF(\partial \sigma)})$ is a $\Z_{\geq 0}$-graded vector space.
\end{thm}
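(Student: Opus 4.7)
The plan is to proceed by induction on the number of cones in $\Delta$. In the base case $\Delta = \{o\}$, the module $\cF(o)$ is free over $A_o = \R$---hence a graded vector space---and equal to $K_o$ (since $\partial o$ is empty), so together with $\cL_o(o) = \R$ we obtain $\cF \simeq K_o \otimes_\R \cL_o$.

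For the inductive step, I would pick a maximal cone $\mu \in \Delta$, set $\Delta' := \Delta \setminus \{\mu\}$ (a subfan by maximality of $\mu$), and apply the inductive hypothesis to the pure sheaf $\cF|_{\Delta'}$ to obtain $\cF|_{\Delta'} \cong \bigoplus_{\sigma \in \Delta'} K_\sigma \otimes_\R \cL_\sigma^{\Delta'}$. Each $\cL_\sigma^{\Delta'}$ extends to a simple sheaf $\cL_\sigma$ on $\Delta$ by declaring $\cL_\sigma(\mu)$ to be a minimal graded free $A_\mu$-lift of $\cL_\sigma(\partial\mu)$, built by lifting a homogeneous basis of $\ov{\cL_\sigma(\partial\mu)}$ and invoking graded Nakayama to enforce the local minimal extension property. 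The sheaf $\cG := \bigoplus_{\sigma \in \Delta'} K_\sigma \otimes_\R \cL_\sigma$ is then pure on $\Delta$ with $\cG|_{\Delta'} \cong \cF|_{\Delta'}$, and I would promote this restriction-isomorphism to a morphism $\iota \colon \cG \to \cF$ on all of $\Delta$ by choosing a homogeneous $A_\mu$-basis $\{g_i\}$ of $\cG(\mu)$, using flabbiness of $\cF$ to lift each $\iota(\res(g_i)) \in \cF(\partial\mu)$ to some $\tilde g_i \in \cF(\mu)$, and setting $\iota(g_i) := \tilde g_i$. Since $\ov{\cG(\mu)} \to \ov{\cG(\partial\mu)}$ is an isomorphism by local minimal extension, $\ov{\iota}_\mu$ is then injective with image a graded complement to $K_\mu$ inside $\ov{\cF(\mu)}$.

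To conclude, I would lift a homogeneous basis of $K_\mu$ to elements $\{\tilde k_j\} \subseteq \cF(\mu)$ and form their $A_\mu$-span $E$. The combined set $\{\iota(g_i)\} \cup \{\tilde k_j\}$ maps to a basis of $\ov{\cF(\mu)}$, so by graded Nakayama inside the free module $\cF(\mu)$ it is in fact a free $A_\mu$-basis, yielding $\cF(\mu) = \iota(\cG(\mu)) \oplus E$ with $E \cong K_\mu \otimes_\R A_\mu$. Since $\mu$ is maximal, the normalization axiom in Definition~\ref{defn:SimpleSheaves} forces $\cL_\mu(\tau) = 0$ for all $\tau \neq \mu$, so $K_\mu \otimes_\R \cL_\mu$ is nothing but $E$ extended by zero. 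Combining this with $\iota(\cG) \cong \cG$ gives the desired decomposition $\cF \simeq \cG \oplus (K_\mu \otimes_\R \cL_\mu) \simeq \bigoplus_{\sigma \in \Delta} K_\sigma \otimes_\R \cL_\sigma$. The main obstacle is the bookkeeping around the lift $\iota$: one needs flabbiness of $\cF$ to carry out the compatible graded lifts at $\mu$, and then must verify via graded Nakayama that the chosen elements really generate $\cF(\mu)$ freely with the correct Hilbert series; once this is secured the induction closes cleanly.
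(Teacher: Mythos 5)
Your induction on the number of cones, peeling off a maximal cone $\mu$, is sound and essentially reproduces the standard argument behind the cited references \cite[Theorem~2.4]{BBFK02}, \cite[Theorem~5.3]{BL03} (the paper itself does not reprove this theorem). The construction of the morphism $\iota\colon \cG \to \cF$ via flabbiness, the identification $\ov{\cF(\mu)} = \im(\ov{\iota}_\mu) \oplus K_\mu$, and the Nakayama/Hilbert-series argument to upgrade a generating set to a free basis are all correct.

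There is, however, one genuine gap. You ``lift a homogeneous basis of $K_\mu$ to elements $\{\tilde k_j\}\subseteq \cF(\mu)$'' and then assert that $E = \operatorname{span}_{A_\mu}\{\tilde k_j\}$, extended by zero, is $K_\mu\otimes_\R \cL_\mu$ as a \emph{sheaf}. Since $\mu$ is maximal and $\cL_\mu$ is a skyscraper, the summand $K_\mu\otimes_\R \cL_\mu$ has zero sections over $\partial\mu$, so for the asserted sheaf isomorphism to hold you need $\res^\cF_{\mu,\tau}(\tilde k_j) = 0$ for every $\tau<\mu$, i.e.\ $\tilde k_j\in \cF(\mu,\partial\mu) := \ker(\res\colon \cF(\mu)\to\cF(\partial\mu))$. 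An arbitrary homogeneous lift of $k_j\in K_\mu=\ker(\ov{\res})$ only satisfies $\res(\tilde k_j)\in \mmax\,\cF(\partial\mu)$, which is weaker; without the stronger condition you only get an $A_\mu$-module splitting of $\cF(\mu)$, not a splitting of $\cF$. The fix is to show $\rho(\cF(\mu,\partial\mu)) = K_\mu$, which follows from flabbiness by a short correction step: given a naive lift $\tilde k_j$, write $\res(\tilde k_j)=\sum_i a_i f_i$ with $a_i\in\mmax$ and $f_i\in\cF(\partial\mu)$, lift each $f_i$ to $\hat f_i\in\cF(\mu)$, and replace $\tilde k_j$ by $\tilde k_j - \sum_i a_i \hat f_i$. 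This lies in $\cF(\mu,\partial\mu)$ and has the same image $k_j$ in $\ov{\cF(\mu)}$, since the subtracted term lies in $\mmax\,\cF(\mu)$. With this adjustment your induction closes correctly.
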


In general, 
the isomorphism in the decomposition is non-unique;
see \cite[Section~4.1]{Kar19} for a discussion. 
Alternatively we can write $\cF \simeq \bigoplus_\alpha \cL_{\sigma_\alpha}[-j_\alpha]$,
where each $\sigma_\alpha$ is a possibly repeated cone in $\Delta$ and each $j_\alpha$ is a possibly repeated integer.
We define the \emph{local Poincar\'e polynomial} of $\cF$ at $\sigma$ to be
\begin{equation*}\label{eq:localPoincar\'epoly}
L(\cF, \sigma; t) := P(K_\sigma; t) = \sum_{
\substack{\alpha\\ \sigma_\alpha = \sigma}
} t^{j_\alpha}.
\end{equation*}
Thus the theorem implies
\begin{equation}\label{eq:Poincar\'erecursion}
P(\ov{\cF(\Delta)}; t) = \sum_{\sigma\in \Delta} L(\cF, \sigma; t) \cdot P(\ov{\cL_\sigma(\Delta)}; t).
\end{equation}

\subsection{Pure sheaves on a quasi-convex fan}\label{subsec:pureonquasi}

We begin with a characterization of the quasi-convex fans \cite[Definition~4.1]{BBFK02}; see also \cite[Proposition~6.1]{BBFK99}.
We note that while the following statement is slightly stronger than the original statement, 
they are equivalent due to \cite[Theorem~4.3(c)]{BBFK02}.

\begin{prop}\label{p:quasiconvex}
Let $\Delta$ be a purely $d$-dimensional fan in $\R^d$.
Then $\Delta$ is quasi-convex if and only if 
the $A$-module $\cF(\Delta)$ is free
whenever $\cF$ is a pure sheaf. 
\end{prop}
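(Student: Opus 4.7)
The plan is to handle the two implications separately. The reverse direction ($\Leftarrow$) is immediate: applying the hypothesis to $\cF = \cL_\Delta$ gives that $\cL_\Delta(\Delta)$ is a free $A$-module, which is precisely the classical BBFK characterization of quasi-convexity (\cite[Theorem~4.3]{BBFK02}, which in the excerpt is attributed the slightly stronger form we are now trying to prove).

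For the forward direction ($\Rightarrow$), the starting point is the decomposition theorem (Theorem~\ref{t:decomptheorem}): any pure sheaf $\cF$ on $\Delta$ splits as $\bigoplus_{\sigma \in \Delta} K_\sigma \otimes_\R \cL_\sigma$ with each $K_\sigma$ a finite-dimensional graded vector space. Taking global sections yields $\cF(\Delta) \cong \bigoplus_{\sigma} K_\sigma \otimes_\R \cL_\sigma(\Delta)$, so freeness of $\cF(\Delta)$ over $A$ reduces to proving that $\cL_\sigma(\Delta)$ is a free $A$-module for every $\sigma \in \Delta$. For $\sigma = o$, this is the classical criterion directly. For a general cone $\sigma$, I would identify $\cL_\sigma$ (on the star of $\sigma$) with the pullback of $\cL_{\lk_\Delta \sigma}$ along the quotient $p \colon \R^d \to \R^d / \Span(\sigma)$, tensored with $A_\sigma$, and extended by zero outside the star. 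Concretely, after choosing a linear complement of $\Span(\sigma)$ in $\R^d$ one has $A \cong A_\sigma \otimes_\R A'$ where $A'$ is the polynomial ring on $\R^d/\Span(\sigma)$, and the identification yields
\[
\cL_\sigma(\Delta) \,\cong\, A_\sigma \otimes_\R \cL_{\lk_\Delta \sigma}(\lk_\Delta \sigma).
\]
Granted this, Proposition~\ref{p:linkofqcisqc} guarantees that $\lk_\Delta \sigma$ is again quasi-convex, so the classical BBFK criterion applied to the link gives freeness of $\cL_{\lk_\Delta \sigma}(\lk_\Delta \sigma)$ over $A'$, and then $\cL_\sigma(\Delta)$ is free over $A$ by extension of scalars.

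The main obstacle is verifying the sheaf-theoretic identification $\cL_\sigma \cong A_\sigma \otimes_\R p^* \cL_{\lk_\Delta \sigma}$ (extended by zero outside the star of $\sigma$). By the uniqueness of simple sheaves (\cite[Proposition~1.3]{BBFK02}), it suffices to check that the candidate sheaf satisfies normalization, local freeness, and local minimal extension on every cone of $\Delta$. Normalization and local freeness are routine from the construction. Local minimal extension on cones $\tau$ with $\sigma \leq \tau$ transfers directly from the corresponding property of $\cL_{\lk_\Delta \sigma}$ on the image $\bar{\tau}$ in the link. The subtle case is cones $\tau$ with $\sigma \not\leq \tau$: here one argues inductively on $\dim(\tau)$ that the candidate sheaf vanishes. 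The key observation is that $\sigma \not\leq \tau$ together with transitivity of the face relation forces $\sigma \not\leq \tau'$ for every $\tau' < \tau$, so the inductive hypothesis yields $\cL_\sigma(\partial \tau) = 0$; local freeness combined with $\overline{\cL_\sigma(\tau)} \cong \overline{\cL_\sigma(\partial \tau)} = 0$ then forces $\cL_\sigma(\tau) = 0$, as required.
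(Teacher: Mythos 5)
Your proof is correct; the paper itself does not supply an argument for this proposition but instead defers to \cite[Definition~4.1, Theorem~4.3(c)]{BBFK02}, noting only that the version for arbitrary pure sheaves (rather than just $\cL_\Delta$) follows from that theorem. Your write-up reconstructs the missing implication. The reduction via the decomposition theorem to freeness of each $\cL_\sigma(\Delta)$, followed by the identification $\cL_\sigma \cong A_\sigma \otimes_\R i_*\cL_{\lk_\Delta\sigma}$ (the pushforward from the link along $i\colon \lk_\Delta\sigma \to \Delta$, which is what your ``pullback along $p$ extended by zero'' amounts to), is exactly the right move; this identification is in fact proved later in the paper as Lemma~\ref{lem:SimpleSheafAndMinimalExtensionSheaf}, for the unrelated purpose of computing Poincar\'e polynomials, so your argument anticipates that lemma. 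Combined with Proposition~\ref{p:linkofqcisqc} and the $\sigma = o$ base case from BBFK, freeness of $\cL_\sigma(\Delta)$ over $A \cong A_\sigma \otimes_\R A_{\lk_\Delta\sigma}$ follows by extension of scalars, and the reverse direction by specializing to $\cF = \cL_\Delta$ is immediate. One small phrasing remark: in your ``subtle case'' $\sigma \not\leq \tau$, the candidate sheaf vanishes at $\tau$ by construction (extension by zero), not as something that has to be deduced inductively; the point to observe, which you do, is that transitivity of the face relation forces every proper face of $\tau$ to also miss $\sigma$, so the local minimal extension axiom there is the trivial isomorphism $0 \to 0$. This is a presentational wrinkle, not a gap.
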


In particular,
the proposition implies that
the direct image sheaf of a pure sheaf $\cF$ under a fan subdivision is again a pure sheaf:
Let $\pi\colon \Sigma\to \Delta$ be a fan subdivision
and $\cF$ be a pure sheaf on $\Sigma$.
Then the direct image sheaf $\pi_* \cF$ is flabby,
since $\cF$ is. 
Now, the preimage fan $\pi^{-1}(\langle \sigma \rangle)$ has support $\sigma$, hence it is a 
quasi-convex fan in $\Span(\sigma)$.
By the proposition, the module $\pi_*\cF(\sigma) = \cF(\pi^{-1}(\langle \sigma \rangle))$ is a free $A_\sigma$-module, 
hence $\pi_*\cF$ is a pure sheaf on $\Delta$.

We turn our attention to the minimal extension sheaf $\cL_\Delta$ of a quasi-convex fan $\Delta$ in $\R^d$.
By the previous proposition, the module $\cL_\Delta(\Delta)$ is a free $A$-module. In addition, the module $\cL_\Delta(\Delta, \partial \Delta) = \ker(\res\colon \cL_\Delta(\Delta) \to \cL_\Delta(\partial \Delta))$ is also a free $A$-module \cite[Corollary~4.12]{BBFK02}, and the duality theory developed in \cite[Section~6]{BBFK02} implies that they are dual to each other in the following sense:
\[
\cL_\Delta(\Delta) =  \Hom_A(\cL_\Delta(\Delta, \partial \Delta), A[-d]).
\]
See also \cite{BBFK05, BL05} for a canonical treatment of the duality theory.

\begin{thm}[Hard Lefschetz {\cite[Theorem~0.1]{Kar04}}]\label{t:hardlef}
Let $\Delta$ be a projective fan in $\R^d$ and $\ell$ be a conewise linear strictly convex function on $\Delta$.
Then multiplication of $\ell^{d-2k}$
induces an isomorphism of vector spaces for each integer $k\leq d/2$:
\[
\begin{tikzcd}
\ell^{d-2k} \colon \ov{\cL_\Delta(\Delta)}^{k}  \arrow{r}{\sim} &  \ov{\cL_\Delta(\Delta)}^{ d-k}.
\end{tikzcd}
\]
\end{thm}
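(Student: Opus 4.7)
The plan is to follow Karu's induction strategy from \cite{Kar04}, proving the Hard Lefschetz property (HL) simultaneously with the Hodge--Riemann bilinear relations (HR) by induction on $d$. Set $V := \ov{\cL_\Delta(\Delta)}$. Since $\Delta$ is projective it is complete, so $\partial\Delta = \emptyset$ and the duality theory mentioned at the end of Subsection~\ref{subsec:pureonquasi} supplies a perfect graded pairing $\langle -, -\rangle \colon V^k \times V^{d-k} \to \R$ of degree $d$. In particular $\dim_\R V^k = \dim_\R V^{d-k}$, so the theorem reduces to showing that $\ell^{d-2k}\colon V^k \to V^{d-k}$ is injective for every $k \leq d/2$.

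The companion statement (HR) asserts that on the primitive subspace $P^k := \ker(\ell^{d-2k+1}\colon V^k \to V^{d-k+1})$, the symmetric form $Q_k(x,y) := (-1)^k \langle x, \ell^{d-2k} y\rangle$ is positive definite. A standard linear algebra argument shows that (HR) at every degree $k \leq d/2$ combined with Poincar\'e duality implies (HL), and conversely (HL) is needed to keep $Q_k$ nondegenerate during the induction; the two statements must therefore be carried along together. The base cases $d \leq 1$ are trivial.

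The inductive step splits into two sub-steps. First, assuming (HL) and (HR) for every projective fan of dimension $< d$, one deduces (HL) for $\Delta$: for a ray $\rho \in \Delta$, the link $\lk_\Delta \rho$ is quasi-convex by Proposition~\ref{p:linkofqcisqc} and projective with respect to the conewise linear function $\bar\ell$ descended from $\ell$. Applying the decomposition theorem (Theorem~\ref{t:decomptheorem}) to $\cL_\Delta$ near $\rho$ relates $V$ to $\ov{\cL_{\lk_\Delta \rho}(\lk_\Delta\rho)}$ compatibly with multiplication by $\ell$; a putative nonzero kernel class of $\ell^{d-2k}$ on $V$ would then descend to a primitive class of strictly positive length on some link, contradicting the inductive (HR). Second, one upgrades (HL) to (HR) on $\Delta$ using the cone $\cK$ of strictly convex conewise linear functions on $\Delta$, which is open and connected in a finite-dimensional real vector space. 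The signature of $Q_k$ varies continuously in $\ell \in \cK$, and (HL) on $\Delta$ forces $Q_k$ to remain nondegenerate throughout $\cK$; hence the signature is constant on $\cK$ and it suffices to verify (HR) at a single strictly convex $\ell$.

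The main obstacle is this last verification. One selects $\ell$ approaching the boundary of $\cK$, where it degenerates to a pullback of a strictly convex function on a projective fan of dimension $< d$, so that the primitive decomposition of $V$ splits into a piece inherited from the lower-dimensional fan (whose signature is given by inductive (HR)) and a ``new'' piece supported on the degenerating rays whose signature must be computed from scratch. Tracking the form $Q_k$ cleanly through this degeneration, and identifying the boundary limit of $V$ as data pulled back from a lower-dimensional projective fan via the decomposition theorem, is the technical heart of the argument and where essentially all the work in \cite{Kar04} lies.
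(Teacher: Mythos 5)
This theorem is quoted from \cite[Theorem~0.1]{Kar04} and is not proved in the paper, so there is no in-paper proof to compare against. Your sketch correctly captures the broad strategy of Karu's argument (itself an extension of McMullen's proof for simple polytopes): a simultaneous induction on the Hard Lefschetz and Hodge--Riemann properties, Poincar\'e duality to reduce Hard Lefschetz to injectivity, inductive Hodge--Riemann on links to exclude kernel classes, and a continuity-of-signature argument over the cone of strictly convex conewise linear functions.

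Two specific points in the sketch are inaccurate. First, invoking the decomposition theorem (Theorem~\ref{t:decomptheorem}) ``near $\rho$'' is not the right mechanism: $\cL_\Delta$ is already a simple sheaf and the decomposition theorem says nothing nontrivial about it; the connection between $\ov{\cL_\Delta(\Delta)}$ and $\ov{\cL_{\lk_\Delta\rho}(\lk_\Delta\rho)}$ comes from the restriction-to-star module $\cL_\Delta(\langle\rho\rangle)$, the partition-of-unity structure, and the duality pairing, not from a direct-sum decomposition of pure sheaves. Second, when $\ell$ degenerates to the boundary of $\cK$ it becomes a pullback of a strictly convex function from a \emph{coarser} complete fan in the same $\R^d$, not from a fan of dimension less than $d$; and in the non-simplicial case Karu in fact passes first to a simplicial projective subdivision $\Sigma\to\Delta$ and derives the statement for $\Delta$ from the simplicial case together with the decomposition theorem applied to the pushforward. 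You have correctly flagged that the entire technical content lies in these steps, but as written the sketch defers precisely the work that a proof would need, and misattributes the degeneration target; since the theorem is cited rather than proved in this paper, that deferral is acceptable for the paper but means your proposal does not itself establish the result.
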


Given a cone $\tau$, the boundary fan $\partial \tau$ is isomorphic to a projective fan.
Thus the hard Lefschetz theorem implies the following degree vanishing condition for any simple sheaf $\cL_\sigma$, as stated in \cite[Theorem~2.6]{Bra06}.

\begin{cor}\label{c:bradenstalk}
Let $\Delta$ be a fan.
Let $\cL_\sigma$ be the simple sheaf based at $\sigma \in \Delta$ and $\tau > \sigma$ be a cone in $\Delta$.
Then we have the following:
\begin{enumerate}
\item The module $\cL_\sigma(\tau)$ is generated in degrees strictly less than $(\dim(\tau) - \dim(\sigma)) / 2$.

\item The module $\cL_\sigma(\tau, \partial \tau) = \ker(\res\colon \cL_\sigma(\tau)\to \cL_\sigma(\partial \tau))$ is generated in degrees strictly greater than $(\dim(\tau) - \dim(\sigma)) / 2$.
\end{enumerate}
\end{cor}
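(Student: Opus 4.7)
The plan is to prove (1) by reducing, via the link and a projection, to the hard Lefschetz theorem on a complete projective fan, and then to deduce (2) from (1) by Poincar\'e duality.

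First I would reduce to the case $\sigma = o$. The simple sheaf $\cL_\sigma$ restricted to the star of $\sigma$ corresponds, via the quotient $\R^d \to \R^d/\Span(\sigma)$, to the minimal extension sheaf $\cL_o$ on the link $\lk_\Delta \sigma$, and $\tau$ corresponds to a cone $\ov{\tau}\in \lk_\Delta \sigma$ of dimension $n := \dim\tau-\dim\sigma$. Up to base change by $A_\sigma$, the module $\cL_\sigma(\tau)$ becomes $\cL_o(\ov{\tau})$, with matching generating degrees. So it suffices to show, for any cone $\tau$ of dimension $n$ with $o<\tau$, that $\cL_o(\tau)$ is generated in degrees strictly less than $n/2$ and $\cL_o(\tau,\partial\tau)$ in degrees strictly greater than $n/2$.

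By the local minimal extension property of $\cL_o$, the restriction map induces an isomorphism $\ov{\cL_o(\tau)}\cong \ov{\cL_o(\partial\tau)}$, so it suffices to bound the degrees of $\ov{\cL_o(\partial\tau)}$. Pick a vector $v$ in the interior of $\tau$ and a linear form $\omega$ positive on $\tau\setminus\{0\}$, and let $\pi\colon \R^n\to \R^{n-1}$ be the projection along $\R v$. Since $v$ is interior to the pointed cone $\tau$, the map $\pi$ is bijective on $|\partial\tau|$ and carries $\partial\tau$ to a combinatorially isomorphic complete projective $(n-1)$-dimensional fan $\Sigma$ in $\R^{n-1}$, namely (a translate of) the face fan of the cross-section polytope $P=\tau\cap\omega^{-1}(1)$. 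Because $\pi$ restricts to a linear isomorphism on $\Span(\gamma)$ for every $\gamma\in \partial\tau$, the sheaves $\cL_o|_{\partial\tau}$ and $\cL_\Sigma$ are identified through the uniqueness characterization in Definition~\ref{defn:SimpleSheaves}, and the extra variable in $A=\R[x_1,\dots,x_n]$ not present in $A'=\R[x_1,\dots,x_{n-1}]$ (the one corresponding to the direction $v$) acts on $\cL_o(\partial\tau)$ as multiplication by a strictly convex conewise linear function $\ell$ on $\Sigma$. The strict convexity here follows because, under the identification, $\omega$ pulls back to the function ``height of the lift from $\R^{n-1}$ to $|\partial\tau|$'', which is strictly convex precisely because $\tau$ is a pointed convex cone containing $v$ in its interior. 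Hence $\ov{\cL_o(\partial\tau)}\cong \ov{\cL_\Sigma(\Sigma)}/(\ell)$.

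Applying Theorem~\ref{t:hardlef} to the projective fan $\Sigma$ with the strictly convex $\ell$, multiplication by $\ell^{n-1-2k}$ is an isomorphism $\ov{\cL_\Sigma(\Sigma)}^k \xrightarrow{\sim} \ov{\cL_\Sigma(\Sigma)}^{n-1-k}$ for every $k\leq (n-1)/2$. A standard $\mathfrak{sl}_2$-argument then shows that $\cdot\ell\colon \ov{\cL_\Sigma(\Sigma)}^k\to \ov{\cL_\Sigma(\Sigma)}^{k+1}$ is surjective whenever $k\geq (n-2)/2$, so the cokernel $\ov{\cL_\Sigma(\Sigma)}/(\ell)$ vanishes in degrees $\geq n/2$, proving (1). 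For (2), the Poincar\'e duality of \cite[Section~6]{BBFK02} applied to the quasi-convex fan $\langle\tau\rangle$ gives $\cL_o(\tau)\cong \Hom_{A_\tau}(\cL_o(\tau,\partial\tau), A_\tau[-n])$ as free graded $A_\tau$-modules; this swaps the generating degrees via $d\leftrightarrow n-d$, so (1) immediately implies (2).

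The main obstacle is the identification $\ov{\cL_o(\partial\tau)}\cong \ov{\cL_\Sigma(\Sigma)}/(\ell)$ together with the verification that $\ell$ is strictly convex: the combinatorial identification of the two minimal extension sheaves is routine, but transferring the $A$-module structure across the projection $\pi$ and pinning down strict (rather than mere) convexity requires carefully using the convexity and pointedness of $\tau$ and the interiority of $v$.
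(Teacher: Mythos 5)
Your proof is correct and takes essentially the same approach that the paper indicates: the paper's justification is the one-line remark that $\partial\tau$ is isomorphic to a projective fan and cites Braden [Bra06, Theorem~2.6], and your write-up is a detailed reconstruction of exactly that argument — reduce to $\sigma=o$ via the link, project $\partial\tau$ to a projective fan with the extra coordinate acting as a strictly convex $\ell$, apply hard Lefschetz to bound the degrees of $\ov{\cL_\Sigma(\Sigma)}/(\ell)$, and dualize for part (2).
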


In particular, the Poincar\'e polynomial $P(\ov{\cL_\Delta(\tau)}; t)$ is of degree strictly less $\dim(\tau)/2$ whenever $\dim(\tau) > 0$.

\begin{thm}[Relative hard Lefschetz {\cite[Theorem~1.1]{Kar19}}]\label{t:relhardlef}
Let $\pi\colon \Sigma \to \Delta$ be a projective fan subdivision between fans and
$\hat{\ell}$ be a relatively strictly convex
function with respect to $\pi$.
Let $\pi_*\cL_\Sigma = \oplus_{\sigma\in \Delta} K_\sigma\otimes_\R \cL_\sigma$ be a decomposition of the pure sheaf $\pi_*\cL_\Sigma$ on $\Delta$.
Then multiplication of $\hat{\ell}^{\dim(\sigma)-2k}$
induces an isomorphism of vector spaces for each $k\leq \dim(\sigma)/2$:
\[
\begin{tikzcd}
\hat{\ell}^{\dim(\sigma)-2k} \colon K_\sigma^{ k}  \arrow{r}{\sim} &  K_\sigma^{\dim(\sigma)-k}.
\end{tikzcd}
\]
\end{thm}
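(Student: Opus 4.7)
The plan is to reduce the relative statement to the absolute hard Lefschetz theorem (Theorem~\ref{t:hardlef}) via a local analysis at each cone of $\Delta$. Fix $\sigma \in \Delta$. Combining Theorem~\ref{t:decomptheorem} with the definition of the direct image sheaf yields
\[
K_\sigma = \ker\bigl(\ov{\res}\colon \ov{\cL_\Sigma(\pi^{-1}\langle\sigma\rangle)} \to \ov{\cL_\Sigma(\pi^{-1}\partial\sigma)}\bigr),
\]
so the theorem becomes a cone-by-cone assertion about the pure sheaf $\cL_\Sigma$ restricted to the quasi-convex refinement $\pi^{-1}\langle\sigma\rangle$ of $\langle\sigma\rangle$, with the Lefschetz element provided by the restriction of $\hat{\ell}$.

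Next, I would construct an auxiliary complete projective fan $\wt\Sigma_\sigma$ in $\Span(\sigma)$ that contains $\pi^{-1}\langle\sigma\rangle$ as a subfan and carries a strictly convex conewise linear function $\wt\ell$ agreeing with $\hat{\ell}$ on $\pi^{-1}\langle\sigma\rangle$; one natural choice is to complete $\pi^{-1}\langle\sigma\rangle$ by gluing a chosen refinement of the opposite cone $-\sigma$ and extending $\hat{\ell}$ using the relative strict convexity. Applying Theorem~\ref{t:hardlef} to the pair $(\wt\Sigma_\sigma, \wt\ell)$ produces the Lefschetz isomorphism $\wt\ell^{\dim(\sigma)-2k}$ on $\ov{\cL_{\wt\Sigma_\sigma}(\wt\Sigma_\sigma)}$ together with the accompanying $\mathfrak{sl}_2$-action and primitive decomposition.

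Then I would identify $K_\sigma$ with a primitive, $\wt\ell$-stable subspace of this complete-fan cohomology. The inclusion is controlled by the duality between $\cL_\Sigma(\pi^{-1}\langle\sigma\rangle)$ and $\cL_\Sigma(\pi^{-1}\langle\sigma\rangle, \pi^{-1}\partial\sigma)$ (see \cite{BBFK05,BL05}), together with the degree bound of Corollary~\ref{c:bradenstalk}, which forces elements of $K_\sigma$ to sit in degrees $\leq \dim(\sigma)$ and to match the primitive part generated at the base cone. Restriction of the Lefschetz isomorphism then yields the claimed isomorphism $\hat\ell^{\dim(\sigma)-2k}\colon K_\sigma^{k} \to K_\sigma^{\dim(\sigma)-k}$.

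The main obstacle is this last step: verifying that $K_\sigma$ is actually a $\wt\ell$-stable primitive subspace and that the restricted operator coincides with $\hat\ell^{\dim(\sigma)-2k}$ on it. I expect this to require an induction on $\dim(\sigma)$, using the relative hard Lefschetz for links $\lk_\Delta \sigma'$ with $\sigma'<\sigma$ together with Hodge--Riemann bilinear relations propagated from the absolute case, as in the proof of the non-rational hard Lefschetz theorem \cite{Kar04}. Without these compatibilities, the restricted operator need not preserve the summand $K_\sigma$, which is precisely the technical heart of the argument in \cite{Kar19}.
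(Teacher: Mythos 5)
This theorem is cited verbatim from Karu~\cite[Theorem~1.1]{Kar19} and is not proved in the paper, so there is no in-paper argument to compare your sketch against; what follows is an assessment of the sketch on its own terms. Your first paragraph is correct, and the completion of $\pi^{-1}\langle\sigma\rangle$ to a projective fan $\wt\Sigma_\sigma$ in $\Span(\sigma)$ carrying a strictly convex extension of $\hat\ell$ (after normalizing by a linear function) is a legitimate auxiliary construction.

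The genuine gap sits in your third and fourth steps, and you only partly acknowledge it. Applying Theorem~\ref{t:hardlef} to $\wt\Sigma_\sigma$ yields hard Lefschetz for $\ov{\cL_{\wt\Sigma_\sigma}(\wt\Sigma_\sigma)}$, but there is no natural Lefschetz-equivariant inclusion of $K_\sigma$ into that space: the flabby restriction $\ov{\cL_{\wt\Sigma_\sigma}(\wt\Sigma_\sigma)} \twoheadrightarrow \ov{\cL_{\wt\Sigma_\sigma}(\pi^{-1}\langle\sigma\rangle)}$ points the wrong way, and Corollary~\ref{c:bradenstalk} bounds stalks and costalks of a fixed simple sheaf, not the summand $K_\sigma$ of the pushforward. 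More fundamentally, the decomposition $\pi_*\cL_\Sigma \simeq \oplus_\sigma K_\sigma\otimes\cL_\sigma$ is non-canonical (the paper flags this right after Theorem~\ref{t:decomptheorem}), and multiplication by $\hat\ell$ has no reason to preserve an arbitrary choice of splitting; asserting that ``$K_\sigma$ is an $\hat\ell$-stable primitive subspace'' is a restatement of the theorem, not a step toward it. Karu's proof does not embed $K_\sigma$ into the cohomology of a completion: it runs a simultaneous induction on hard Lefschetz and the Hodge--Riemann bilinear relations for the pushforward itself, in the style of de Cataldo--Migliorini, and it is precisely the definiteness from Hodge--Riemann on the primitive part in degrees $< \dim(\sigma)/2$ that forces a compatible splitting to exist. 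Your last paragraph names these ingredients but treats them as a patch for one step rather than as the engine that drives the whole argument, so as written the sketch does not constitute a proof.
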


In particular, the coefficients of the local Poincar\'e polynomial $L(\pi_* \cL_\Sigma, \sigma; t) = P(K_\sigma; t)$ are 
symmetric about degree $\dim(\sigma)/2$
and are unimodal.

\section{Weight sheaves}\label{sec:weightsheaves}

In this section we define the weight sheaves $\cW^r \cF$ of a pure sheaf $\cF$,
motivated by the definition of the weight filtration in mixed Hodge theory (see for example \cite[Theorem~4.2]{PS08}).

\begin{defn}
Let $\cF$ be a pure sheaf on a fan $\Delta$.
For every integer $r\in \Z$, we define an $\cA$-module subsheaf $\cW^r \cF$ of $\cF$, called the \emph{$r$-weight sheaf} of $\cF$ as follows:
for each cone $\sigma$, we set
\[
\cW^r\cF(\sigma) = \begin{cases}
\cF(\sigma) & \text{if } r\leq 0, \\
\res_\cF^{-1}(\cW^{r} \cF(\partial \sigma)) & \text{if } 0 < r \leq \dim (\sigma), \\
\mmax \cW^{r-2}\cF(\sigma)  & \text{if } r> \dim (\sigma).\\
\end{cases}
\]
\end{defn}

In particular, on an open set $U$, we have
\[
\cW^r\cF(U) \simeq \{ (f_\sigma)_{\sigma \in U} :
f_\sigma\in \cW^r\cF(\sigma) ,
\res_{\sigma, \tau}(f_{\sigma}) = f_\tau \text{ for all } \tau \leq \sigma
\}.
\]
We prove that the weight sheaves are indeed well-defined subsheaves of $\cF$ in Proposition~\ref{p:weightissheaf}.
For now, 
we consider each weight sheaf an assignment to 
each open set $U$ 
of 
an $A(U)$-submodule of $\cF(U)$.

\begin{prop}\label{p:madds2}
For every open set $U$ and every integer $r$, we have $\mmax \cW^{r-2}\cF(U) \subseteq \cW^{r}\cF(U)$.
\end{prop}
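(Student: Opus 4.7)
The plan is to reduce the statement over a general open set $U$ to the case of a single cone, and then prove the cone-wise inclusion by induction on $\dim \sigma$, treating all $r \in \Z$ simultaneously via case analysis on the three branches in the definition of $\cW^r \cF(\sigma)$.

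For the reduction step, at this stage before Proposition~\ref{p:weightissheaf} is available, I would interpret $\cW^r \cF(U)$ as the submodule
\[
\{ f \in \cF(U) : \res_{U, \sigma}(f) \in \cW^r \cF(\sigma) \text{ for every } \sigma \in U\}.
\]
Then for any $g = \sum_i m_i f_i$ with $m_i \in \mmax$ and $f_i \in \cW^{r-2}\cF(U)$, and any $\sigma \in U$, the $A$-linearity of the restriction map gives $\res_{U,\sigma}(g) = \sum_i m_i \res_{U,\sigma}(f_i) \in \mmax \cW^{r-2}\cF(\sigma)$. So it suffices to establish $\mmax \cW^{r-2}\cF(\sigma) \subseteq \cW^r \cF(\sigma)$ for every cone $\sigma$. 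For the induction on $\dim \sigma$, the case $r \leq 0$ is trivial since $\cW^r \cF(\sigma) = \cF(\sigma)$, and the case $r > \dim \sigma$ is an equality by the third branch of the definition. The nontrivial case is $0 < r \leq \dim \sigma$, where $\cW^r\cF(\sigma) = \res_\cF^{-1}(\cW^r\cF(\partial \sigma))$. Take $f \in \cW^{r-2}\cF(\sigma)$ and $m \in \mmax$; since either $r-2 \leq 0$ (so $\cW^{r-2}\cF(\sigma) = \cF(\sigma)$) or $0 < r-2 \leq \dim \sigma$ (so $\cW^{r-2}\cF(\sigma) = \res^{-1}(\cW^{r-2}\cF(\partial \sigma))$), the element $\res_{\sigma, \partial \sigma}(f)$ lies in $\cW^{r-2}\cF(\partial \sigma)$ in both subcases. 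Restricting further to an arbitrary $\tau \in \partial \sigma$ yields $\res_{\sigma, \tau}(f) \in \cW^{r-2}\cF(\tau)$, and since $\dim \tau < \dim \sigma$, the inductive hypothesis gives $m \cdot \res_{\sigma, \tau}(f) \in \mmax \cW^{r-2}\cF(\tau) \subseteq \cW^r \cF(\tau)$. This confirms $m \cdot \res_{\sigma, \partial \sigma}(f) \in \cW^r \cF(\partial \sigma)$, so $m f \in \cW^r \cF(\sigma)$, closing the induction.

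The only subtlety, and the main point to handle carefully, is the potentially circular interaction with the subsheaf property that Proposition~\ref{p:weightissheaf} will establish: the case $0 < r \leq \dim \sigma$ needs the fact that $\cW^{r-2}\cF(\sigma)$ restricts into $\cW^{r-2}\cF(\partial \sigma)$. I expect to avoid circularity by noting that in this regime one always has $r - 2 \leq \dim \sigma$, so this compatibility is either vacuous ($r-2 \leq 0$) or is built into the preimage definition of $\cW^{r-2}\cF(\sigma)$, and therefore does not itself require the general subsheaf statement. This is the reason the induction goes through at weight $r-2$ without having to appeal to the very proposition we are proving.
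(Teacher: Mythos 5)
Your proof is correct and follows essentially the same route as the paper: induction on $\dim\sigma$, case analysis on the three branches of the definition of $\cW^r\cF(\sigma)$, with the crucial observation that in the range $0 < r \le \dim\sigma$ the needed compatibility $\res(\cW^{r-2}\cF(\sigma)) \subseteq \cW^{r-2}\cF(\partial\sigma)$ is vacuous when $r-2\le 0$ and is built into the preimage definition otherwise. The only difference is cosmetic: the paper phrases the argument module-wise (computing $\res(\mmax\cW^{r-2}\cF(\sigma))$ and taking $\res^{-1}$), whereas you track individual elements and assemble over $\tau\in\partial\sigma$, which also has the small benefit of not invoking surjectivity of $\res$.
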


\begin{proof}
We want to prove
\[
(f_\sigma)_{\sigma\in U} \in \cW^{r-2}\cF(U) \implies
\forall a\in \mmax, \,
(a  f_\sigma)_{\sigma\in U} \in \cW^{r}\cF(U).
\]
It suffices to prove the statement for $U = \langle \sigma \rangle$ whenever $\sigma$ is a cone, which we do by induction on $\dim(\sigma)$.
The statement is clear for the zero cone $o$.
By induction we may assume the statement for $U = \partial \sigma$.

The case when $r\leq 0$ or $r> \dim(\sigma)$ is clear.
Suppose $0 < r \leq \dim(\sigma)$.
Then 
\[
\res(\mmax \cW^{r-2}\cF(\sigma))
=
\mmax \res(\cW^{r-2}\cF(\sigma))
=
\begin{cases}
\mmax \res(\cF(\sigma))
& \text{if } r = 1, 2\\
\mmax \res(\res^{-1}(\cW^{r-2}\cF(\partial \sigma)))
& \text{otherwise.}
\end{cases}
\]
Since $\res$ is surjective,
we have $\res(\cF(\sigma)) = \cF(\partial \sigma) = \cW^{r-2} \cF(\partial \sigma)$ when $r = 1, 2$.
Hence for $0 < r \leq \dim(\sigma)$
we have
\[
\res(\mmax \cW^{r-2}\cF(\sigma))
=
\mmax \cW^{r-2}\cF(\partial \sigma).
\]
By the induction hypothesis, we also have $\mmax \cW^{r-2} \cF(\partial \sigma) \subseteq \cW^{r}\cF(\partial \sigma)$.
Now, 
by taking the preimage,
we have 
\begin{align*}
\mmax \cW^{r-2}\cF(\sigma)
& \subseteq 
\res^{-1}
\big(
\res
(
\mmax \cW^{r-2}\cF(\sigma)
)
\big) \\
& \subseteq 
\res^{-1}
\big(
\mmax
 \cW^{r-2}\cF(\partial \sigma)
\big) \\
& \subseteq 
\res^{-1}
\big(
\cW^r \cF(\partial \sigma)
\big) \\
& =
\cW^r \cF(\sigma).
\qedhere
\end{align*}
\end{proof}

We check the well-definedness of the weight sheaves.

\begin{prop}\label{p:weightissheaf}
The assignment $\cW^r\cF$ together with the restriction maps of $\cF$ form a sheaf of $\Z_{\geq 0}$-graded $\cA$-modules.
\end{prop}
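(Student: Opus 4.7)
The plan is to reduce the sheaf axioms to two local statements about the stalks and then verify them by induction. Concretely, I aim to prove by simultaneous induction that for every cone $\sigma \in \Delta$, every integer $r$, and every face $\tau \leq \sigma$:
\begin{enumerate}
\item[(i)] $\cW^r\cF(\sigma)$ is a $\Z_{\geq 0}$-graded $A_\sigma$-submodule of $\cF(\sigma)$; and
\item[(ii)] the restriction $\res_{\sigma,\tau}$ carries $\cW^r\cF(\sigma)$ into $\cW^r\cF(\tau)$.
\end{enumerate}
Once these hold on the basis $\{\langle \sigma\rangle\}$, the description of $\cW^r\cF(U)$ on a general open set as the module of compatible tuples automatically inherits a graded $\cA(U)$-submodule structure from $\cF(U)$, and the sheaf gluing axioms are immediate.

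The induction is outer on $\dim(\sigma)$, with a secondary induction on $r$ used only in the range $r > \dim(\sigma)$. The case $r \leq 0$ is trivial since $\cW^r\cF(\sigma) = \cF(\sigma)$. For $0 < r \leq \dim(\sigma)$, the outer hypothesis promotes $\cW^r\cF(\partial \sigma)$ to a $\Z_{\geq 0}$-graded $A$-submodule of $\cF(\partial \sigma)$, and (i) follows because $\cW^r\cF(\sigma)$ is the preimage of this submodule under the graded $A$-module map $\res_\cF \colon \cF(\sigma)\to\cF(\partial\sigma)$. For (ii) with $\tau<\sigma$, the factorization $\res_{\sigma,\tau}=\res_{\partial\sigma,\tau}\circ \res_{\sigma,\partial\sigma}$ places $\res_{\sigma,\partial\sigma}(\cW^r\cF(\sigma))$ inside $\cW^r\cF(\partial\sigma)$ by definition, and the outer hypothesis carries it on into $\cW^r\cF(\tau)$. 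For $r > \dim(\sigma)$, (i) holds because $\mmax$ is a graded ideal and $\cW^{r-2}\cF(\sigma)$ is a graded submodule by the inner hypothesis. For (ii) with $\tau<\sigma$, we have $r > \dim(\sigma) > \dim(\tau)$, so
\[
\res_{\sigma,\tau}(\mmax\,\cW^{r-2}\cF(\sigma)) = \mmax\,\res_{\sigma,\tau}(\cW^{r-2}\cF(\sigma)) \subseteq \mmax\,\cW^{r-2}\cF(\tau) \subseteq \cW^r\cF(\tau),
\]
where the middle inclusion uses the inner hypothesis and the last is Proposition~\ref{p:madds2}. The case $\tau=\sigma$ is trivial.

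The proof is essentially a formal check; the substantive ingredient has already been isolated in Proposition~\ref{p:madds2}, which is exactly what is needed to close the $r > \dim(\sigma)$ case of (ii). The only real subtlety is aligning the outer induction on $\dim(\sigma)$ and the inner induction on $r$ with the three branches of the recursive definition, but once this bookkeeping is set up every step goes through mechanically.
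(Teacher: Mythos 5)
Your proof is correct and takes essentially the same route as the paper: the substantive step is Proposition~\ref{p:madds2}, which closes the $r > \dim(\sigma)$ case of the restriction inclusion. The paper is slightly leaner — it runs a single induction on $r$ (uniform over all cones, since the case $r\le\dim(\sigma)$ is immediate from the definition and the case $r>\dim(\sigma)$ only calls on the hypothesis at $r-2$) and takes the $A$-submodule property for granted, whereas you run a double induction on $(\dim(\sigma),r)$ and verify the submodule property explicitly; the extra bookkeeping is harmless but not needed.
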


\begin{proof}
It remains to show that the restriction map $\res_\cF\colon \cF(\sigma) \to \cF(\partial \sigma)$ satisfies
\[
\res_\cF(\cW^r\cF(\sigma)) \subseteq \cW^r\cF(\partial \sigma)
\]
for every cone $\sigma$ and every integer $r$,
which we prove by induction on $r$.
This is clear if $r\leq \dim(\sigma)$.
For $r>\dim (\sigma)$, we have
\begin{align*}
\res_{\cF}(\mmax \cW^{r-2}\cF(\sigma)) =
 \mmax \res_{\cF}(\cW^{r-2}\cF(\sigma))
& \subseteq \mmax\cW^{r-2}\cF(\partial \sigma)\\
& \subseteq\cW^r\cF(\partial \sigma),
\end{align*}
where the first inclusion follows from the induction hypothesis and the second inclusion follows from Proposition~\ref{p:madds2}.
\end{proof}

We prove a characterization of the weight sheaves of a shifted simple sheaf.

\begin{prop}\label{p:charofweight}
Let $\cF = \cL_\sigma[-j]$ be a shifted simple sheaf on a fan $\Delta$, where $j\in \Z_{\geq 0}$.
Then for every open set $U$ and every integer $r$ we have
\begin{equation}
\label{eq:charofw}
\cW^r\cF(U) =
\cF(U)^{\geq j+\frac{r - \dim(\sigma)}{2}}.
\end{equation}
\end{prop}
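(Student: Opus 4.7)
The plan is to reduce to the case $U = \langle\tau\rangle$ for a single cone $\tau$, then carry out a nested induction (outer on $\dim(\tau)$, inner on $r$) following the three cases of the definition of $\cW^r$. Write $k := j + (r - \dim(\sigma))/2$. Since a section of either $\cW^r\cF$ or the graded submodule $\cF^{\geq k}$ on an open set $U$ is a compatible collection of cone-wise sections, and the restriction maps of $\cF$ are graded, the cone case will imply the statement for general $U$.

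For the outer base case, if $\dim(\tau) < \dim(\sigma)$ then $\cF(\tau) = \cL_\sigma(\tau)[-j] = 0$ by the normalization axiom of $\cL_\sigma$, so both sides vanish. Assume henceforth $\dim(\tau) \geq \dim(\sigma)$ and that the formula holds on every proper face of $\tau$; in particular $\cW^r\cF(\partial\tau) = \cF(\partial\tau)^{\geq k}$ for every $r$. If $r \leq 0$, then $k \leq j$ and $\cF(\tau)$ is concentrated in degrees $\geq j$, so $\cF(\tau)^{\geq k} = \cF(\tau) = \cW^r\cF(\tau)$. If $0 < r \leq \dim(\tau)$, the inclusion $\cF(\tau)^{\geq k} \subseteq \res^{-1}(\cF(\partial\tau)^{\geq k})$ is immediate; for the reverse, any homogeneous component $f_i$ of degree $i < k$ of a section $f$ with $\res(f) \in \cF(\partial\tau)^{\geq k}$ lies in $\cF(\tau,\partial\tau)^i = \cL_\sigma(\tau,\partial\tau)^{i-j}$, and Corollary~\ref{c:bradenstalk}(2) forces this to vanish since $i - j < (r - \dim(\sigma))/2 \leq (\dim(\tau) - \dim(\sigma))/2$.

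The remaining case $r > \dim(\tau)$ is handled by the inner induction on $r$: given $\cW^{r-2}\cF(\tau) = \cF(\tau)^{\geq k-1}$, I need to show $\mmax \cdot \cF(\tau)^{\geq k-1} = \cF(\tau)^{\geq k}$. The inclusion $\subseteq$ is clear. For $\supseteq$, Corollary~\ref{c:bradenstalk}(1) asserts that $\cL_\sigma(\tau)$ is a free $A_\tau$-module generated in degrees $< (\dim(\tau) - \dim(\sigma))/2$, so $\cF(\tau)$ admits an $A_\tau$-basis in degrees $< j + (\dim(\tau) - \dim(\sigma))/2 < k$. Expanding a homogeneous element $f$ of degree $d \geq k$ in this basis, every coefficient in $A_\tau$ must have strictly positive degree, hence lies in the image of $\mmax$, which exhibits $f$ as an element of $\mmax \cdot \cF(\tau)^{\geq k-1}$.

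The main obstacle is aligning the strict inequalities from Corollary~\ref{c:bradenstalk} with the (possibly half-integer) quantity $(r - \dim(\sigma))/2$, which requires tracking the parities of $r - \dim(\sigma)$ and $\dim(\tau) - \dim(\sigma)$; this is routine but deserves care. Conceptually the two halves of Corollary~\ref{c:bradenstalk} are tailor-made for the induction: part~(1) drives the $\mmax$-multiplication step, and part~(2) drives the pullback step, so the shift $(r - \dim(\sigma))/2$ appearing in the formula is exactly the one dictated by the structure of simple sheaves.
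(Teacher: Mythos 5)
Your proof is correct and follows essentially the same strategy as the paper: reduce to single-cone open sets $\langle\tau\rangle$, perform an outer induction on $\dim(\tau)$ and an inner induction on $r$, and drive the two nontrivial cases by the two halves of Corollary~\ref{c:bradenstalk}. One small caveat worth flagging: the paper treats $\tau = \sigma$ as a separate, explicit base case (computing $\cW^r\cF(\sigma)$ directly as a power of $\mmax$ times $A_\sigma[-j]$), whereas you fold it into the general inductive step and appeal to Corollary~\ref{c:bradenstalk} for the degree bounds. As stated, that corollary only applies to $\tau > \sigma$; for $\tau = \sigma$ the conclusion of the corollary would read ``generated in degrees $<0$,'' which is false. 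The arithmetic still works in your favor at $\tau = \sigma$ --- for $0 < r \le \dim(\sigma)$ the cutoff $k$ satisfies $k \le j$ so the reverse inclusion is vacuous, and for $r > \dim(\sigma)$ the single generator of $A_\sigma[-j]$ sits in degree $j < k$ so the $\mmax$-multiplication step goes through --- but you should make this explicit rather than cite a corollary whose hypothesis is not met.
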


\begin{proof}
We want to prove that for every open set $U$ we have
\[
(f_\tau)_{\tau\in U} \in \cW^r\cF(U) \iff \forall \tau\in U, \,
\text{either } f_\tau = 0  \text{ or }
\deg(f_\tau)\geq j+\frac{r - \dim(\sigma)}{2}.
\]
Hence it suffices to prove \eqref{eq:charofw} for $U = \langle \tau \rangle$ whenever $\tau \geq \sigma$ is a cone, which we do by induction on $\dim(\tau)$.

When $\tau = \sigma$, 
by the definition of the weight sheaves we have
\[
\cW^r\cF(\sigma) = \begin{cases}
A_\sigma[-j] & \text{if } r\leq \dim(\sigma), \\
\mmax^{\ceil{\frac{r-\dim(\sigma)}{2}}} A_\sigma[-j] & \text{if }  r>\dim(\sigma).
\end{cases}
\]
Since $A_\sigma$ is isomorphic to $\R[x_1, \dots, x_{\dim(\sigma)}]$, 
we have 
\[
\mmax^{\ceil{\frac{r-\dim(\sigma)}{2}}} A_\sigma[-j] 
=
A_\sigma[-j]^{\geq j + \ceil{\frac{r-\dim(\sigma)}{2}}
}
=
A_\sigma[-j]^{\geq j + \frac{r-\dim(\sigma)}{2}}.
\]
Hence \eqref{eq:charofw} holds for $U = \langle \sigma\rangle$.

Now, for a cone $\tau > \sigma$,
by induction we may assume \eqref{eq:charofw} for $U = \partial \tau$.
We further induct on $r$,
and prove the base case as follows. 
The case when $r\leq 0$ is clear.
Suppose $0 < r\leq \dim(\tau)$.
Since the restriction map is homogeneous of degree $0$
and 
$\cW^r \cF(\partial \tau) = \cF(\partial \tau)^{\geq j + \frac{r-\dim(\sigma)}{2}}$
by the induction hypothesis, we have
\begin{align*}
\cW^r\cF(\tau) = \res^{-1}(\cW^r\cF(\partial \tau))
& = \cF(\tau, \partial \tau)
+
\cF(\tau)^{\geq j+\frac{r - \dim(\sigma)}{2}}\\
& = 
\cF(\tau)^{\geq j+\frac{r - \dim(\sigma)}{2}},
\end{align*}
where the last equality follows from  Corollary~\ref{c:bradenstalk}.

Suppose $r> \dim (\tau)$.
By Corollary~\ref{c:bradenstalk}, the free $A_\tau$-module
$\cF(\tau)$ is generated in degrees strictly less than
$j + \frac{\dim(\tau)-\dim(\sigma)}{2}$.
Thus we have
\[
\cF(\tau)^{\geq j+\frac{r- \dim(\sigma)}{2}}
\subseteq 
\mmax \cF(\tau)^{\geq j+\frac{r-2 - \dim(\sigma)}{2}}.
\]
By comparing the degrees, one can see that the other inclusion also holds.
By the induction hypothesis of the second induction, we have 
$\cW^{r-2} \cF(\tau)
=
\cF(\tau)^{\geq j+\frac{r-2- \dim(\sigma)}{2}}$. 
Hence 
\begin{align*}
\cW^r\cF(\tau) 
= 
\mmax 
\cW^{r-2}\cF(\tau) 
& =
\mmax \cF(\tau)^{\geq j+\frac{r-2 - \dim(\sigma)}{2}}\\
& =
\cF(\tau)^{\geq j+\frac{r- \dim(\sigma)}{2}}.
\end{align*}
This finishes both inductions. 
\end{proof}

We proceed to prove some properties of the weight sheaves.

\begin{prop}\label{p:WeightSheavesAreDecreasing}
Let $\cF$ be a pure sheaf on a fan $\Delta$.
Then the weight sheaves form a decreasing sequence of subsheaves of $\cF$, i.e.,
for each open set $U$ and $r\in \Z$ 
we have
$\cW^r\cF(U) \supseteq \cW^{r+1}\cF(U)$.
\end{prop}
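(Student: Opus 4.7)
The plan is to verify the stalkwise inclusion $\cW^{r+1}\cF(\sigma) \subseteq \cW^r\cF(\sigma)$ for every cone $\sigma \in \Delta$ and every integer $r$; the inclusion on an arbitrary open set $U$ then follows since both $\cW^r\cF$ and $\cW^{r+1}\cF$ are subsheaves of $\cF$. I would proceed by a double induction: an outer induction on $\dim(\sigma)$, and, for each fixed $\sigma$, a secondary induction on $r$. The cases split along the three regimes in the definition of $\cW^r\cF$, namely $r \leq 0$, $0 < r \leq \dim(\sigma)$, and $r > \dim(\sigma)$.

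Most cases are direct. For $r \leq -1$, both stalks equal $\cF(\sigma)$. For $r = 0$, $\cW^r\cF(\sigma) = \cF(\sigma)$ contains $\cW^{r+1}\cF(\sigma)$ automatically. For $1 \leq r$ with $r+1 \leq \dim(\sigma)$, both stalks are preimages under $\res$ of the analogous stalks on $\partial \sigma$, so the outer inductive hypothesis applied to the proper faces of $\sigma$ transfers the inclusion from $\partial \sigma$ up to $\sigma$. For $r > \dim(\sigma)$, both stalks have the form $\mmax$ times a weight stalk with index shifted down by two, so the secondary induction on $r$ yields the inclusion after multiplication by $\mmax$. The outer base case $\sigma = o$ is covered by these observations, since for $\sigma = o$ every $r \geq 1$ falls into the third regime.

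The main obstacle is the transitional case $r = \dim(\sigma) \geq 1$, where $\cW^{r+1}\cF(\sigma) = \mmax \cW^{r-1}\cF(\sigma)$ lies in the third regime while $\cW^r\cF(\sigma) = \res^{-1}(\cW^r\cF(\partial \sigma))$ lies in the second, so the two are defined by different recipes. To bridge them, I would show that $\res$ sends $\mmax \cW^{r-1}\cF(\sigma)$ into $\cW^r\cF(\partial \sigma)$. Since the restriction map is $A$-linear and $\cW^{r-1}\cF$ is itself a sheaf by Proposition~\ref{p:weightissheaf}, one has $\res(\mmax \cW^{r-1}\cF(\sigma)) = \mmax \cdot \res(\cW^{r-1}\cF(\sigma)) \subseteq \mmax \cW^{r-1}\cF(\partial \sigma)$. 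Proposition~\ref{p:madds2} then gives $\mmax \cW^{r-1}\cF(\partial \sigma) \subseteq \cW^{r+1}\cF(\partial \sigma)$, and the outer inductive hypothesis applied to the cones of $\partial \sigma$ gives $\cW^{r+1}\cF(\partial \sigma) \subseteq \cW^r\cF(\partial \sigma)$. Taking preimages yields $\mmax \cW^{r-1}\cF(\sigma) \subseteq \res^{-1}(\cW^r\cF(\partial \sigma)) = \cW^r\cF(\sigma)$, finishing the induction.
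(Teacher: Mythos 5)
Your argument is correct, and it is a genuinely different route from the paper's. The paper proves the monotonicity by decomposing $\cF$ into shifts of simple sheaves (using Theorem~\ref{t:decomptheorem}), noting that the construction of $\cW^r$ commutes with direct sums, and then reading the inclusion off the explicit degree description $\cW^r \cL_\sigma[-j](U) = \cL_\sigma[-j](U)^{\geq j + (r-\dim\sigma)/2}$ from Proposition~\ref{p:charofweight}; that description is visibly a decreasing filtration. You instead argue directly from the recursive definition, by double induction on $\dim(\sigma)$ and on $r$. The only non-routine step is the boundary case $r = \dim(\sigma)$, where $\cW^{r+1}$ is defined via $\mmax$ while $\cW^r$ is defined as a preimage, and your bridge — push $\mmax\cW^{r-1}\cF(\sigma)$ down to $\partial\sigma$ via Proposition~\ref{p:weightissheaf}, lift it to weight $r+1$ there via Proposition~\ref{p:madds2}, descend to weight $r$ by the outer induction, and pull back — is exactly what is needed and is correct. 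A small trade-off worth noting: your approach uses only the elementary Propositions~\ref{p:madds2} and~\ref{p:weightissheaf}, whereas Proposition~\ref{p:charofweight} ultimately rests on Corollary~\ref{c:bradenstalk} and hence on Karu's hard Lefschetz theorem; so your proof is logically lighter. On the other hand, the paper's route simultaneously delivers the explicit description of the weight filtration, which is used repeatedly later, so in context the paper's choice is more economical.
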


\begin{proof}
We fix a decomposition of $\cF$ into shifts of simple sheaves:
\[
\cF \simeq \oplus_\alpha \cL_{\sigma_\alpha}[-j_\alpha].
\]
Since the construction of the weight sheaves commutes with direct sums, 
for each $r \in \Z$ we have
\[
\cW^r\cF\simeq \oplus_\alpha 
\cW^r
\cL_{\sigma_\alpha}[-j_\alpha].
\]
Hence it suffices to prove the statement for a shifted simple sheaf $\cL_{\sigma_\alpha}[-j_\alpha]$.
By Proposition~\ref{p:charofweight},
for each open set $U$ we have 
$\cW^r
\cL_{\sigma_\alpha}[-j_\alpha](U) = \cL_{\sigma_\alpha}[-j_\alpha](U)^{\geq j_\alpha
+ \frac{r-\dim(\sigma_\alpha)}{2}}$, 
which is naturally a decreasing filtration.
\end{proof}

We end the section with the functoriality of the weight sheaves.
This result is not needed for the main results of the paper. 

\begin{prop}
Every map $\varphi\colon \cF\to \cG$ of pure sheaves sends $\cW^r \cF$ to $\cW^r\cG$.
\end{prop}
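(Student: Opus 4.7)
The plan is to reduce to the stalkwise statement and then carry out a nested induction mirroring the recursive definition of $\cW^r$. Since for a general open set $U$ the module $\cW^r\cF(U)$ is described as the collection of compatible families $(f_\sigma)_{\sigma\in U}$ with $f_\sigma\in \cW^r\cF(\sigma)$, and since any sheaf map $\varphi$ acts componentwise and commutes with restriction, it suffices to prove
\[
\varphi_\sigma\bigl(\cW^r\cF(\sigma)\bigr)\subseteq \cW^r\cG(\sigma)
\]
for every cone $\sigma\in\Delta$ and every $r\in\Z$. Here we use only that $\varphi$ is a map of sheaves of $\Z_{\geq 0}$-graded $\cA$-modules, so it commutes both with the restriction maps of $\cF,\cG$ and with multiplication by elements of $\mmax$.

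The proof is by outer induction on $\dim(\sigma)$ and, within each cone, by inner induction on $r$, following the three cases of the defining recursion. The case $r\leq 0$ is immediate since $\cW^r\cF(\sigma)=\cF(\sigma)$ and $\cW^r\cG(\sigma)=\cG(\sigma)$. For $0<r\leq\dim(\sigma)$, given $f\in\cW^r\cF(\sigma)=\res_\cF^{-1}(\cW^r\cF(\partial\sigma))$, naturality yields
\[
\res_\cG(\varphi_\sigma(f))=\varphi_{\partial\sigma}(\res_\cF(f))\in\varphi_{\partial\sigma}(\cW^r\cF(\partial\sigma))\subseteq \cW^r\cG(\partial\sigma),
\]
where the last inclusion is the outer induction hypothesis applied on $\partial\sigma$ (whose cones all have strictly smaller dimension), so that $\varphi_\sigma(f)\in\res_\cG^{-1}(\cW^r\cG(\partial\sigma))=\cW^r\cG(\sigma)$. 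Finally, for $r>\dim(\sigma)$, any element of $\cW^r\cF(\sigma)=\mmax\,\cW^{r-2}\cF(\sigma)$ is a finite sum $\sum_i a_i g_i$ with $a_i\in\mmax$ and $g_i\in\cW^{r-2}\cF(\sigma)$, so by $\cA$-linearity and the inner induction hypothesis for $r-2$,
\[
\varphi_\sigma\!\Bigl(\sum_i a_i g_i\Bigr)=\sum_i a_i\,\varphi_\sigma(g_i)\in \mmax\,\cW^{r-2}\cG(\sigma)=\cW^r\cG(\sigma).
\]

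There is no essential obstacle here; the only point requiring minor care is to organize the induction so that the restriction step uses a hypothesis on smaller-dimensional cones (outer induction) while the $\mmax$-multiplication step uses a hypothesis at the same cone but smaller $r$ (inner induction). Both inductions start from trivial base cases ($\sigma=o$ and $r\leq 0$ respectively), so they close without difficulty.
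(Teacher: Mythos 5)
Your proof is correct and follows essentially the same approach as the paper: reduce to the stalks $\cW^r\cF(\sigma)$, then run a nested induction (outer on $\dim(\sigma)$, inner on $r$) mirroring the three cases of the recursive definition, using naturality of $\varphi$ for the middle case and $\cA$-linearity for the $r>\dim(\sigma)$ case. The only addition is your explicit justification for the reduction to stalks, which the paper merely asserts.
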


\begin{proof}
We want to prove
\[
\varphi(\cW^{r}\cF(U)) \subseteq \cW^r \cG(U)
\]
whenever $U$ is an open set.
It suffices to prove the statement for $U = \langle \sigma \rangle$ for any cone $\sigma$, which we do by induction on $\dim(\sigma)$.
The statement is clear when $\sigma = o$.
By induction we may assume the statement for $U = \partial \sigma$.

We further induct on $r$.
The case when $r\leq 0$ is clear.
If $0<r\leq \dim(\sigma)$, then $f\in \cW^r \cF(\sigma)$ is sent to $\varphi_\sigma(f)\in \cG(\sigma)$, which restricts to $\varphi_\sigma(f)|_{\partial \sigma} = \varphi_{\partial \sigma}(f|_{\partial \sigma}) \in \cW^r \cG(\partial \sigma)$, by the induction hypothesis of the first induction.
In other words, $\varphi_\sigma(f)$ is contained in $\res^{-1}(\cW^r \cG(\partial \sigma)) = \cW^r \cG(\sigma)$.

If $r> \dim(\sigma)$, then 
\begin{align*}
\varphi(\cW^{r}\cF(\sigma)) = \varphi(\mmax \cW^{r-2}\cF(\sigma)) 
& = \mmax \varphi(\cW^{r-2}\cF(\sigma)) \\
& \subseteq \mmax \cW^{r-2} \cG(\sigma) \\
& = \cW^{r} \cG(\sigma),
\end{align*}
where the inclusion uses the induction hypothesis of the second induction.
This finishes both inductions.
\end{proof}

\section{Hodge--Deligne polynomials}\label{sec:polysfromfiltrations}

In this section, we define the Hodge--Deligne polynomials
using the filtrations from the weight sheaves. They are motivated by polynomials of the same names in mixed Hodge theory; see for example \cite[p.~5]{KS16b}.

\begin{defn}
Let $\cF$ be a pure sheaf on a fan $\Delta$.
The \emph{Hodge filtration} $F_\bullet$ of $\cF$ is defined to be the increasing filtration on
$\ov{\cF(\Delta)}$ given by
\[
F_p := \ov{\cF(\Delta)}^{\leq p}.
\]
\end{defn}

The \textit{associated graded vector space} $\Gr^F_p \ov{\cF(\Delta)}$ is given by
\[
\Gr^F_p \ov{\cF(\Delta)} =
\ov{\cF(\Delta)}^{\leq p}
/
\ov{\cF(\Delta)}^{\leq p-1} =
\ov{\cF(\Delta)}^{p},\]
hence we also have
$P(\ov{\cF(\Delta)}; t) = \sum_p
\dim_\R (\Gr^F_p
\ov{\cF(\Delta)}
)
t^p$.

Recall from Section~\ref{sec:weightsheaves} the definition of the weight sheaves.

\begin{defn}
Let $\cF$ be a pure sheaf on a fan $\Delta$.
The \emph{weight filtration} $W^\bullet$ of $\cF$ is defined to be the decreasing filtration on $\ov{\cF(\Delta)}$ given by
\[
W^r := \rho(\cW^r \cF(\Delta)),
\]
where $\rho\colon \cF(\Delta)\to \ov{\cF(\Delta)}$ is the quotient map.
\end{defn}

The weight filtration is a well-defined filtration, since by Proposition~\ref{p:WeightSheavesAreDecreasing} the weight sheaves are a decreasing sequence of subsheaves.

Both filtrations commute with direct sums:
for every decomposition $\cF \simeq \oplus_\alpha \cL_{\sigma_\alpha}[-j_\alpha]$ of $\cF$ into shifts of simple sheaves, the Hodge (resp. weight) filtration of $\cF$ is the direct sum of the Hodge (resp. weight) filtrations of the shifted simple sheaves.

\begin{defn}\label{defn:HodgeDelignePoly}
Let $\cF$ be a pure sheaf on a fan $\Delta$.
The \emph{Hodge--Deligne polynomial} of $(\ov{\cF(\Delta)}, F_\bullet, W^\bullet)$ is defined to be
\[
E(\ov{\cF(\Delta)}, F_\bullet, W^\bullet; u, v) =
\sum_{p, q} \dim_\R
(
\Gr^F_p \Gr_W^{p+q} \ov{\cF(\Delta)}
)
u^p v^q.
\]
\end{defn}

By specializing $u\mapsto t$ and $v\mapsto 1$, we recover the Poincar\'e polynomial
of
$\ov{\cF(\Delta)}$.

\begin{lem}\label{lem:hodgemixedh}
Let $\cF = \cL_\sigma[-j]$ be a shifted simple sheaf on a fan $\Delta$, where $j\in \Z_{\geq 0}$.
Then the Hodge--Deligne polynomial of $(\ov{\cF(\Delta)}, F_\bullet, W^\bullet)$ is given by
\[
E(\ov{\cF(\Delta)}, F_\bullet, W^\bullet; u, v) = u^j v^{\dim (\sigma) - j} P(\ov{\cL_\sigma(\Delta)}; uv).
\]
\end{lem}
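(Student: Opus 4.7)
The plan is to unwind the definitions, using Proposition~\ref{p:charofweight} to describe the weight filtration on $\ov{\cF(\Delta)}$ explicitly, and then to pair this with the fact that the Hodge filtration merely extracts single-degree components.

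First I would apply Proposition~\ref{p:charofweight} with $U = \Delta$, obtaining
\[
\cW^r\cF(\Delta) = \cF(\Delta)^{\geq j + (r - \dim(\sigma))/2},
\]
where a half-integer lower bound is implicitly rounded up. Applying the quotient $\rho\colon \cF(\Delta)\to \ov{\cF(\Delta)}$ and invoking \eqref{eq:SubmoduleSentToSubspace} yields
\[
W^r = \ov{\cF(\Delta)}^{\geq k(r)}, \qquad k(r) := \ceil*{j + \tfrac{r-\dim(\sigma)}{2}}.
\]
A short check shows that $k(r+1) = k(r)+1$ when $r-\dim(\sigma)$ is even, and $k(r+1)=k(r)$ when $r-\dim(\sigma)$ is odd; hence
\[
\Gr_W^r\ov{\cF(\Delta)} = \begin{cases} \ov{\cF(\Delta)}^{\,j+k} & \text{if } r-\dim(\sigma) = 2k, \\ 0 & \text{if } r-\dim(\sigma) \text{ is odd.} \end{cases}
\]

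Next I would combine this with the Hodge filtration. Since $\Gr^F_p V = V^p$ for any $\Z_{\geq 0}$-graded vector space $V$, the bigraded piece $\Gr^F_p\Gr_W^{p+q}\ov{\cF(\Delta)}$ is non-zero exactly when $p+q-\dim(\sigma) = 2(p-j)$, equivalently $q = p + \dim(\sigma) - 2j$, in which case it coincides with $\ov{\cF(\Delta)}^p$. Using the shift $\cF = \cL_\sigma[-j]$, we have $\ov{\cF(\Delta)}^p = \ov{\cL_\sigma(\Delta)}^{p-j}$; substituting $e := p-j$ and summing produces
\[
E(\ov{\cF(\Delta)}, F_\bullet, W^\bullet; u,v) = \sum_{e} \dim_\R\bigl(\ov{\cL_\sigma(\Delta)}^{e}\bigr)\, u^{j+e}\, v^{\dim(\sigma) - j + e},
\]
which factors as $u^j v^{\dim(\sigma)-j} P(\ov{\cL_\sigma(\Delta)}; uv)$, as claimed.

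I do not anticipate a serious obstacle; the argument is essentially an unwinding of Definition~\ref{defn:HodgeDelignePoly} together with the explicit formula of Proposition~\ref{p:charofweight}. The only bookkeeping care needed is the rounding convention in $k(r)$ (so that $\Gr_W^r$ vanishes on every other parity) and correctly tracking the degree shift induced by $[-j]$.
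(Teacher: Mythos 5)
Your proof is correct and follows essentially the same route as the paper: apply Proposition~\ref{p:charofweight} together with~\eqref{eq:SubmoduleSentToSubspace} to identify the weight filtration on $\ov{\cF(\Delta)}$ with the degree filtration, observe that the parity (via the implicit ceiling) kills the odd-graded pieces, pair with the Hodge filtration, and re-index and factor. The only stylistic difference is that you make the rounding convention and the resulting vanishing of $\Gr_W^r$ in odd parity explicit, whereas the paper leaves this implicit.
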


\begin{proof}
By Proposition~\ref{p:charofweight}
and \eqref{eq:SubmoduleSentToSubspace}, the weight filtration $W^\bullet$ on $\ov{\cF(\Delta)}$ is given by
\[
W^r
=
 \ov{\cF(\Delta)}^{\geq j+\frac{r-\dim(\sigma)}{2}}.
\]
Hence the associated graded vector space $\Gr_W^r \ov{\cF(\Delta)}$ is given by
\[
\Gr_W^r \ov{\cF(\Delta)} =
\begin{cases}
\ov{\cF(\Delta)}^{j + \frac{r-\dim(\sigma)}{2}} & \text{if } r - \dim(\sigma) \text{ is even}, \\
0 & \text{otherwise.}
\end{cases}
\]

We restrict to the case when $r-\dim(\sigma)$ is even.
Let $k = \frac{r-\dim(\sigma)}{2}$.
Then 
the associated graded vector space is given by
\[
\Gr^{F}_p \Gr_{W}^{r} \ov{\cF(\Delta)} =
\begin{cases}
\ov{\cF(\Delta)}^{j + k} & \text{if }
p = j+ k, \\
0 & \text{otherwise.}
\end{cases}
\]
Thus the Hodge--Deligne polynomial
is given by
\begin{align*}
E(\ov{\cF(\Delta)}, F_\bullet, W^\bullet; u, v)
& =
\sum_{p, r}
\dim_\R(
\Gr^F_p \Gr_{W}^{r}
\ov{\cF(\Delta)}
)
u^p v^{r-p}\\
& =
\sum_{k}
\dim_\R(
\ov{\cL_{\sigma}[-j](\Delta)}^{j+k}
)
u^{j+k} v^{\dim(\sigma) + k - j}\\
& =
u^{j} v^{\dim(\sigma)- j}
\sum_{k}
\dim_\R(
\ov{\cL_{\sigma}(\Delta)}^{k}
)
(u v)^{k}.
\end{align*}
The result then follows from the definition of $P(\ov{\cL_\sigma(\Delta)}; t)$ in Section~\ref{sec:puresheaves}.
\end{proof}

\subsection{The case of a subdivision}

\begin{defn}
Let $\pi\colon \Sigma\to \Delta$ be a fan subdivision and $\cF$ be a pure sheaf on $\Sigma$.
The \emph{monodromy weight filtration} $M^\bullet$ of $(\cF, \pi)$ is defined to be the decreasing filtration on $\ov{\pi_*\cF(\Delta)}$ given by
\[
M^r := \rho((\pi_*\cW^r\cF)(\Delta)),
\]
where $\cW^r \cF$ is the $r$-weight sheaf of $\cF$ on $\Sigma$ and $\rho\colon (\pi_*\cF)(\Delta)\to \ov{(\pi_*\cF)(\Delta)}$ is the quotient map.
\end{defn}

The monodromy weight filtration commutes with direct sums:
for every decomposition $\cF \simeq \oplus_\alpha \cL_{\sigma_\alpha}[-j_\alpha]$ of $\cF$ into shifted simple sheaves, the monodromy weight filtration of $(\cF, \pi)$ is the direct sum of the monodromy weight filtrations of the $(\cL_{\sigma_\alpha}[-j_\alpha], \pi)$'s.

Recall from Section~\ref{subsec:pureonquasi} that
the direct image sheaf $\pi_* \cF$ is a pure sheaf on $\Delta$.
Thus we can also consider 
the weight sheaves $\cW^r \pi_* \cF$ of the pure sheaf $\pi_*\cF$ on $\Delta$.
Hence the graded vector space $\ov{\pi_*\cF(\Delta)}$ is now equipped with 
the Hodge filtration $F_\bullet$,
the weight filtration $W^\bullet$
and the monodromy weight filtration $M^\bullet$.

\begin{defn}
Let $\pi\colon \Sigma\to \Delta$ be a fan subdivision and $\cF$ be a pure sheaf on $\Sigma$.
The \emph{limit Hodge--Deligne polynomial} of $(\ov{\pi_*\cF(\Delta)}, F_\bullet, M^\bullet)$ is defined to be
\[
E(\ov{\pi_*\cF(\Delta)}, F_\bullet, M^\bullet; u, v) =
\sum_{p, q} \dim_\R
(
\Gr^F_p \Gr_M^{p+q} \ov{\pi_*\cF(\Delta)}
) u^p v^q.
\]
\end{defn}

By specializing $u\mapsto t$ and $v\mapsto 1$, we recover the Poincar\'e polynomial
of
$\ov{\pi_*\cF(\Delta)}$.

\begin{defn}
Let $\pi\colon \Sigma\to \Delta$ be a fan subdivision and $\cF$ be a pure sheaf on $\Sigma$.
The \emph{refined limit Hodge--Deligne polynomial} of $(\ov{\pi_*\cF(\Delta)}, F_\bullet, W^\bullet, M^\bullet)$ is defined to be
\[
E(\ov{\pi_*\cF(\Delta)}, F_\bullet, W^\bullet, M^\bullet; u, v, w) =
\sum_{p, q, r} \dim_\R
(
\Gr^F_p \Gr_M^{p+q} \Gr_W^{r} \ov{\pi_*\cF(\Delta)}
) u^p v^q w^r.
\]
\end{defn}

By specializing $u\mapsto u w^{-1}$ and $v\mapsto 1$, we recover the Hodge--Deligne polynomial
of $(\ov{\pi_*\cF(\Delta)}, F_\bullet, W^\bullet)$
from the refined limit Hodge--Deligne polynomial.
By specializing $w\mapsto 1$, we recover the limit Hodge--Deligne polynomial
of $(\ov{\pi_*\cF(\Delta)}, F_\bullet, M^\bullet)$
from the refined limit Hodge--Deligne polynomial.

\begin{lem}\label{lem:mixedpolyslimit}
Let $\pi\colon \Sigma\to \Delta$ be a fan subdivision and $\cF = \cL_\sigma[-j]$ be a shifted simple sheaf on $\Sigma$, where $j\in \Z_{\geq 0}$.
Then we have the following:
\begin{enumerate}
\item \label{item:lem21}
The Hodge--Deligne polynomial of $(\ov{\pi_*\cF(\Delta)}, F_\bullet, W^\bullet)$ is given by
\[
E(\ov{\pi_*\cF(\Delta)}, F_\bullet , W^\bullet; u, v) =
\sum_{\tau\in \Delta}
u^j v^{\dim(\tau) - j}
L(\pi_* \cL_\sigma, \tau; u v^{-1}) \cdot
P(\ov{\cL_\tau(\Delta)}; uv).
\]

\item \label{item:lem22}
The limit Hodge--Deligne polynomial of $(\ov{\pi_*\cF(\Delta)}, F_\bullet, M^\bullet)$ is given by
\[
E(\ov{\pi_*\cF(\Delta)}, F_\bullet, M^\bullet; u, v) =  u^j v^{\dim( \sigma) - j} P(\ov{\pi_*\cL_\sigma(\Delta)}; u v).
\]

\item \label{item:lem23}
The refined limit Hodge--Deligne polynomial of $(\ov{\pi_*\cF(\Delta)}, F_\bullet, W^\bullet, M^\bullet)$ is given by
\[
E(\ov{\pi_*\cF(\Delta)}, F_\bullet, W^\bullet, M^\bullet ; u, v, w)
=
u^j
v^{\dim(\sigma)-j}
\sum_{\tau\in \Delta}
w^{\dim(\tau) }
L(\pi_*\cL_\sigma, \tau; uv) \cdot
P(\ov{\cL_\tau(\Delta)}; u v w^2).
\]
\end{enumerate}
\end{lem}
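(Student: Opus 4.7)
The plan is to reduce all three formulas to single-summand calculations by combining the decomposition theorem (Theorem~\ref{t:decomptheorem}) applied to $\pi_*\cL_\sigma$ with Lemma~\ref{lem:hodgemixedh} and the explicit description of weight sheaves in Proposition~\ref{p:charofweight}. Fix a decomposition $\pi_*\cL_\sigma \simeq \bigoplus_\alpha \cL_{\tau_\alpha}[-j_\alpha]$, which induces $\pi_*\cF \simeq \bigoplus_\alpha \cL_{\tau_\alpha}[-j-j_\alpha]$; by definition $L(\pi_*\cL_\sigma, \tau; t) = \sum_{\alpha \colon \tau_\alpha = \tau} t^{j_\alpha}$.

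For part \eqref{item:lem21}, $F_\bullet$ and $W^\bullet$ on $\ov{\pi_*\cF(\Delta)}$ split along this $\Delta$-decomposition. I would apply Lemma~\ref{lem:hodgemixedh} to each summand $\cL_{\tau_\alpha}[-j-j_\alpha]$, then sum over $\alpha$ and regroup by $\tau$, recognizing the internal factor $\sum_{\alpha \colon \tau_\alpha = \tau}(uv^{-1})^{j_\alpha}$ as $L(\pi_*\cL_\sigma, \tau; uv^{-1})$, which yields the claimed formula. For part \eqref{item:lem22}, the crucial observation is that although $M^\bullet$ is defined on $\Sigma$, applying Proposition~\ref{p:charofweight} to the shifted simple sheaf $\cF = \cL_\sigma[-j]$ on $\Sigma$ gives $\cW^r\cF(\Sigma) = \cF(\Sigma)^{\geq j + (r-\dim\sigma)/2}$. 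Passing to $\ov{\pi_*\cF(\Delta)}$ via \eqref{eq:SubmoduleSentToSubspace} identifies $M^r$ with the pure degree filtration $\ov{\pi_*\cF(\Delta)}^{\geq j + (r-\dim\sigma)/2}$; computing $\Gr^F_p \Gr^M_{p+q}$ then mirrors the proof of Lemma~\ref{lem:hodgemixedh}, with $\cL_\sigma$ replaced by $\pi_*\cL_\sigma$, producing $u^j v^{\dim(\sigma) - j} P(\ov{\pi_*\cL_\sigma(\Delta)}; uv)$.

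For part \eqref{item:lem23}, the key observation is that all three filtrations $F_\bullet$, $W^\bullet$, $M^\bullet$ split along the $\Delta$-decomposition of $\pi_*\cF$: this is given for $F_\bullet$ and $W^\bullet$ in the paper, and follows for $M^\bullet$ from part \eqref{item:lem22}, since any pure degree filtration respects a graded direct sum. On each summand $\cL_\tau[-j-j_\alpha]$, Proposition~\ref{p:charofweight} applied on $\Delta$ determines the $W^\bullet$-threshold, while the description of $M^\bullet$ from part \eqref{item:lem22} restricts directly. A degree-$k$ element of $\ov{\cL_\tau(\Delta)}$ contributes in tridegree
\[
(p, q, r) = \bigl(j+j_\alpha+k,\; k+j_\alpha+\dim(\sigma)-j,\; 2k+\dim(\tau)\bigr);
\]
summing over $k$ and $\alpha$ and regrouping by $\tau$ yields the claimed formula.

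The main obstacle is the compatibility of the $\Sigma$-defined filtration $M^\bullet$ with the non-canonical $\Delta$-decomposition of $\pi_*\cF$. This is resolved by part \eqref{item:lem22}: for a shifted simple sheaf $\cF$ on $\Sigma$, the filtration $M^\bullet$ on $\ov{\pi_*\cF(\Delta)}$ is merely a shift of the standard degree filtration, and therefore is automatically compatible with any graded direct-sum decomposition of $\ov{\pi_*\cF(\Delta)}$.
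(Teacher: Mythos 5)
Your proposal is correct and rests on the same pillars as the paper's proof: decompose $\pi_*\cF$ into $\Delta$-simple summands, use Proposition~\ref{p:charofweight} together with \eqref{eq:SubmoduleSentToSubspace} to identify all three filtrations as shifted degree filtrations on each summand, and then read off the tridegree bookkeeping (your $(p,q,r)$ formula matches the paper's). The only organizational difference is that the paper proves part~\eqref{item:lem23} directly and obtains~\eqref{item:lem21} and~\eqref{item:lem22} by the specializations recorded just before the lemma, whereas you prove all three in order and use part~\eqref{item:lem22} to justify the compatibility of $M^\bullet$ with the non-canonical $\Delta$-decomposition. You are right to flag this compatibility as the one point needing an argument: the paper handles it implicitly by writing the induced $M^\bullet$ on each summand $V$ as $V^{\geq j+\frac{p+q-\dim(\sigma)}{2}}$, which is exactly the fact that a pure degree filtration respects any graded direct sum, as you state explicitly. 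Both presentations are sound; yours makes the key observation more visible at the cost of slightly more repetition.
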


\begin{proof}
It suffices to prove only the third statement, 
since the other statements follow from specializations.

Since the direct image sheaf $\pi_* \cF$ is a pure sheaf on $\Delta$, we choose a decomposition of $\pi_* \cF$ into shifts of simple sheaves on $\Delta$:
\[
\pi_* \cF = \bigoplus_{\alpha} \cL_{\tau_{\alpha}}[-j-j_{\alpha}].
\]
We fix $\alpha$ and let $V = \ov{\cL_{\tau_{\alpha}}[-j-j_{\alpha}](\Delta)}$.
By Proposition~\ref{p:charofweight}
and \eqref{eq:SubmoduleSentToSubspace}, the weight filtration $W^\bullet$ on $V$,
induced by the weight sheaves of $\pi_* \cF$ on $\Delta$,
is given by
\[
W^r
=
 V^{\geq j+j_\alpha + \frac{r-\dim(\tau_\alpha)}{2}}.
\]
Hence the associated graded vector space $\Gr_W^r V$ is given by
\[
\Gr_W^r V =
\begin{cases}
V^{j + j_\alpha + \frac{r-\dim(\tau_\alpha)}{2}} & \text{if } r - \dim(\tau_\alpha) \text{ is even,}\\
0 & \text{otherwise.}
\end{cases}
\]

We then consider the induced filtration of ${M}^\bullet$ on $\Gr_W^r V$.
By Proposition~\ref{p:charofweight}
and \eqref{eq:SubmoduleSentToSubspace},
the monodromy weight filtration $M^\bullet$ on $V$,
induced by the weight sheaves of $\cF$ on $\Sigma$,
is given by
\[
M^{p+q}
 = V^{\geq j+ \frac{p+q-\dim(\sigma)}{2}}.
\]
Hence the induced filtration of $M^\bullet$ on
$\Gr_W^r V$
is given by
\[
M^{p+q}
=
\begin{cases}
V^{j + j_\alpha + \frac{r-\dim(\tau_\alpha)}{2}}
& 
\begin{split}
& \text{if }  
 r-\dim(\tau_\alpha) \text{ is even and } \\
& \qquad p+q \leq 2 j_\alpha + r-\dim(\tau_\alpha) + \dim(\sigma)
,
\end{split}
\\
0 & \text{otherwise,}
\end{cases}
\]
and the associated graded vector space $\Gr_{M}^{p+q} \Gr_{W}^{r} V$ is given by
\[
\Gr_{M}^{p+q} \Gr_{W}^{r} V =
\begin{cases}
V^{j + j_\alpha + \frac{r-\dim(\tau_\alpha)}{2}} & 
\begin{split}
& \text{if }  
 r-\dim(\tau_\alpha) \text{ is even and } \\
& \qquad p+q  = 2 j_\alpha + r-\dim(\tau_\alpha) + \dim(\sigma)
,
\end{split}
\\
0 & \text{otherwise.}
\end{cases}
\]

We restrict to the case when $r - \dim(\tau_\alpha)$ is even and $p+q  = 2 j_\alpha + r-\dim(\tau_\alpha) + \dim(\sigma)$.
Let $k = \frac{r-\dim(\tau_\alpha)}{2}$.
Then 
the associated graded vector space is given by
\[
\Gr^{F}_p \Gr_{M}^{p+q} \Gr_{W}^{r} V =
\begin{cases}
V^{j + j_\alpha + k} & \text{if }
p = j+j_\alpha + k, \\
0 & \text{otherwise.}
\end{cases}
\]
Thus the refined limit Hodge--Deligne polynomial
is given by
\begin{align*}
& E(\ov{\pi_*\cF(\Delta)}, F_\bullet, W^\bullet, M^\bullet; u, v, w) \\
& \qquad =
\sum_{\alpha}
\sum_{p, q, r}
\dim_\R(
\Gr^F_p \Gr_{M}^{p+q} \Gr_{W}^{r}
\ov{\cL_{\tau_\alpha}[-j-j_\alpha](\Delta)}
)
u^p v^q w^r \\
& \qquad =
\sum_{\alpha}
\sum_{k}
\dim_\R
(
\ov{\cL_{\tau_\alpha}[-j-j_\alpha](\Delta)}^{
j+j_\alpha + k}
)
u^{j+j_\alpha+k}
v^{\dim(\sigma)+j_\alpha + k - j}
w^{\dim(\tau_\alpha) + 2k}.
\end{align*}
By removing the shift,
we get
\begin{align*}
& E(\ov{\pi_*\cF(\Delta)}, F_\bullet, W^\bullet, M^\bullet; u, v, w) \\
& \qquad = 
\sum_{\alpha}
\sum_{k}
\dim_\R
(
\ov{\cL_{\tau_\alpha}(\Delta)}^{
k}
)
u^{j+j_\alpha+k}
v^{\dim(\sigma)+j_\alpha + k - j}
w^{\dim(\tau_\alpha) + 2k}\\
& \qquad =
u^j
v^{\dim(\sigma)-j}
\sum_{\tau \in \Delta}
w^{\dim(\tau) }
\sum_{\substack{\alpha\\ \tau_\alpha = \tau}}
u^{j_\alpha } v^{j_\alpha}
\sum_k \dim_\R
(\ov{\cL_{\tau}(\Delta)}^{ k})
u^k v^k w^{2k} \\
& \qquad =
u^j
v^{\dim(\sigma)-j}
\sum_{\tau \in \Delta}
w^{\dim(\tau) }
L(\pi_*\cL_\sigma, \tau; uv) \cdot
P(\ov{\cL_\tau(\Delta)}; u v w^2),
\end{align*}
where the last equality follows from the definitions of
$P(\ov{\cL_\tau(\Delta)}; t)$
and
$L(\pi_*\cL_\sigma, \tau; t)$ in Section~\ref{sec:puresheaves}.
\end{proof}

\section{The mixed \texorpdfstring{$h$}{h}-polynomial from the minimal extension sheaf}\label{sec:hpoly}

In this section, 
we first introduce the toric $h$-polynomial
following \cite[Section~3.16]{Sta12}.
We then introduce the mixed $h$-polynomial, 
an enrichment of the toric $h$-polynomial, 
following \cite[Section~5]{KS16a}.
At the end, we show that the Hodge--Deligne polynomial defined in Section~\ref{sec:polysfromfiltrations} is equal to the mixed $h$-polynomial.

\subsection{The toric \texorpdfstring{$h$}{h}-polynomial}

For each cone $\sigma$, 
we define the \emph{toric $g$-polynomial} $g(\langle \sigma \rangle; t)$ \cite[Section~2]{Sta87} recursively as follows:
Let $g(\langle o \rangle; t) : = 1$.
Then for a cone $\sigma$ of positive dimension, 
we define $g(\langle \sigma \rangle; x)$ to be the unique polynomial of degree strictly less than $\dim(\sigma)/2$ satisfying
\[
t^{\dim(\sigma)} g(\langle \sigma \rangle; t^{-1})=
 \sum_{o\leq \tau \leq \sigma} g(\langle \tau \rangle; t)  \cdot
(t-1)^{\dim(\sigma)-\dim(\tau)}.
\]

\begin{defn}[{\cite[Section~2]{Sta87}}]
For a quasi-convex fan $\Delta$, its \emph{toric $h$-polynomial} $h(\Delta; t)$
is defined to be the unique polynomial satisfying 
\[
t^{\dim (\Delta)} h(\Delta; t^{-1}) = \sum_{\sigma \in \Delta} g(\langle\sigma \rangle; t)  \cdot
(t-1)^{\dim (\Delta) -\dim (\sigma)}.
\]
\end{defn}

Both the toric $g$-polynomial and the toric $h$-polynomial depend on the poset structure of the fan only.
When $\sigma$ is a cone, 
by comparing their definitions, we have that $h(\langle \sigma \rangle; t) = g(\langle\sigma \rangle; t)$ \cite[Example~3.14]{KS16a}.
Thus we may use them interchangeably.

\begin{example}[{\cite[Corollary~2.2]{Sta87}}]
The toric $h$-polynomial of a simplicial quasi-convex fan $\Delta$
is easy to describe:
it satisfies
\[
t^{\dim(\Delta)} h(\Delta; t^{-1}) = \sum_{i=0}^{\dim(\Delta)} f_i (t-1)^{\dim(\Delta) - i},
\]
where $f_i$ is the number of $i$-dimensional faces in $\Delta$.
\end{example}

Given a proper fan morphism $\pi\colon \Sigma\to \langle \sigma \rangle$,
its \emph{local $h$-polynomial} $\ell^h_{\langle \sigma \rangle}(\Sigma; t)$ is defined to be \cite[Definition~4.1]{KS16a}
\[
\ell^h_{\langle\sigma \rangle}(\Sigma; t) :=
\sum_{\tau \in \langle \sigma \rangle}
h(\pi^{-1}(\langle \tau\rangle ); t) \cdot (-1)^{\dim(\sigma)-\dim (\tau)} g((\link_{\langle \sigma \rangle} \tau)^*; t),
\]
where
$(\link_{\langle \sigma \rangle} \tau)^*$ is the \textit{dual poset} of the face poset of $\link_{\langle \sigma \rangle} \tau$.
Here,
the preimage fan $\pi^{-1}(\langle \tau \rangle)$ has support isomorphic to $\tau \times \R^{\dim(\Sigma) - \dim(\sigma)}$,
hence it is quasi-convex in $\Span(\pi^{-1}(\langle \tau \rangle))$
and its toric $h$-polynomial is well-defined. 
We note that the local $h$-polynomial is of degree at most $\dim(\Sigma)$ \cite[Corollary~4.5]{KS16a}.
This polynomial was introduced by Stanley \cite[Section~2]{Sta92}; see \cite{Ath16} for a survey. 

\begin{remark}
A proper fan morphism is a \textit{strong formal subdivision} as defined in \cite[Definition~3.17]{KS16a}.
Hence we can use the results in \cite{KS16a}.
\end{remark}

There is a decomposition theorem for the toric $h$-polynomial.
It was proved by Stanley \cite[Theorem~3.3]{Sta92} for simplicial subdivisions of a simplex, by Athanasiadis \cite[Theorem~3.3]{Ath12} for homology subdivisions of a simplicial complex and by Katz and Stapledon \cite[Remark~4.2]{KS16a} for strong formal subdivisions of an Eulerian poset.
We prove the theorem for proper fan morphisms of quasi-convex fans below, 
making use of the case of the theorem proved by Katz and Stapledon.

\begin{prop}\label{p:hPolyGeneralDecompTheorem}
For a proper fan morphism $\pi\colon \Sigma\to \Delta$ of quasi-convex fans, we have
\begin{equation}\label{eq:decompforh}
h(\Sigma; t) = \sum_{\sigma \in \Delta} \ell^h_{\langle \sigma\rangle}(\pi^{-1}(\langle \sigma\rangle); t)  \cdot h(\link_\Delta \sigma ; t).
\end{equation}
\end{prop}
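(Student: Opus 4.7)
The plan is to prove the formula by Möbius inversion in the face lattice of $\Delta$, following the strategy of Stanley \cite{Sta92}, Athanasiadis \cite{Ath12}, and Katz--Stapledon \cite{KS16a}. Indeed this proposition is the fan analogue of the Katz--Stapledon decomposition theorem for strong formal subdivisions of Eulerian posets, and once one observes that a proper fan morphism between quasi-convex fans gives a strong formal subdivision in the poset-theoretic sense, the main work is essentially bookkeeping.

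The key steps are as follows. First, check that for each cone $\sigma \in \Delta$, the preimage fan $\pi^{-1}(\langle \sigma \rangle)$ has support $P^{-1}(\sigma) \simeq \sigma \times \ker P$ (where $P$ is the underlying linear map of $\pi$) and is therefore quasi-convex in its span, so $h(\pi^{-1}(\langle \sigma \rangle); t)$ is well-defined. Second, invert the defining relation
\[
\ell^h_{\langle \sigma \rangle}(\pi^{-1}(\langle \sigma \rangle); t) = \sum_{\tau \leq \sigma}(-1)^{\dim \sigma - \dim \tau} h(\pi^{-1}(\langle \tau \rangle); t) \cdot g((\link_{\langle \sigma \rangle} \tau)^{\ast}; t)
\]
using Stanley's identity in the Eulerian face lattice of a pointed cone,
\[
\sum_{\tau \leq \gamma \leq \sigma}(-1)^{\dim \gamma - \dim \tau} g(\link_{\langle \sigma \rangle} \gamma; t) \cdot g((\link_{\langle \gamma \rangle} \tau)^{\ast}; t) = \delta_{\tau,\sigma},
\]
to obtain
\[
h(\pi^{-1}(\langle \sigma \rangle); t) = \sum_{\tau \leq \sigma} \ell^h_{\langle \tau \rangle}(\pi^{-1}(\langle \tau \rangle); t) \cdot g(\link_{\langle \sigma \rangle} \tau; t).
\]
Third, expand the defining relation for $h(\Sigma; t)$,
\[
t^{\dim \Sigma} h(\Sigma; t^{-1}) = \sum_{\gamma \in \Sigma} g(\langle \gamma \rangle; t)(t-1)^{\dim \Sigma - \dim \gamma},
\]
group the cones of $\Sigma$ according to their image $\pi(\gamma) = \sigma \in \Delta$, and reassemble the resulting partial sums using the inversion above together with a second application of the $g$-polynomial identity, now inside each link $\link_\Delta \sigma$. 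After comparing with the definition of $h(\link_\Delta \sigma; t)$ and substituting $t \mapsto t^{-1}$, the claimed formula emerges.

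The main obstacle will be the global rearrangement in Step~3: the double sums must be reindexed so that the contribution of each $\sigma \in \Delta$ is cleanly recognizable as $\ell^h_{\langle \sigma \rangle}(\pi^{-1}(\langle \sigma \rangle); t) \cdot h(\link_\Delta \sigma; t)$. This is where the quasi-convexity of $\Delta$ is essential: Proposition~\ref{p:linkofqcisqc} guarantees that each link $\link_\Delta \sigma$ is itself quasi-convex so that $h(\link_\Delta \sigma; t)$ is defined and obeys its own defining relation in terms of $g$, while the Eulerian property of face lattices of pointed cones (which holds with no rationality or completeness hypothesis) drives both Möbius inversions. An alternative, less hands-on route is to invoke \cite[Remark~4.2]{KS16a} directly, after checking that $\pi$ is a strong formal subdivision in the sense of \cite[Definition~3.17]{KS16a} of the face poset of $\Delta$ augmented with a top element $\hat{1}$, and that the quasi-convexity of $\Delta$ is precisely what is needed to identify $g([\sigma,\hat{1}]; t)$ with $h(\link_\Delta \sigma; t)$.
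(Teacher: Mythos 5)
Your proposal follows essentially the same route as the paper: expand $h(\Sigma; t)$ via its defining relation, group cones of $\Sigma$ by their image in $\Delta$, apply the local decomposition from \cite{KS16a} to each preimage $\pi^{-1}(\langle\sigma\rangle)$, switch the order of summation, and recognize $h(\lk_\Delta \sigma; t)$ from its defining relation. Your Step~2, obtained by M\"obius inversion, is exactly the content of \cite[Remark~4.2]{KS16a} applied to the fan subdivision of a single cone $\langle\sigma\rangle$, which is what the paper cites at that point. Two ingredients in the reassembly (Step~3) are glossed over and should be made explicit in a full write-up. First, after grouping by $\pi(\gamma)=\sigma$ the inner sum is $\sum_{\gamma:\pi(\gamma)=\sigma} g(\langle\gamma\rangle;t)(t-1)^{\dim\Sigma-\dim\gamma}$, and identifying this with $(t-1)^{\dim\Delta-\dim\sigma}\,h(\pi^{-1}(\langle\sigma\rangle);t)$ is not the defining relation of the latter (which sums over the whole preimage fan, not just the open star); the paper invokes \cite[Proposition~3.29(3)]{KS16a} for precisely this identity. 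Second, the final substitution $t\mapsto t^{-1}$ only closes up because the local $h$-polynomial is symmetric, $\ell^h_{\langle\sigma\rangle}(\pi^{-1}(\langle\sigma\rangle);t)=t^{\dim\pi^{-1}(\langle\sigma\rangle)}\ell^h_{\langle\sigma\rangle}(\pi^{-1}(\langle\sigma\rangle);t^{-1})$ by \cite[Corollary~4.5]{KS16a}; without it the powers of $t$ do not cancel cleanly.

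Your ``alternative, less hands-on route'' is the one place I would push back. Augmenting the face poset of $\Delta$ with a single top element $\hat{1}$ yields an Eulerian poset when $\Delta$ is complete, but for a general quasi-convex fan $\Delta$ (e.g., a single full-dimensional cone) the augmented poset is not Eulerian, so \cite[Remark~4.2]{KS16a} does not apply directly, and the identification of $g([\sigma,\hat{1}];t)$ with $h(\lk_\Delta\sigma;t)$ is precisely what would need to be proved. The paper's proof is structured exactly to sidestep this: it only applies the Katz--Stapledon decomposition to the Eulerian subposets $\langle\gamma\rangle$, and handles the global quasi-convex statement by the explicit rearrangement in Step~3. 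So stick with your hands-on route, but spell out the two citations above.
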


\begin{proof}
By definition,
the toric $h$-polynomial of $\Sigma$ satisfies
\begin{align*}
t^{\dim(\Sigma)}h(\Sigma; t^{-1}) 
&= 
\sum_{\tau\in \Sigma}
g(\langle \tau \rangle; t)
\cdot (t-1)^{\dim(\Sigma) - \dim(\tau)}\\
&= 
\sum_{\gamma\in \Delta}
\sum_{\substack{\tau\in \Sigma\\ \pi(\tau) = \gamma}}
g(\langle \tau \rangle; t)
\cdot 
(t-1)^{\dim(\Sigma) - \dim(\tau)}.
\end{align*}
By \cite[Proposition~3.29(3)]{KS16a},
we have
\[
h(\pi^{-1}(\langle \gamma \rangle); t)
=
\sum_{\substack{\tau\in \Sigma\\ \pi(\tau) = \gamma}}
g(\langle \tau \rangle; t)
\cdot
 (t-1)^{\dim(\Sigma) - \dim(\Delta) + \dim(\gamma) - \dim(\tau)}.
\]
Thus we have
\begin{align*}
t^{\dim(\Sigma)}h(\Sigma; t^{-1}) 
& = 
\sum_{\gamma\in \Delta}
h(\pi^{-1}(\langle \gamma \rangle); t)
\cdot 
(t - 1)^{\dim(\Delta) -\dim(\gamma) }\\
& = 
\sum_{\gamma\in \Delta}
\bigg(
\sum_{\sigma \in \langle \gamma \rangle}
\ell^h_{\langle \sigma\rangle}(\pi^{-1}(\langle \sigma\rangle); t) 
\cdot 
h(\lk_{\langle \gamma \rangle} \sigma; t)
\bigg)
\cdot 
(t - 1)^{\dim(\Delta) -\dim(\gamma) },
\end{align*}
where we use 
the case of the theorem for the fan subdivision of $\langle \tau \rangle$
as proved in \cite[Remark~4.2]{KS16a}.
Since 
$\lk_{\langle \gamma \rangle} \sigma$
is equal to the fan $\langle \ov{\gamma} \rangle$ in $\lk_\Delta \sigma$, 
we have 
$h(\lk_{\langle \gamma \rangle} \sigma; t)=
h(\langle \ov{\gamma} \rangle; t) = 
g(\langle \ov{\gamma} \rangle; t)$.
Hence by switching the order of summation 
we have
\begin{align*}
t^{\dim(\Sigma)}h(\Sigma; t^{-1}) 
& = 
\sum_{\sigma \in \Delta}
\sum_{\ov{\gamma} \in \lk_\Delta \sigma}
\ell^h_{\langle \sigma\rangle}(\pi^{-1}(\langle \sigma\rangle); t) 
\cdot 
g(\langle \ov{\gamma} \rangle; t)
\cdot 
(t - 1)^{\dim(\lk_\Delta \sigma) -\dim(\ov{\gamma}) }
\\
& = 
\sum_{\sigma \in \Delta}
\ell^h_{\langle \sigma\rangle}(\pi^{-1}(\langle \sigma\rangle); t) 
\cdot 
\bigg(
\sum_{\ov{\gamma} \in \lk_\Delta \sigma}
g(\langle \ov{\gamma} \rangle; t)
\cdot 
(t - 1)^{\dim(\lk_\Delta \sigma) -\dim(\ov{\gamma}) }
\bigg)
.
\end{align*}
Now, 
by definition, 
the toric $h$-polynomial of the fan $\lk_\Delta \sigma$
satisfies
\[
t^{\dim(\Delta) - \dim(\sigma)}h(\lk_\Delta \sigma; t^{-1})
=
\sum_{\ov{\gamma} \in \lk_\Delta \sigma}
g(\langle \ov{\gamma} \rangle; t)
\cdot 
(t - 1)^{\dim(\lk_\Delta \sigma) -\dim(\ov{\gamma}) }
.
\]
Furthermore,
the local $h$-polynomial is symmetric \cite[Corollary~4.5]{KS16a}:
\begin{align*}
\ell^h_{\langle \sigma\rangle}(\pi^{-1}(\langle \sigma\rangle); t)
&=
t^{\dim(\Sigma) - \dim(\Delta) + \dim(\sigma)}
\ell^h_{\langle \sigma\rangle}(\pi^{-1}(\langle \sigma\rangle); t^{-1}).
\end{align*}
Thus we have
\[
t^{\dim(\Sigma)}h(\Sigma; t^{-1}) 
=
t^{\dim(\Sigma)}
\sum_{\sigma \in \Delta}
\ell^h_{\langle \sigma\rangle}(\pi^{-1}(\langle \sigma\rangle); t^{-1})
\cdot 
h(\lk_\Delta \sigma; t^{-1}).
\]
The result then follows from 
canceling 
$t^{\dim(\Sigma)}$
and
replacing $t^{-1}$ by $t$.
\end{proof}

Using the decomposition theorem for the toric $h$-polynomial, 
we define the following multivariable polynomial.

\begin{defn}[{\cite[Definition~5.1]{KS16a}}]\label{defn:mixedh}
Let $\pi\colon \Sigma\to \Delta$ be a proper fan morphism of quasi-convex fans.
We define the \emph{mixed $h$-polynomial} $h_\Delta(\Sigma; u, v) \in \R[u, v]$ to be
\[
h_\Delta(\Sigma; u, v)
=
\sum_{\sigma\in \Delta}
v^{
\dim(\pi^{-1}(\langle \sigma\rangle ))}
\ell^h_{\langle \sigma\rangle}(\pi^{-1}(\langle \sigma\rangle); u v^{-1}) \cdot h(\link_\Delta \sigma ; uv).
\]
\end{defn}

By specializing $v\mapsto 1$, we recover the toric $h$-polynomial of $\Sigma$.

\subsection{The minimal extension sheaf}\label{sec:minextsheaf}

We first prove a lemma relating the simple sheaf based at $\sigma$ and the minimal extension sheaf on $\lk_\Delta \sigma$.
While the lemma is known to the community, 
for the purpose of completeness we provide a proof here. 

\begin{lem}
\label{lem:SimpleSheafAndMinimalExtensionSheaf}
Let $\Delta$ be a fan in $\R^d$ and $\sigma \in \Delta$ be a cone.
Then we have
\[
P(\ov{\cL_\sigma(\Delta)}; t) 
=
P(\ov{\cL_{\lk_\Delta \sigma}(\lk_\Delta \sigma)}; t).
\]
\end{lem}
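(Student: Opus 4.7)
The plan is to identify the simple sheaf $\cL_\sigma$, viewed on the star $\starr_\Delta \sigma := \{\tau \in \Delta : \tau \geq \sigma\}$, as essentially the tensor product of $A_\sigma$ with the minimal extension sheaf on $\lk_\Delta \sigma$. First, I would choose a linear complement $V$ to $\Span(\sigma)$ in $\R^d$, so that the quotient $p \colon \R^d \to \R^d/\Span(\sigma)$ restricts to an isomorphism on $V$. This yields graded ring identifications $A_\tau \cong A_\sigma \otimes_\R A_{\ov{\tau}}$ for every $\tau \geq \sigma$, compatible with face inclusions, as well as $A \cong A_\sigma \otimes_\R A'$, where $A'$ denotes the polynomial ring on $\R^d/\Span(\sigma)$.

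Next, I would construct a candidate sheaf $\cF$ on $\Delta$ as follows: for $\tau \geq \sigma$ set $\cF(\tau) := A_\sigma \otimes_\R \cL_{\lk_\Delta \sigma}(\ov{\tau})$ with restriction maps induced by those of $\cL_{\lk_\Delta \sigma}$, and set $\cF(\tau) = 0$ whenever $\sigma \not\leq \tau$. Checking the axioms of Definition~\ref{defn:SimpleSheaves} amounts to transferring the corresponding axioms for $\cL_{\lk_\Delta \sigma}$: normalization at $\tau = \sigma$ uses $\cL_{\lk_\Delta \sigma}(\ov{\sigma}) = \R$; local freeness follows because $A_\tau \cong A_\sigma \otimes_\R A_{\ov{\tau}}$ and $\cL_{\lk_\Delta \sigma}(\ov{\tau})$ is free over $A_{\ov{\tau}}$; and since $\mmax_\tau$ decomposes as $\mmax_\sigma \otimes A_{\ov{\tau}} + A_\sigma \otimes \mmax_{\ov{\tau}}$, we get $\ov{\cF(\tau)} \cong \ov{\cL_{\lk_\Delta \sigma}(\ov{\tau})}$, so the local minimal extension condition transfers as well. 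Flabbiness on cones $\tau \geq \sigma$ reduces to that of $\cL_{\lk_\Delta \sigma}$, and is vacuous otherwise. By the uniqueness of simple sheaves, $\cF \cong \cL_\sigma$.

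Since $\cF$ is supported on the star of $\sigma$ and the assignment $\tau \mapsto \ov{\tau}$ is an inclusion-preserving bijection between $\starr_\Delta \sigma$ and $\lk_\Delta \sigma$, a compatible global section of $\cF$ on $\Delta$ is exactly a compatible section of $\cL_{\lk_\Delta \sigma}$ on $\lk_\Delta \sigma$. This yields a $\Z_{\geq 0}$-graded isomorphism $\cL_\sigma(\Delta) \cong A_\sigma \otimes_\R \cL_{\lk_\Delta \sigma}(\lk_\Delta \sigma)$. Reducing modulo the maximal ideal of $A$, using $\mmax = \mmax_\sigma \otimes A' + A_\sigma \otimes \mmax'$, collapses the $A_\sigma$ factor and gives $\ov{\cL_\sigma(\Delta)} \cong \ov{\cL_{\lk_\Delta \sigma}(\lk_\Delta \sigma)}$ as graded vector spaces. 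Taking Poincar\'e polynomials concludes the proof.

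The main obstacle is the careful bookkeeping with the choice of complement $V$: the ring-level identifications $A_\tau \cong A_\sigma \otimes_\R A_{\ov{\tau}}$ depend on this choice, but the final graded vector space $\ov{\cL_\sigma(\Delta)}$ is canonical, so one must verify that the constructed isomorphism respects the grading and is well-defined independently of $V$ -- or at least that different choices yield the same Hilbert series, which is all that is needed for the Poincar\'e polynomial identity.
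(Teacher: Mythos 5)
Your proof is correct and takes essentially the same route as the paper: you construct the candidate sheaf supported on the star of $\sigma$ (the paper writes it as $A_\sigma \otimes_\R i_* \cL_{\lk_\Delta\sigma}$ with $i\colon \lk_\Delta\sigma \hookrightarrow \Delta$ the inclusion), verify the axioms of Definition~\ref{defn:SimpleSheaves}, invoke uniqueness of the simple sheaf to identify it with $\cL_\sigma$, and reduce modulo $\mmax$ using the splitting $A \simeq A_\sigma \otimes_\R A_{\lk_\Delta\sigma}$. Your explicit attention to the choice of complement $V$ makes rigorous an identification the paper uses implicitly, and your closing remark correctly notes that the resulting Hilbert series is independent of that choice.
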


\begin{proof}
Let $i\colon \lk_\Delta\sigma \to \Delta$ be the injective map that sends $\ov{\tau}$ to $\tau$.
This is a continuous map.
Thus the direct image sheaf $i_* \cL_{\lk_\Delta\sigma}$ 
is a sheaf of 
$\Z_{\geq 0}$-graded
$i_* \cA_{\lk_\Delta\sigma}$-modules
on $\Delta$.

Recall that $A_\sigma$ is the $\Z_{\geq 0}$-graded ring of polynomial functions on $\Span(\sigma)$.
Thus 
$A_{\sigma}\otimes_\R i_* \cL_{\lk_\Delta\sigma}$ 
is 
a $\Z_{\geq 0}$-graded sheaf on $\Delta$.
While this is a sheaf of 
$A_{\sigma}
\otimes_\R 
i_* \cA_{\lk_\Delta\sigma}$-modules,
we can consider it as a sheaf of $\cA_{\Delta}$-modules:
For a cone $\tau$,
the stalk 
$(A_{\sigma}\otimes_\R i_* \cL_{\lk_\Delta\sigma})(\tau)$ is nonzero precisely 
when $\tau \geq \sigma$.
For such a cone $\tau$, we have
\[
(A_{\sigma} \otimes_\R 
i_* \cA_{\lk_\Delta\sigma})(\tau)
=
A_\sigma \otimes_\R A_{\ov{\tau}} \simeq A_\tau
=\cA_{\Delta}(\tau),
\]
where 
$A_{\ov{\tau}}$ is the graded ring of polynomial functions on $\Span(\tau)/\Span(\sigma)$.

We prove that 
$A_{\sigma}\otimes_\R i_* \cL_{\lk_\Delta\sigma}$ is the simple sheaf based at $\sigma$
by 
checking the conditions in Definition~\ref{defn:SimpleSheaves}:
\begin{itemize}
\item At $\sigma$, 
we have 
$(A_{\sigma}\otimes_\R i_* \cL_{\lk_\Delta\sigma})(\sigma) = A_{\sigma}\otimes_\R \R \simeq A_\sigma$.

\item For each $\tau \geq \sigma$, 
since $\cL_{\lk_\Delta\sigma}(\ov{\tau})$ is a free 
$A_{\ov{\tau}}$-module,
the stalk
$(A_{\sigma}\otimes_\R i_* \cL_{\lk_\Delta\sigma})(\tau)$ 
is a free $A_\sigma\otimes_\R A_{\ov{\tau}} \simeq A_\tau$-module.

\item 
The restriction map of $A_{\sigma}\otimes_\R i_* \cL_{\lk_\Delta\sigma}$ 
is the tensor product of the identity map on $A_{\sigma}$ and the restriction map 
of $i_*\cL_{\lk_\Delta\sigma}$.
Given a cone $\tau > \sigma$,
since $A_\tau \simeq A_\sigma\otimes_\R A_{\ov{\tau}}$,
the map induced by the restriction map of $A_{\sigma}\otimes_\R i_* \cL_{\lk_\Delta\sigma}$
is given by 
\[
\Big(
\ov{\id}\colon \ov{A_\sigma} \to \ov{A_\sigma} 
\Big)
\otimes_\R 
\Big(
\ov{\res}\colon 
\ov{\cL_{\lk_\Delta \sigma}(\ov{\tau})}
\to 
\ov{\cL_{\lk_\Delta \sigma}(\partial \ov{\tau})}
\Big).
\]
Since both $\ov{\id}$ and $\ov{\res}$ are isomorphisms of graded vector spaces,
their tensor product is also an isomorphism of graded vector spaces. 
\end{itemize}

As a result, 
we have
$\cL_\sigma(\Delta) \simeq (A_{\sigma}\otimes_\R i_* \cL_{\lk_\Delta\sigma})(\Delta)$.
Now, 
note that 
$A_\Delta \simeq A_\sigma \otimes_\R A_{\lk_\Delta \sigma}$,
where $A_\Delta$ and $A_{\lk_\Delta \sigma}$ are the rings of global polynomial functions on $\R^d$ and $\R^d/\Span(\sigma)$, respectively.
Hence we have 
\[
\ov{
(A_{\sigma}\otimes_\R i_* \cL_{\lk_\Delta\sigma})(\Delta)} 
\simeq 
\R \otimes_\R \ov{i_* \cL_{\lk_\Delta\sigma}(\Delta)}
\simeq 
\ov{\cL_{\lk_\Delta\sigma}(\lk_\Delta\sigma)},
\]
and the result follows.
\end{proof}

Given a fan subdivision $\pi\colon \Sigma\to \Delta$ and a cone $\sigma$ in $\Sigma$,
we naturally have a map $\pi_\sigma\colon \lk_\Sigma \sigma\to \lk_\Delta \pi(\sigma)$ that sends $\ov{\gamma}$ to $\ov{\pi(\gamma)}$.
Furthermore, this map
is a proper fan morphism
induced by the linear map $P_\sigma \colon \R^d / \Span(\sigma) \to \R^d / \Span(\pi(\sigma))$
\cite[Exercise~3.4.10]{CLS11}.

\begin{prop}\label{p:hpolyPoincar\'e}
The following are true:
\begin{enumerate}
\item \label{item:statementonehpolypoincare}
For a cone $\sigma$ in a quasi-convex fan $\Delta$, we have 
$
P(\ov{\cL_\sigma(\Delta)}; t) = h(\lk_\Delta \sigma; t)$.
In particular, we have 
$
P(\ov{\cL_\Delta(\Delta)}; t) = h(\Delta; t)$.

\item \label{item:localhPoincar\'e}
For a fan subdivision $\pi\colon \Sigma\to \langle \tau \rangle$, where $\tau$ is a cone, and a cone $\sigma \in \Sigma$, we have
\[
L(\pi_* \cL_\sigma, \tau; t) =
\ell^h_{\lk_{\langle \tau \rangle} \pi(\sigma)}
(\lk_{\Sigma} \sigma; t),
\]
the local $h$-polynomial of the proper fan morphism 
$\pi_\sigma \colon \lk_{\Sigma}  \sigma \to \lk_{\langle \tau \rangle} \pi(\sigma)$.
\end{enumerate}
\end{prop}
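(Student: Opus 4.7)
The plan is to handle the two statements in order, reducing (1) by Lemma~\ref{lem:SimpleSheafAndMinimalExtensionSheaf} to a classical identification and then deducing (2) from (1) by matching two decompositions of the same Poincar\'e polynomial. Specifically, Lemma~\ref{lem:SimpleSheafAndMinimalExtensionSheaf} gives $P(\ov{\cL_\sigma(\Delta)};t) = P(\ov{\cL_{\lk_\Delta\sigma}(\lk_\Delta\sigma)};t)$, so statement (1) reduces to the ``in particular'' case $P(\ov{\cL_\Delta(\Delta)};t) = h(\Delta;t)$ for a quasi-convex fan $\Delta$. This is the main theorem of combinatorial intersection cohomology, which I would either cite from \cite{BBFK02, BL03} or reprove by induction on the number of cones: by Proposition~\ref{p:quasiconvex} the module $\cL_\Delta(\Delta)$ is free over $A$, so \eqref{eq:freeimplieshilbpoin} converts Hilbert to Poincar\'e series; the local minimal extension condition gives $\ov{\cL_\Delta(\sigma)} \simeq \ov{\cL_\Delta(\partial\sigma)}$ and inductively yields $P(\ov{\cL_\Delta(\sigma)};t) = g(\langle\sigma\rangle;t)$; and flabbiness assembles these stalks into the global Hilbert series, which matches the recursion defining $h(\Delta;t)$.

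For (2), apply the Poincar\'e recursion \eqref{eq:Poincar\'erecursion} to the pure sheaf $\pi_*\cL_\sigma$ on $\langle\gamma\rangle \subseteq \langle\tau\rangle$ for each $\gamma \leq \tau$:
\[
P(\ov{\pi_*\cL_\sigma(\gamma)};t) = \sum_{\gamma' \leq \gamma} L(\pi_*\cL_\sigma, \gamma'; t) \cdot P(\ov{\cL_{\gamma'}(\langle\gamma\rangle)};t).
\]
Only indices $\gamma' \geq \pi(\sigma)$ contribute, since $\cL_\sigma$ vanishes off cones containing $\sigma$. By (1) and Lemma~\ref{lem:SimpleSheafAndMinimalExtensionSheaf}, together with the identification $\lk_{\pi^{-1}(\langle\gamma\rangle)}\sigma = \pi_\sigma^{-1}(\langle\ov{\gamma}\rangle)$ (where $\ov{\gamma}$ denotes the image of $\gamma$ in $\lk_{\langle\tau\rangle}\pi(\sigma)$), the left-hand side equals $h(\pi_\sigma^{-1}(\langle\ov{\gamma}\rangle);t)$, and each factor $P(\ov{\cL_{\gamma'}(\langle\gamma\rangle)};t)$ equals $g(\lk_{\langle\gamma\rangle}\gamma';t)$. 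In parallel, Proposition~\ref{p:hPolyGeneralDecompTheorem} applied to the proper fan morphism $\pi_\sigma$ restricted over $\langle\ov{\gamma}\rangle$ produces
\[
h(\pi_\sigma^{-1}(\langle\ov{\gamma}\rangle);t) = \sum_{\pi(\sigma) \leq \gamma' \leq \gamma} \ell^h_{\langle\ov{\gamma'}\rangle}(\pi_\sigma^{-1}(\langle\ov{\gamma'}\rangle);t) \cdot g(\lk_{\langle\gamma\rangle}\gamma';t).
\]
A strong induction on $\dim(\gamma) - \dim(\pi(\sigma))$ then identifies coefficients: since $g(\lk_{\langle\gamma\rangle}\gamma;t) = 1$, once the inductive hypothesis eliminates the proper-face contributions from both sides, the leading terms must agree, giving $L(\pi_*\cL_\sigma,\gamma;t) = \ell^h_{\langle\ov{\gamma}\rangle}(\pi_\sigma^{-1}(\langle\ov{\gamma}\rangle);t)$. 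Setting $\gamma = \tau$ proves (2).

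The principal obstacle is bookkeeping in (2). One must verify the base case $\gamma = \pi(\sigma)$ directly, where $\pi_*\cL_\sigma(\partial\pi(\sigma)) = 0$ because any cone of $\Sigma$ containing $\sigma$ necessarily maps to a cone containing $\pi(\sigma)$ and hence cannot lie over $\partial\pi(\sigma)$; consequently $L(\pi_*\cL_\sigma,\pi(\sigma);t)$ equals $h(\pi_\sigma^{-1}(\{\ov{0}\});t)$, which coincides with $\ell^h_{\{\ov{0}\}}(\pi_\sigma^{-1}(\{\ov{0}\});t)$. One must also check the identification $\lk_{\pi^{-1}(\langle\gamma\rangle)}\sigma = \pi_\sigma^{-1}(\langle\ov{\gamma}\rangle)$ in the quotient $\R^d/\Span(\sigma)$. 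Once these compatibilities are in place, the inductive step is purely formal.
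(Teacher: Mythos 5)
Your proposal takes essentially the same route as the paper: reduce (1) to the case $\sigma = o$ via Lemma~\ref{lem:SimpleSheafAndMinimalExtensionSheaf}, then prove (2) by comparing the Poincar\'e recursion \eqref{eq:Poincar\'erecursion} for $\pi_*\cL_\sigma$ against the $h$-polynomial decomposition \eqref{eq:decompforh} for the link morphism $\pi_\sigma$, and inducting on the dimension of the target cone; your level-by-level coefficient matching is just the paper's top-level subtraction unwound, and your base-case analysis at $\gamma=\pi(\sigma)$ is correct. One caveat: the alternative ``reprove by induction'' you sketch for the base identification $P(\ov{\cL_\Delta(\Delta)};t)=h(\Delta;t)$ is not actually elementary. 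Local minimal extension alone gives $\ov{\cL_\Delta(\sigma)}\simeq\ov{\cL_\Delta(\partial\sigma)}$, but it does not pin this down as $g(\langle\sigma\rangle;t)$; to identify the stalk's Poincar\'e polynomial with the toric $g$-polynomial you also need the degree bound of Corollary~\ref{c:bradenstalk}, which rests on Karu's hard Lefschetz theorem (Theorem~\ref{t:hardlef}). Citing \cite{BBFK02, BL03, Kar04}, as the paper does, is the right move.
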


\begin{proof}
Statement~\eqref{item:statementonehpolypoincare} 
for $\sigma = o$ is a known consequence of the hard Lefschetz theorem (Theorem~\ref{t:hardlef}) of Karu \cite[Section~0]{Kar04} and \cite[Theorem~5.3]{BBFK02}.
For a general simple sheaf $\cL_\sigma$ based at a cone $\sigma$,
by Lemma~\ref{lem:SimpleSheafAndMinimalExtensionSheaf} we have
\[
P(\ov{\cL_\sigma(\Delta)}; t) 
=
P(\ov{\cL_{\lk_\Delta \sigma}(\lk_\Delta \sigma)}; t).
\]
Hence the result follows from applying the case when $\sigma = o$ to the fan $\lk_\Delta \sigma$,
which is quasi-convex
by 
Proposition~\ref{p:linkofqcisqc}.

While we believe Statement~\eqref{item:localhPoincar\'e}
is known to the community (see for example \cite[p.~861]{Kar19}), 
for completeness purpose we give a proof here.
We first apply \eqref{eq:Poincar\'erecursion} to the direct image sheaf $\pi_* \cL_\sigma$ to get 
\begin{align*}
P(\ov{\cL_\sigma(\Sigma)}; t) = P(\ov{\pi_* \cL_\sigma(\langle \tau \rangle)}; t)
 & = \sum_{\gamma \in \langle \tau \rangle} L(\pi_* \cL_\sigma, \gamma; t)
  \cdot
  P(\ov{\cL_\gamma(\langle \tau \rangle)}; t)\\
& = \sum_{\substack{\gamma\in \langle \tau \rangle \\ \gamma \geq \pi(\sigma)}}
L(\pi_* \cL_\sigma, \gamma; t)
 \cdot
 P(\ov{\cL_\gamma(\langle \tau \rangle)}; t),
\end{align*}
since $\pi_* \cL_\sigma(\gamma) = 0$ unless $\gamma\geq \pi(\sigma)$.
On the other hand,
the map $\pi_\sigma \colon \lk_\Sigma \sigma \to \lk_{\langle \tau \rangle} \pi(\sigma)$ is a proper fan morphism of quasi-convex fans in their respective linear spans.
Thus \eqref{eq:decompforh} implies that 
\begin{align*}
h(\lk_\Sigma \sigma;t ) & = \sum_{\ov{\gamma} \in \lk_{\langle \tau \rangle} \pi(\sigma)}
\ell^h_{\langle \ov{\gamma}\rangle}(\pi_\sigma^{-1}({\langle \ov{\gamma}\rangle}); t)
 \cdot
h(\lk_{\lk_{\langle \tau \rangle}\pi(\sigma)} \ov{\gamma} ; t)\\
& = \sum_{\substack{\gamma\in \langle \tau \rangle \\ \gamma \geq \pi(\sigma)}}
\ell^h_{\lk_{\langle \gamma \rangle} \pi(\sigma)}(\lk_{\pi^{-1}(\langle \gamma \rangle)} \sigma; t)
 \cdot
 h(\lk_{\langle \tau \rangle} \gamma ; t),
\end{align*}
where we replace the fans in 
$\lk_\Sigma \sigma$
and 
$\lk_{\langle \tau \rangle} \pi(\sigma)$
with equal fans in $\Sigma$ and $\langle \tau \rangle$.
By an induction on $\dim(\sigma)$, 
we may assume 
$L(\pi_* \cL_\sigma, \gamma; t) =
\ell^h_{\lk_{\langle \gamma\rangle} \pi(\sigma)}
(\lk_{\pi^{-1}(\langle \gamma \rangle)} \sigma; t)$ when $\dim(\gamma) < \dim(\tau)$.
Now, 
since $P(\ov{\cL_\tau(\langle \tau\rangle)}; t) =  1 = 
h(\lk_{\langle \tau \rangle} \tau; t)$, 
we have
\begin{align*}
L(\pi_* \cL_\sigma, \tau; t) & = 
P(\ov{\cL_\sigma(\Sigma)}; t)
 -
 \sum_{\pi(\sigma) \leq \gamma < \tau}
L(\pi_* \cL_\sigma, \gamma; t)
 \cdot
 P(\ov{\cL_\gamma(\langle \tau \rangle)}; t)
 \\
 & = 
 h(\lk_\Sigma \sigma; t)
- 
\sum_{\pi(\sigma) \leq \gamma < \tau}
\ell^h_{\lk_{\langle \gamma \rangle} \pi(\sigma)}
(\lk_{\pi^{-1}(\langle \gamma\rangle)} \sigma; t)
 \cdot
 h(\lk_{\langle \tau \rangle} \gamma; t)\\
 & = 
 \ell^h_{\lk_{\langle \tau \rangle} \pi(\sigma)}
(\lk_{\Sigma} \sigma; t),
\end{align*}
where the second equality uses Statement~\eqref{item:statementonehpolypoincare}.
\end{proof}

Recall from Section~\ref{sec:polysfromfiltrations} the definition of the Hodge--Deligne polynomial.

\resmixedh

\begin{proof}
By statement~\eqref{item:lem21} in Lemma~\ref{lem:mixedpolyslimit}, the Hodge--Deligne polynomial of $\pi_* \cL_\Sigma$ is given by
\begin{align*}
E(\ov{\pi_* \cL_\Sigma(\Delta)}, F_\bullet, W^\bullet; u, v)
& = \sum_{\tau \in \Delta} v^{\dim(\tau)} L(\pi_* \cL_\Sigma, \tau; u v^{-1})
 \cdot
P(\ov{\cL_\tau(\Delta)}; uv)\\
& =\sum_{\tau \in \Delta} v^{\dim(\pi^{-1}(\langle\tau\rangle))}
\ell^h_{\langle \tau \rangle}(\pi^{-1}(\langle \tau \rangle) ; u v^{-1})
 \cdot
h(\lk_\Delta \tau; uv),
\end{align*}
where the last equality follows from Proposition~\ref{p:hpolyPoincar\'e}.
\end{proof}

\begin{remark}
The theorem can be generalized to a proper fan morphism, 
by studying the corresponding subdivision of non-pointed fans following \cite[p.~861]{Kar19}.
\end{remark}

The following results generalize 
\cite[Corollary~6.7]{KS16a} and 
\cite[Theorem~6.1]{KS16a}.

\begin{cor}\label{c:localhnonneg}
For a fan subdivision $\pi\colon \Sigma\to \langle\sigma \rangle$,
where $\sigma$ is a cone,
the local $h$-polynomial $\ell^h_{\langle \sigma \rangle}(\Sigma; t)$ has nonnegative coefficients.
If we further assume that $\pi$ is projective, 
then the coefficients of the local $h$-polynomial are
symmetric about degree $\dim(\sigma)/2$ and are
unimodal.
\end{cor}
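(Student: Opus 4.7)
The plan is to translate $\ell^h_{\langle \sigma \rangle}(\Sigma; t)$ into a local Poincar\'e polynomial of a pure sheaf, so that the decomposition theorem and the relative hard Lefschetz theorem supply the desired positivity properties. I would specialize part~(2) of Proposition~\ref{p:hpolyPoincar\'e} to the zero cone $o \in \Sigma$: since $\pi(o) = o$, $\lk_\Sigma o = \Sigma$, and $\lk_{\langle \sigma \rangle} o = \langle \sigma \rangle$, and since $\cL_o = \cL_\Sigma$, the identity there becomes
\[
\ell^h_{\langle \sigma \rangle}(\Sigma; t) = L(\pi_* \cL_\Sigma, \sigma; t).
\]

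By the decomposition theorem (Theorem~\ref{t:decomptheorem}) applied to the pure sheaf $\pi_* \cL_\Sigma$ on $\langle \sigma \rangle$, the right-hand side equals $P(K_\sigma; t)$ for some $\Z_{\geq 0}$-graded real vector space $K_\sigma$. Its coefficients, being the dimensions $\dim_\R K_\sigma^k$, are then manifestly nonnegative, which proves the first claim. For the projective case, one applies the relative hard Lefschetz theorem (Theorem~\ref{t:relhardlef}) with a relatively strictly convex function $\hat{\ell}$, obtaining isomorphisms $\hat{\ell}^{\dim(\sigma) - 2k} \colon K_\sigma^k \xrightarrow{\sim} K_\sigma^{\dim(\sigma) - k}$ for each $k \leq \dim(\sigma)/2$. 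Symmetry of the coefficients about $\dim(\sigma)/2$ is immediate from these isomorphisms. Unimodality follows by the standard argument: any $x \in K_\sigma^k$ with $k < \dim(\sigma)/2$ satisfying $\hat{\ell} x = 0$ would also satisfy $\hat{\ell}^{\dim(\sigma) - 2k} x = 0$ and hence vanish, so multiplication by $\hat{\ell}$ is an injection $K_\sigma^k \hookrightarrow K_\sigma^{k+1}$ for $k < \dim(\sigma)/2$, and symmetry then controls the upper half.

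There is no serious obstacle here: both of the heavy ingredients are deep theorems already recorded in Section~\ref{sec:puresheaves}, and the real content of the proof is the combinatorial-to-sheaf-theoretic identification in Proposition~\ref{p:hpolyPoincar\'e}(2), which the paper has already proved. The only mild subtlety is the passage from the Lefschetz isomorphisms to unimodality via injectivity of $\hat{\ell}$ on the lower half, a well-known trick in Lefschetz-type positivity arguments.
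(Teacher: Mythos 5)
Your proof is correct and takes essentially the same route as the paper: identify $\ell^h_{\langle \sigma \rangle}(\Sigma; t)$ with the local Poincar\'e polynomial $L(\pi_*\cL_\Sigma,\sigma;t) = P(K_\sigma;t)$ via Proposition~\ref{p:hpolyPoincar\'e}(2) specialized at $o \in \Sigma$, get nonnegativity for free, and invoke relative hard Lefschetz (Theorem~\ref{t:relhardlef}) for symmetry and unimodality in the projective case. The only difference is that you spell out the injectivity-of-$\hat\ell$ argument for unimodality, which the paper leaves implicit (it is already noted as a consequence in the paragraph following Theorem~\ref{t:relhardlef}).
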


\begin{proof}
By Proposition~\ref{p:hpolyPoincar\'e}
the local $h$-polynomial is equal to 
the local Poincar\'e polynomial,
hence the nonnegativity follows immediately. 
If the fan morphism is projective, 
then by 
the relative hard Lefschetz theorem
(Theorem~\ref{t:relhardlef}),
the coefficients of the local Poincar\'e polynomial 
$L(\pi_*\cL_\Sigma, \sigma; t)$
are symmetric about degree $\dim(\sigma)/2$ and are unimodal.
\end{proof}

\begin{cor}\label{c:mixedhproperties}
Let $\pi\colon \Sigma\to \Delta$ be a fan subdivision between quasi-convex fans.
Then the mixed $h$-polynomial $h_\Delta(\Sigma; u, v)$ has nonnegative coefficients.
If we further suppose that $\pi$ is projective and write $h_\Delta(\Sigma; u, v) = \sum_{i, j} h_{i, j} u^i v^j$, 
then for each $k\in \Z_{\geq 0}$
the sequence $\{h_{i, k-i}\}_{i=0, \dots, k}$ is symmetric and unimodal.
\end{cor}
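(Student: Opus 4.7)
The plan is to derive both claims from Theorem~\ref{t:mixedh} combined with the decomposition formula of Lemma~\ref{lem:mixedpolyslimit}\eqref{item:lem21}. By Theorem~\ref{t:mixedh}, the mixed $h$-polynomial coincides with the Hodge--Deligne polynomial of $(\ov{\pi_*\cL_\Sigma(\Delta)}, F_\bullet, W^\bullet)$, whose coefficients are, by Definition~\ref{defn:HodgeDelignePoly}, real dimensions of associated graded pieces. Nonnegativity of the coefficients of $h_\Delta(\Sigma; u, v)$ is therefore immediate, without any projectivity hypothesis.

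For the symmetry and unimodality, I would specialize Lemma~\ref{lem:mixedpolyslimit}\eqref{item:lem21} to $\cF = \cL_\Sigma$ (the simple sheaf based at the zero cone, with no shift) and invoke Proposition~\ref{p:hpolyPoincar\'e}\eqref{item:statementonehpolypoincare} to rewrite
\[
h_\Delta(\Sigma; u, v) = \sum_{\tau \in \Delta} v^{\dim(\tau)}\, L(\pi_*\cL_\Sigma, \tau; uv^{-1}) \cdot h(\lk_\Delta \tau; uv).
\]
Writing $L(\pi_*\cL_\Sigma, \tau; t) = \sum_j l_{\tau, j} t^j$ and $h(\lk_\Delta\tau; t) = \sum_m h_{\tau, m} t^m$, expanding the product, and collecting the coefficient of $u^i v^{k-i}$ forces $m = (k - \dim(\tau))/2$ (so only $\tau$ with $k - \dim(\tau)$ even contribute) and $j = i - m$. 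This yields the explicit formula
\[
h_{i, k-i} = \sum_{\substack{\tau \in \Delta \\ k - \dim(\tau)\text{ even}}} h_{\tau,\, (k-\dim(\tau))/2}\cdot l_{\tau,\, i - (k-\dim(\tau))/2}.
\]

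The key input is then the relative hard Lefschetz theorem (Theorem~\ref{t:relhardlef}): projectivity of $\pi$ makes $\{l_{\tau, j}\}_j$ symmetric and unimodal about $j = \dim(\tau)/2$ for every $\tau$. After the reindexing $j = i - (k-\dim(\tau))/2$, the resulting sequence in $i$ is symmetric and unimodal about $i = \dim(\tau)/2 + (k-\dim(\tau))/2 = k/2$, a center that does not depend on $\tau$. Since each scalar $h_{\tau,\, (k-\dim(\tau))/2}$ is nonnegative, multiplying preserves both properties, and a finite sum of symmetric unimodal sequences with a common center is again symmetric and unimodal about that center, giving the claim. The step that requires the most care is precisely this collapse of the $\tau$-dependent Lefschetz centers $\dim(\tau)/2$ to the single $\tau$-independent center $k/2$ after the shift; once verified, the argument needs no rationality hypothesis on $\Sigma$ or $\Delta$, generalizing \cite[Corollary~6.7]{KS16a} from rational to arbitrary projective fan subdivisions.
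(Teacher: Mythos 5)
Your proof is correct and takes essentially the same route as the paper's: nonnegativity from Theorem~\ref{t:mixedh}, then symmetry and unimodality by observing that, at a fixed total degree~$k$, the toric $h$-factor (a polynomial in $uv$) contributes only a single nonnegative scalar, so each $\sigma$-summand is a shift of the local factor, whose symmetry and unimodality about its Lefschetz center $\dim(\sigma)/2$ comes from Theorem~\ref{t:relhardlef}, and after the shift all centers coincide at $k/2$. The only stylistic difference is that you work directly with the local Poincar\'e polynomials $L(\pi_*\cL_\Sigma,\tau;t)$ from the relative hard Lefschetz theorem, whereas the paper routes the same information through the local $h$-polynomial via Corollary~\ref{c:localhnonneg} and Proposition~\ref{p:hpolyPoincar\'e}\eqref{item:localhPoincar\'e}; your version makes the center-collapsing step more explicit, which is the part the paper's terse "product of symmetric unimodal sequences" phrasing leaves to the reader.
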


\begin{proof}
The nonnegativity immediately follows from Theorem~\ref{t:mixedh}.

Suppose further that $\pi$ is projective.
It suffices to prove that 
for each $k\in \Z_{\geq 0}$,
the coefficients of the monomials of total degree $k$
in each 
$v^{\dim(\sigma)}
\ell^h_{\langle \sigma \rangle}(\pi^{-1}(\langle \sigma \rangle); u v^{-1}) \cdot h(\lk_\Delta \sigma; uv)$
form a nonnegative, symmetric and unimodal sequence.
By Proposition~\ref{p:hpolyPoincar\'e},
the toric $h$-polynomial $h(\lk_\Delta \sigma; uv)$ has nonnegative coefficients.
Since its input is $uv$, 
its coefficients are symmetric and unimodal
 at each fixed total degree.
Hence it remains to prove that $v^{\dim(\sigma)}
\ell^h_{\langle \sigma \rangle}(\pi^{-1}(\langle \sigma \rangle); u v^{-1})$
has nonnegative, symmetric and unimodal coefficients,
but that is precisely Corollary~\ref{c:localhnonneg}.
\end{proof}

\section{Multivariable polynomials from the Ehrhart sheaf}\label{sec:Ehrhart}

In this section, 
we first introduce the $h^*$-polynomial following \cite[Chapter~3]{BR15}.
We then introduce three multivariable enrichments of the $h^*$-polynomial following \cite[Sections~7--9]{KS16a}.
At the end, we define the \textit{Ehrhart sheaf} following \cite[Section~3]{Kar08}, and show that the filtrations defined in Section~\ref{sec:polysfromfiltrations} give these multivariable polynomials.

\subsection{Ehrhart theory}

A \textit{rational} cone $\sigma$ is a pointed cone in $\R^d$ that is generated by 
rational vectors.
The \textit{primitive lattice generators} of a rational cone are the first nonzero lattice points on its rays.

\begin{defn}[{\cite[Section~3]{Kar08}}]
A rational cone $\sigma$ is \emph{Gorenstein} if there exists a linear function $G_\sigma\colon \sigma \to \R$ such that 
$G_\sigma(\Z^d \cap \sigma) \subseteq \Z_{\geq 0}$ and $G_\sigma(v) = 1$ 
whenever $v$ is a primitive lattice generator of $\sigma$.
\end{defn}

If such a map $G_\sigma$ exists, 
then it is necessarily unique.

\begin{example}
One way of obtaining a Gorenstein cone is 
to take the cone over a $(d-1)$-dimensional lattice polytope in $\{x_d = 1\} \subseteq \R^d$.
Then 
the map 
$G_\sigma$ is given by 
$(x_1, \dots, x_d)\mapsto x_d$.
\end{example}

\begin{defn}
A fan $\Delta$ is \emph{Gorenstein} if each of its cones is Gorenstein.
\end{defn}

For a Gorenstein fan, 
by pasting the linear functions $G_\sigma$ of the cones together, 
we obtain a conewise linear function $G\colon |\Delta| \to \R$,
which we call the \emph{degree map}.
The degree map takes integral value at each lattice point in $|\Delta|$ and 
sends the primitive lattice generator of each ray to $1$.

We proceed to define the $h^*$-polynomial.
Given a Gorenstein fan $\Delta$ in $\R^d$, 
its \textit{Ehrhart series} is 
\[
\Ehr_\Delta(t) := \sum_{\lambda\in \Z^d \cap |\Delta|} t^{G(\lambda)}.
\]
The Ehrhart series satisfies \cite[Example~7.13]{Sta92}
\[
\Ehr_\Delta(t) = \frac{h^*(\Delta; t)}{(1-t)^{
\dim(\Delta)}},
\]
for a unique polynomial $h^*(\Delta; t)$ called the \emph{$h^*$-polynomial}.

Given a Gorenstein cone $\sigma$,
the \emph{local $h^*$-polynomial} $\ell^*(\langle \sigma\rangle ; t)$
is defined to be \cite[Definition~7.2]{KS16a}
\[
\ell^*(\langle \sigma\rangle ; t) = \sum_{\tau \in \langle \sigma\rangle} h^*(\langle\tau\rangle; t)
\cdot
(-1)^{\dim(\sigma) - \dim(\tau)}
g((\lk_{\langle \sigma\rangle} \tau)^*; t),
\]
where $(\lk_{\langle \sigma\rangle} \tau)^*$ is the \textit{dual poset} to the facet poset of $\lk_{\langle \sigma\rangle} \tau$.
The local $h^*$-polynomial was introduced by Stanley \cite[Example~7.13]{Sta92}
and
is motivated by an inclusion-exclusion formula weighted by the toric $g$-polynomial.

There is a decomposition theorem for the $h^*$-polynomial.
It was proved by Stanley \cite[Example~7.13]{Sta92} for the case when $\Delta$ is generated by a single simplicial cone and
Katz and Stapledon in \cite[Lemma~7.12]{KS16a} for the case when $\Delta$ is supported on a cone.
We prove the theorem for quasi-convex Gorenstein fans below,
making use of the case proved by Katz and Stapledon.

\begin{prop}
For a quasi-convex Gorenstein fan $\Delta$ with degree map $G$, we have
\begin{equation}\label{eq:hstardecomp}
h^*(\Delta; t)  =\sum_{\sigma\in \Delta} \ell^*(\langle \sigma \rangle; t)
\cdot
h(\lk_\Delta \sigma; t).
\end{equation}
\end{prop}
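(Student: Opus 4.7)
The plan is to bootstrap from the Katz--Stapledon case [KS16a, Lemma~7.12], which proves the statement when $\Delta = \langle \sigma \rangle$ is supported on a single cone, following the same template as the proof of Proposition~\ref{p:hPolyGeneralDecompTheorem}. The first step is to express $h^*(\Delta; t)$ as a sum indexed by cones. Partitioning lattice points of $|\Delta|$ by the unique cone in whose relative interior they lie gives
\[
h^*(\Delta; t) = (1-t)^{\dim \Delta} \Ehr_\Delta(t) = \sum_{\tau \in \Delta} (1-t)^{\dim \Delta - \dim \tau} f_\tau(t),
\]
where $f_\tau(t) := (1-t)^{\dim \tau} \sum_{\lambda \in (\relint \tau) \cap \Z^d} t^{G(\lambda)}$. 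Applying the same identity to the fan $\langle \tau \rangle$ gives $h^*(\langle \tau \rangle; t) = \sum_{\nu \leq \tau} (1-t)^{\dim \tau - \dim \nu} f_\nu(t)$. M\"obius inversion on the Eulerian face lattice of $\tau$ (whose M\"obius function is $(-1)^{\dim \tau - \dim \nu}$) then yields
\[
f_\tau(t) = \sum_{\nu \leq \tau} (t-1)^{\dim \tau - \dim \nu} h^*(\langle \nu \rangle; t).
\]

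Substituting and swapping order of summation produces
\[
h^*(\Delta; t) = \sum_{\nu \in \Delta} h^*(\langle \nu \rangle; t) \cdot (1-t)^{\dim \Delta - \dim \nu} \sum_{\substack{\tau \in \Delta \\ \tau \geq \nu}} (-1)^{\dim \tau - \dim \nu}.
\]
Now apply the Katz--Stapledon case $h^*(\langle \nu \rangle; t) = \sum_{\mu \leq \nu} \ell^*(\langle \mu \rangle; t) \cdot h(\lk_{\langle \nu \rangle} \mu; t)$ and reorganize so that the outermost summation is over $\mu \in \Delta$. Writing $\Omega := \lk_\Delta \mu$ and identifying each $\nu \geq \mu$ with the corresponding $\ov{\nu} \in \Omega$ (so that $\lk_{\langle \nu \rangle} \mu = \langle \ov{\nu} \rangle$ and $\lk_\Delta \nu = \lk_\Omega \ov{\nu}$), the coefficient of $\ell^*(\langle \mu \rangle; t)$ becomes
\[
A_\mu(t) := \sum_{\ov{\nu} \in \Omega} h(\langle \ov{\nu} \rangle; t) (1-t)^{\dim \Omega - \dim \ov{\nu}} \sum_{\substack{\ov{\tau} \in \Omega \\ \ov{\tau} \geq \ov{\nu}}} (-1)^{\dim \ov{\tau} - \dim \ov{\nu}}.
\]

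The crux of the argument is to verify that $A_\mu(t) = h(\Omega; t)$ for any quasi-convex fan $\Omega$; the required quasi-convexity of $\lk_\Delta \mu$ is supplied by Proposition~\ref{p:linkofqcisqc}. To establish this identity, swap the two sums so that $\ov{\tau} \in \Omega$ is outermost, factor out $(1-t)^{\dim \Omega - \dim \ov{\tau}}$, and use $h(\langle \ov{\nu} \rangle; t) = g(\langle \ov{\nu} \rangle; t)$ to recognize the inner sum, via the defining identity of $h$ on the single-cone fan $\langle \ov{\tau} \rangle$, as
\[
\sum_{\ov{\nu} \leq \ov{\tau}} g(\langle \ov{\nu} \rangle; t) (t-1)^{\dim \ov{\tau} - \dim \ov{\nu}} = t^{\dim \ov{\tau}} h(\langle \ov{\tau} \rangle; 1/t).
\]
One is then left with
\[
A_\mu(t) = \sum_{\ov{\tau} \in \Omega} (1-t)^{\dim \Omega - \dim \ov{\tau}} t^{\dim \ov{\tau}} h(\langle \ov{\tau} \rangle; 1/t),
\]
and the final equality $A_\mu(t) = h(\Omega; t)$ is the defining identity of $h(\Omega; t)$ rewritten under the substitution $t \leftrightarrow 1/t$ followed by clearing denominators. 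The main obstacle is tracking the signs and the Eulerian combinatorics through this cascade of identities; beyond the defining relations of the toric $g$- and $h$-polynomials and the Katz--Stapledon base case, no new ingredient is required.
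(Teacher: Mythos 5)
Your proof is correct, and it has the same overall shape as the paper's --- decompose $h^*(\Delta;t)$ over cones, substitute the one-cone decomposition theorem of Katz--Stapledon [KS16a, Lemma~7.12], swap sums, and identify the coefficient of $\ell^*(\langle\mu\rangle;t)$ --- but it takes a genuinely different route in two places. Where the paper applies Ehrhart reciprocity to land directly on $t^{\dim\Delta}h^*(\Delta;t^{-1}) = \sum_{\tau\in\Delta} h^*(\langle\tau\rangle;t)(t-1)^{\dim\Delta-\dim\tau}$, you instead M\"obius-invert (after dividing by the appropriate powers of $1-t$) the relative-interior partition of the Ehrhart series; the information content is essentially the same, but your version is purely formal inclusion--exclusion on an Eulerian poset rather than an appeal to [Mac71]. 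And where the paper uses the symmetry $\ell^*(\langle\sigma\rangle;t) = t^{\dim\sigma}\ell^*(\langle\sigma\rangle;t^{-1})$ from [KS16a, Lemma~7.3] to undo the $t\mapsto t^{-1}$ substitution at the end, you avoid citing that fact, at the cost of verifying the auxiliary coefficient identity $A_\mu(t) = h(\lk_\Delta\mu;t)$ by a separate double-sum manipulation. That verification is correct: one application of the defining recursion for the toric $g$-polynomial on $\langle\ov{\tau}\rangle$, followed by the defining recursion for $h(\lk_\Delta\mu;t)$ after $t\leftrightarrow t^{-1}$, with the quasi-convexity of $\lk_\Delta\mu$ (Proposition~\ref{p:linkofqcisqc}) correctly invoked so that $h(\lk_\Delta\mu;t)$ is even defined. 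In short, the paper's argument is shorter because it outsources to reciprocity and the symmetry lemma; yours is slightly longer but self-contained modulo the single black box [KS16a, Lemma~7.12].
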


\begin{proof}
Given a cone $\tau \in \Delta$, 
we write $\tau^\circ$ for its relative interior
and let
\[
\Ehr_{\tau^\circ}(t) := 
\sum_{\lambda\in \Z^d \cap \tau^\circ} t^{G(\lambda)}.
\]
Since every lattice point in $|\Delta|$ is contained in the relative interior of exactly one cone in $\Delta$,
the Ehrhart series of $\Delta$ satisfies
\begin{align*}
\Ehr_\Delta(t^{-1}) 
& = 
\sum_{\tau\in \Delta} 
\Ehr_{\tau^\circ}(t^{-1})
\\
& = 
\sum_{\tau\in \Delta} 
(-1)^{\dim(\tau)}
\Ehr_{\langle \tau \rangle}(t),
\end{align*}
where 
the last equality is 
\textit{Ehrhart reciprocity} \cite[Proposition~4.1]{Mac71}.
Thus we have
\[
\frac{h^*(\Delta; t^{-1})}{(1-t^{-1})^{\dim(\Delta)}}
=
\sum_{\tau\in \Delta} 
(-1)^{\dim(\tau)}
\frac{h^*(\langle \tau \rangle; t)}{(1-t)^{\dim(\tau)}},
\]
or 
\begin{equation}
t^{\dim(\Delta)} 
h^*(\Delta; t^{-1})
=
\sum_{\tau\in \Delta} 
h^*(\langle \tau \rangle; t)
\cdot
(t-1)^{\dim(\Delta) - \dim(\tau)}.
\end{equation}
From this point onward, 
the proof is similar to that of 
Proposition~\ref{p:hPolyGeneralDecompTheorem}:
Using 
the theorem 
for the fan $\langle \tau \rangle$ 
as proved in \cite[Lemma~7.12]{KS16a},
we have
\begin{align*}
t^{\dim(\Delta)}h^*(\Delta; t^{-1}) 
& = 
\sum_{\tau\in \Delta}
h^*(\langle \tau \rangle; t)
\cdot 
(t - 1)^{\dim(\Delta) -\dim(\tau) }\\
& = 
\sum_{\tau\in \Delta}
\bigg(
\sum_{\sigma \in \langle \tau \rangle}
\ell^*(\langle \sigma \rangle; t) 
\cdot 
h(\lk_{\langle \tau \rangle} \sigma; t)
\bigg)
\cdot 
(t - 1)^{\dim(\Delta) -\dim(\tau) }.
\end{align*}
Since 
$\lk_{\langle \tau \rangle} \sigma$
is equal to the fan $\langle \ov{\tau} \rangle$ in $\lk_\Delta \sigma$, 
we have 
$h(\lk_{\langle \tau \rangle} \sigma; t)=
h(\langle \ov{\tau} \rangle; t) = 
g(\langle \ov{\tau} \rangle; t)$.
Hence by switching the order of summation 
we have
\begin{align*}
t^{\dim(\Delta)} h^*(\Delta; t^{-1}) 
& = 
\sum_{\sigma \in \Delta}
\sum_{\ov{\tau} \in \lk_\Delta \sigma}
\ell^*(\langle \sigma\rangle; t) 
\cdot 
g(\langle \ov{\tau} \rangle; t)
\cdot 
(t - 1)^{\dim(\lk_\Delta \sigma) -\dim(\ov{\tau}) }
\\
& = 
\sum_{\sigma \in \Delta}
\ell^*(\langle \sigma\rangle; t) 
\cdot 
\bigg(
\sum_{\ov{\tau} \in \lk_\Delta \sigma}
g(\langle \ov{\tau} \rangle; t)
\cdot 
(t - 1)^{\dim(\lk_\Delta \sigma) -\dim(\ov{\tau}) }
\bigg)
.
\end{align*}
Now, 
by definition, 
the toric $h$-polynomial of the fan $\lk_\Delta \sigma$
satisfies
\[
t^{\dim(\Delta) - \dim(\sigma)}h(\lk_\Delta \sigma; t^{-1})
=
\sum_{\ov{\tau} \in \lk_\Delta \sigma}
g(\langle \ov{\tau} \rangle; t)
\cdot 
(t - 1)^{\dim(\lk_\Delta \sigma) -\dim(\ov{\tau}) }
.
\]
Furthermore,
the local $h^*$-polynomial is symmetric \cite[Lemma~7.3]{KS16a}:
\begin{align*}
\ell^*(\langle \sigma\rangle; t) 
&=
t^{\dim(\sigma)}
\ell^*(\langle \sigma\rangle; t^{-1})
.
\end{align*}
Thus we have
\[
t^{\dim(\Delta)}h^*(\Delta; t^{-1}) 
=
t^{\dim(\Delta)}
\sum_{\sigma \in \Delta}
\ell^*(\langle \sigma\rangle; t^{-1})
\cdot 
h(\lk_\Delta \sigma; t^{-1}).
\]
The result then follows from 
canceling 
$t^{\dim(\Delta)}$
and
replacing $t^{-1}$ by $t$.
\end{proof}

Using the decomposition theorem for the $h^*$-polynomial,
we define the following 
multivariable polynomials.

\begin{defn}[{\cite[Definition~7.5]{KS16a}}]
Let $\Delta$ be a quasi-convex Gorenstein fan.
The \emph{mixed $h^*$-polynomial} of $\Delta$ is defined to be
\[
h^*(\Delta; u, v) = \sum_{\sigma \in \Delta} v^{\dim(\sigma)}\ell^*(\langle \sigma \rangle; u v^{-1}) \cdot
h(\lk_\Delta \sigma; uv).
\]
\end{defn}

Two Gorenstein fans $\Sigma$ and $\Delta$ are said to have \textit{equal degree map} if 
$|\Sigma| = |\Delta|$ and the degree map of $\Sigma$ is equal to that of $\Delta$.

\begin{defn}[{\cite[Definition~8.2]{KS16a}}]
Let $\pi\colon \Sigma\to \Delta$ be a fan subdivision between quasi-convex Gorenstein fans with equal degree map.
The \emph{limit mixed $h^*$-polynomial} of the pair $(\Delta, \Sigma)$ is defined to be
\[
h^*(\Delta, \Sigma; u, v) = \sum_{\sigma\in \Sigma}
v^{\dim(\sigma)} \ell^*(\langle \sigma \rangle; u v^{-1})
\cdot
 h(\lk_\Sigma \sigma; uv).
\]
\end{defn}

Given a fan subdivision $\pi\colon \Sigma\to \langle \sigma \rangle$ between Gorenstein fans with equal degree map,
the \emph{local limit mixed $h^*$-polynomial} of $(\langle \sigma \rangle, \Sigma)$ is defined to be
\cite[Definition~8.2]{KS16a}
\[
\ell^*(\langle \sigma\rangle, \Sigma; u, v) = \sum_{\tau\in \Sigma}
 \ell^*(\langle \tau\rangle; u v^{-1})
 \cdot
 \ell^h_{\lk_{\langle \sigma\rangle} \pi(\tau)} (\lk_\Sigma \tau; uv),
\]
where $\ell^h_{\lk_{\langle\sigma\rangle} \pi(\tau)}(\lk_\Sigma \tau; uv)$ is the local $h$-polynomial of the proper fan morphism
$\pi_\tau \colon \lk_\Sigma \tau \to \lk_{\langle \sigma \rangle} \pi(\tau)$; 
see Section~\ref{sec:minextsheaf}.

\begin{defn}[{\cite[Definition~9.1]{KS16a}}]\label{defn:refinedlimit}
Let $\pi\colon \Sigma\to \Delta$ be a fan subdivision between quasi-convex Gorenstein fans with equal degree map.
The \emph{refined limit mixed $h^*$-polynomial} of the pair $(\Delta, \Sigma)$ is defined to be
\[
h^*(\Delta, \Sigma; u, v, w) = \sum_{\sigma\in \Delta}
w^{\dim(\sigma)}
\ell^*(\langle \sigma\rangle, \pi^{-1}(\langle \sigma\rangle); u, v)
\cdot
h(\lk_\Delta \sigma; u v w^2).
\]
\end{defn}

We have the following commutative diagram showing specializations \cite[Theorem~9.2]{KS16a}:
\[
\begin{tikzcd}
h^*(\Delta, \Sigma; u, v, w) \arrow[r, mapsto]{}{\substack{u\mapsto u w^{-1}\\ v\mapsto 1}}
\arrow[d, mapsto]{}{\substack{w\mapsto 1}}
& h^*(\Delta; u, w) \arrow[d, mapsto]{}{\substack{w\mapsto 1}}\\
h^*(\Delta, \Sigma; u, v)\arrow[r, mapsto]{}{\substack{v\mapsto 1}}  & h^*(\Delta; u).
\end{tikzcd}
\]

\subsection{The Ehrhart sheaf}

In this section, 
we define the Ehrhart sheaf, 
first for a simplicial Gorenstein fan 
and 
then for a general Gorenstein fan.
We note that our definition of the Ehrhart sheaf is slightly different than the one in \cite[Section~3]{Kar08}.

\begin{defn}[{\cite[Section~3]{Kar08}}]\label{defn:GradedEhrhartSheaf}
Let $\Delta$ be a simplicial Gorenstein fan with degree map $G$ in $\R^d$.
We define the \emph{Ehrhart sheaf} $\cE_\Delta$, a sheaf of $\Z_{\geq 0}$-graded $\cA$-modules, as follows.
\begin{itemize}
\item For a cone $\sigma$, 
let $\cE_\Delta(\sigma)$ be the vector space with basis elements
$\{x^\lambda : \lambda\in \sigma \cap \Z^d\}$, 
equipped with the following $A_\sigma$-action $A_\sigma \times \cE_\Delta(\sigma) \to \cE_\Delta(\sigma)$:
for any linear function $L$ on $\Span(\sigma)$, we set
\[
L\cdot x^\lambda = \sum_{i=1}^{\dim(\sigma)} L(v_i) x^{v_i + \lambda},
\]
where $v_1, \dots, v_{\dim(\sigma)}$ are the primitive lattice generators of $\sigma$.
Since $A_\sigma$ is generated by linear functions on $\Span(\sigma)$,
this defines an $A_\sigma$-action.

\item 
For the grading, 
the element $x^\lambda \in \cE_\Delta(\sigma)$ 
is defined to be homogeneous of degree $G(\lambda) \in \Z_{\geq 0}$.

\item For cones $\sigma\geq \tau$, the restriction map $\res_{\sigma, \tau}\colon \cE_\Delta(\sigma) \to \cE_\Delta(\tau)$ is the graded linear map defined by
\[
\res_{\sigma, \tau}(x^\lambda) =
\begin{cases}
x^\lambda & \text{if } \lambda\in \tau \cap  \Z^d, \\
0 & \text{otherwise.}
\end{cases}
\]
\end{itemize}
\end{defn}

This indeed defines a sheaf of $\Z_{\geq 0}$-graded $\cA$-modules,
since 
the restriction maps of $\cE_\Delta$ are compatible with that of $\cA$:
for cones $\sigma\geq \tau$ and a linear function $L$ on $\Span(\sigma)$,
we have
\[
\res_{\sigma, \tau}(L\cdot x^\lambda)
=
\res_{\sigma, \tau}\Big(\sum_{i=1}^{\dim(\sigma)} L(v_i) x^{v_i + \lambda} \Big)
=
\sum_{j=1}^{\dim(\tau)} L(v_{i_j}) x^{v_{i_j} + \lambda} = L|_{\Span(\tau)}\cdot \res_{\sigma, \tau}(x^\lambda),
\]
where $v_{i_1}, \dots, v_{i_{\dim(\tau)}}$ are the primitive lattice generators of $\tau$.

\begin{lem}[{\cite[Section~3]{Kar08}}]
Let $\Delta$ be a simplicial Gorenstein fan.
Then the Ehrhart sheaf $\cE_\Delta$ is a pure sheaf.
\end{lem}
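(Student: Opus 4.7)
The plan is to verify the two defining conditions of a pure sheaf separately: for each cone $\sigma \in \Delta$, the stalk $\cE_\Delta(\sigma)$ is a finitely generated free $A_\sigma$-module, and $\cE_\Delta$ is flabby. Both reductions rely essentially on the simpliciality of $\Delta$.

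For the freeness statement, the key step is to choose the right basis of $A_\sigma$. Let $v_1, \dots, v_n$ be the primitive lattice generators of $\sigma$, where $n = \dim(\sigma)$, and let $L_1, \dots, L_n$ be the dual basis of $\Span(\sigma)^*$ characterized by $L_i(v_j) = \delta_{ij}$, so $A_\sigma = \R[L_1, \dots, L_n]$. A direct application of the defining action gives $L_i \cdot x^\lambda = x^{v_i + \lambda}$, and hence $L_1^{a_1}\cdots L_n^{a_n} \cdot x^\mu = x^{\mu + \sum a_i v_i}$. Since $\sigma$ is simplicial, every lattice point $\lambda \in \sigma \cap \Z^d$ admits a unique decomposition $\lambda = \mu + \sum a_i v_i$ with $a_i \in \Z_{\geq 0}$ and $\mu$ in the half-open fundamental parallelepiped $\BBox(\sigma)$. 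This yields a direct sum decomposition
\[
\cE_\Delta(\sigma) = \bigoplus_{\mu \in \BBox(\sigma)} A_\sigma \cdot x^\mu,
\]
exhibiting $\{x^\mu\}_{\mu \in \BBox(\sigma)}$ as a free homogeneous $A_\sigma$-basis, with $x^\mu$ in degree $G(\mu)$. Finite generation is immediate because $\BBox(\sigma)$ is finite.

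For flabbiness, I would reduce by the usual argument to showing surjectivity of $\res\colon \cE_\Delta(\sigma) \to \cE_\Delta(\partial \sigma)$ for each cone $\sigma$. An element of $\cE_\Delta(\partial \sigma)$ is a compatible family $(f_\tau)_{\tau \in \partial \sigma}$ of finite sums $f_\tau = \sum_\lambda c_{\tau, \lambda} x^\lambda$ with $c_{\tau_1, \lambda} = c_{\tau_2, \lambda}$ whenever $\lambda \in \tau_1 \cap \tau_2$. Since $\partial \sigma$ has finitely many cones and each $f_\tau$ has finite support, the shared coefficients assemble into a single function $c$ on a finite subset of $|\partial \sigma| \cap \Z^d$, and $\sum_\lambda c_\lambda x^\lambda \in \cE_\Delta(\sigma)$ restricts to the prescribed family.

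The main obstacle, though not technically hard, is the preliminary verification that $\cE_\Delta$ is actually a sheaf of $\cA$-modules, since the action of $A_\sigma$ on $\cE_\Delta(\sigma)$ has been specified only on linear forms and in terms of the chosen generators $v_i$. One checks that the formula extends to a well-defined commutative ring action via the symmetric identity
\[
L \cdot (M \cdot x^\lambda) = \sum_{i,j} L(v_i) M(v_j) x^{v_i + v_j + \lambda},
\]
and that the restriction maps are $\cA$-linear by observing that when $\tau \leq \sigma$ the primitive generators of $\tau$ are exactly those $v_i$ lying in $\tau$; for $\lambda \in \tau$, any term $x^{v_i + \lambda}$ with $v_i \notin \tau$ lies outside $\tau$ and is killed by $\res_{\sigma, \tau}$. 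Once these compatibility checks are in place, the two conditions above dispose of the lemma.
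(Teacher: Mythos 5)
Your proposal is correct and follows essentially the same route as the paper: freeness via the half-open fundamental parallelepiped $\BBox(\sigma)$ decomposition of $\sigma \cap \Z^d$, and flabbiness via the observation that every lattice point in $|\partial\sigma|$ already lies in $\sigma$, so every monomial $x^\lambda$ in $\cE_\Delta(\partial\sigma)$ lifts. The added preliminary check that the module action is well-defined and that the restriction maps are $\cA$-linear duplicates material the paper places just before the lemma statement rather than in its proof; your dual-basis computation $L_i\cdot x^\lambda = x^{v_i+\lambda}$ is a clean way to make the free-module isomorphism explicit, but it is the same argument.
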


\begin{proof}
We show the flabbiness of $\cE_\Delta$.
It suffices to prove the surjectivity of the restriction map $\res\colon \cE_\Delta(\sigma) \to \cE_\Delta(\partial \sigma)$ whenever $\sigma$ is a cone.
By construction, 
the underlying vector space of $\cE_\Delta(\partial \sigma)$ has basis elements 
$\{x^\lambda : \lambda\in |\partial \sigma| \cap \Z^d\}$.
Since $\lambda\in |\partial \sigma| \cap \Z^d$ implies $\lambda \in \sigma \cap \Z^d$ and for such $\lambda$ we have $\res(x^\lambda) = x^\lambda$, 
the restriction map is surjective.

Now we show that each $\cE_\Delta(\sigma)$ is a finitely generated free $A_\sigma$-module.
Let $\BBox(\sigma) := [0, 1)v_1 + \dots + [0, 1)v_{\dim(\sigma)} \subseteq \R^d$,
where 
$v_1, \dots, v_{\dim(\sigma)}$
are the 
primitive lattice generators of $\sigma$.
Since $\sigma$ is simplicial, a lattice point $\lambda$ in $\sigma$ can be uniquely written as
\[
\lambda = b_\lambda + \sum_{i=1}^{\dim(\sigma)} a_i v_i
\]
for some $b_\lambda \in \BBox(\sigma)\cap  \Z^d$ and $a_i \in \Z_{\geq 0}$.
Hence $\cE_\Delta(\sigma)$ is isomorphic to the finitely generated free $A_\sigma$-module $\oplus_{\lambda\in \BBox(\sigma) \cap \Z^d} A_\sigma x^{\lambda}$.
\end{proof}

We now define the Ehrhart sheaf for a non-simplicial Gorenstein fan.

\begin{defn}[{\cite[Section~3]{Kar08}}]\label{defn:NonSimplicialEhrhartSheaf}
Let $\Delta$ be a non-simplicial Gorenstein fan.
We pick a simplicial Gorenstein refinement $\tilde{\Delta}$ of $\Delta$ 
that has degree map equal to that of $\Delta$.
Let $\pi\colon \tilde{\Delta}\to \Delta$ be the corresponding fan subdivision.
The Ehrhart sheaf $\cE_\Delta$ of $\Delta$ is then defined to be $\pi_* \cE_{\tilde{\Delta}}$.
\end{defn}

Such a Gorenstein fan $\tilde{\Delta}$ always exists, 
since every fan admits a simplicial refinement without adding new rays \cite[Section~2.2.1]{DLRS10}.
Since no new rays are added, 
the degree map $G_\Delta$ of $\Delta$ 
sends the primitive lattice generator of each ray in $\tilde{\Delta}$ to $1$.
As a result, each cone $\sigma$ in $\tilde{\Delta}$ is Gorenstein with $G_\sigma = G_\Delta|_{\sigma}$.
Hence $\tilde{\Delta}$ is a Gorenstein fan and 
its degree map is equal to that of $\Delta$.

The Ehrhart sheaf $\cE_\Delta$ of a non-simplicial fan $\Delta$ is also a pure sheaf, since it is the direct image sheaf of a pure sheaf under a fan subdivision. 
We note that the Ehrhart sheaf does not depend on the choice of a refinement,
since any two refinements give pure sheaves that
admit the same decomposition into shifts of simple sheaves \cite[p.~145]{Kar08}.

\begin{prop}\label{p:Ehrhartcombin}
The following are true:
\begin{enumerate}
\item \label{item:Ehrhartcombin1}
For a quasi-convex Gorenstein fan $\Delta$ in $\R^d$, we have
$h^*(\Delta; t) = P(\ov{\cE_\Delta(\Delta)}; t)$.

\item For a cone $\sigma$ in a Gorenstein fan $\Delta$, we have
$L(\cE_\Delta, \sigma; t)
= \ell^*(\langle\sigma\rangle; t)$.
\end{enumerate}
\end{prop}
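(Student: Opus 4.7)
The plan is to prove Statement~(1) by a direct computation of the Hilbert series of $\cE_\Delta(\Delta)$, and then use the decomposition theorem (Theorem~\ref{t:decomptheorem}) together with an induction on $\dim(\sigma)$ to deduce Statement~(2).

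For Statement~(1), I first handle the simplicial case. By Definition~\ref{defn:GradedEhrhartSheaf}, an element of $\cE_\Delta(\Delta)$ is a compatible family $(f_\tau)_{\tau \in \Delta}$ with $f_\tau = \sum_{\lambda \in \tau \cap \Z^d} c_{\tau, \lambda}\, x^\lambda$, and the description of the restriction maps shows that such a family is determined uniquely by a function $\lambda \mapsto c_\lambda$ on $|\Delta| \cap \Z^d$ (taking $c_\lambda = c_{\tau, \lambda}$ for any $\tau$ containing $\lambda$). Since $G$ is positive on nonzero lattice points, the degree-$k$ piece has basis $\{x^\lambda : G(\lambda) = k\}$, which is finite-dimensional, so
\[
\Hilb(\cE_\Delta(\Delta); t) = \sum_{\lambda \in |\Delta| \cap \Z^d} t^{G(\lambda)} = \Ehr_\Delta(t).
\]
For non-simplicial $\Delta$, Definition~\ref{defn:NonSimplicialEhrhartSheaf} gives $\cE_\Delta(\Delta) = \cE_{\tilde\Delta}(\tilde\Delta)$ for a simplicial Gorenstein refinement $\tilde\Delta$ with the same support and degree map, so the identity above persists. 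Since $\Delta$ is quasi-convex and $\cE_\Delta$ is a pure sheaf, Proposition~\ref{p:quasiconvex} yields that $\cE_\Delta(\Delta)$ is a free $A$-module, whence \eqref{eq:freeimplieshilbpoin} gives
\[
P(\ov{\cE_\Delta(\Delta)}; t) = (1-t)^d\, \Ehr_\Delta(t) = h^*(\Delta; t).
\]

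For Statement~(2), I first observe that $L(\cE_\Delta, \sigma; t)$ depends only on $\cE_\Delta|_{\langle\sigma\rangle}$. Indeed, by Theorem~\ref{t:decomptheorem} it equals $P(K_\sigma; t)$ where $K_\sigma = \ker(\ov{\res}\colon \ov{\cE_\Delta(\sigma)} \to \ov{\cE_\Delta(\partial\sigma)})$, and choosing a simplicial refinement of $\Delta$ that restricts to a simplicial refinement of $\langle\sigma\rangle$ identifies these stalks with those of $\cE_{\langle\sigma\rangle}$. Thus $L(\cE_\Delta, \sigma; t) = L(\cE_{\langle\sigma\rangle}, \sigma; t)$, and I may assume $\Delta = \langle\sigma\rangle$, which is quasi-convex in $\Span(\sigma)$. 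I then induct on $\dim(\sigma)$; the base case $\sigma = o$ is clear since $\cE_{\langle o\rangle}(o) = \R$ concentrated in degree zero gives $L(\cE_{\langle o\rangle}, o; t) = 1 = \ell^*(\langle o\rangle; t)$. For the inductive step, applying \eqref{eq:Poincar\'erecursion} to the pure sheaf $\cE_{\langle\sigma\rangle}$ and invoking Proposition~\ref{p:hpolyPoincar\'e}\eqref{item:statementonehpolypoincare} together with Statement~(1) yields
\[
h^*(\langle\sigma\rangle; t) = \sum_{\tau \leq \sigma} L(\cE_{\langle\sigma\rangle}, \tau; t) \cdot h(\lk_{\langle\sigma\rangle} \tau; t),
\]
while \eqref{eq:hstardecomp} gives the same identity with $\ell^*(\langle\tau\rangle; t)$ in place of $L(\cE_{\langle\sigma\rangle}, \tau; t)$. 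Using $h(\lk_{\langle\sigma\rangle} \sigma; t) = 1$ and applying the inductive hypothesis to the proper faces $\tau < \sigma$ (via the reduction $L(\cE_{\langle\sigma\rangle}, \tau; t) = L(\cE_{\langle\tau\rangle}, \tau; t)$) isolates the top term and gives $L(\cE_{\langle\sigma\rangle}, \sigma; t) = \ell^*(\langle\sigma\rangle; t)$.

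The main obstacle I anticipate is Statement~(1) in the non-simplicial case: one must ensure that the Hilbert series carries correctly across the pushforward construction of $\cE_\Delta$ and confirm that $\tilde\Delta$ remains quasi-convex (this follows because quasi-convexity depends only on the topology of $|\partial\Delta| = |\partial\tilde\Delta|$). Once Statement~(1) is established, Statement~(2) becomes a formal consequence of the decomposition theorem combined with the inductive, Möbius-style definition of $\ell^*$.
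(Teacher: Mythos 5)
Your proposal is correct and follows essentially the same route as the paper: for Statement~(1), compute $\Hilb(\cE_\Delta(\Delta);t)=\Ehr_\Delta(t)$ directly (reducing to the simplicial case via the pushforward definition) and then use quasi-convexity plus \eqref{eq:freeimplieshilbpoin}; for Statement~(2), reduce to $\Delta=\langle\sigma\rangle$ and induct on $\dim(\sigma)$, comparing the Poincar\'e recursion \eqref{eq:Poincar\'erecursion} against the decomposition formula \eqref{eq:hstardecomp} via Statement~(1) and Proposition~\ref{p:hpolyPoincar\'e}. The only (welcome) differences are that you spell out the WLOG reduction $L(\cE_\Delta,\sigma;t)=L(\cE_{\langle\sigma\rangle},\sigma;t)$ and the base case $\sigma=o$, which the paper leaves implicit.
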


\begin{proof}
The first statement was proved in \cite[Proposition~3.3]{Kar08}:
We first show that the underlying graded vector space of $\cE_\Delta(\Delta)$ 
has
basis elements 
$\{ x^\lambda : \lambda\in |\Delta| \cap \Z^d \}$,
where the element $x^\lambda$ has degree $G(\lambda)$.
If $\Delta$ is simplicial, then this follows from the construction of the Ehrhart sheaf.
If $\Delta$ is non-simplicial, 
then $\cE_\Delta(\Delta)$ is isomorphic to 
$\cE_{\tilde{\Delta}}(\tilde{\Delta})$
for some simplicial Gorenstein refinement $\tilde{\Delta}$ of $\Delta$ 
that has degree map equal to that of $\Delta$.
Since $|\Delta| = |\tilde{\Delta}| $,
the result follows from the simplicial case.

Now, we note that the Hilbert series of the 
graded vector space $\cE_\Delta(\Delta)$ is equal to the Ehrhart series of $\Delta$.
Since $\Delta$ is quasi-convex and $\cE_\Delta$ is a pure sheaf, by Proposition~\ref{p:quasiconvex} the $A$-module $\cE_\Delta(\Delta)$ is free.
Hence \eqref{eq:freeimplieshilbpoin} implies that the Poincar\'e polynomial of $\ov{\cE_\Delta(\Delta)}$ satisfies
\[
\frac{P(\ov{\cE_\Delta(\Delta)}; t)}{(1-t)^d} =
\Hilb(\cE_\Delta(\Delta); t) =  \Ehr_\Delta(t)
= \frac{h^*(\Delta; t)}{(1-t)^d},
\]
and the result follows.

The second statement when $\Delta$ is simplicial was observed in \cite[p.~140]{Kar08}.
For the general case, 
we assume without loss of generality that $\Delta = \langle\sigma \rangle$.
We then apply \eqref{eq:Poincar\'erecursion} to the Ehrhart sheaf and get
\begin{equation*}
P(\ov{\cE_\Delta(\Delta)}; t) = 
\sum_{\tau\in \Delta} 
L(\cE_\Delta, \tau; t) \cdot
P(\ov{\cL_\tau(\Delta)}; t).
\end{equation*}
On the other hand,
we note that the fan $\Delta = \langle \sigma \rangle$ is quasi-convex in $\Span(\sigma)$.
Thus \eqref{eq:hstardecomp} 
implies that 
\begin{align*}
h^*(\Delta; t) 
& =
\sum_{\tau \in \Delta} 
\ell^*(\langle \tau \rangle; t)
\cdot
h(\lk_\Delta \tau; t).
\end{align*}
By an induction on $\dim(\sigma)$,
we may assume 
$L(\cE_\Delta, \tau; t) = \ell^*(\langle \tau \rangle; t)$
when $\dim(\tau) < \dim(\sigma)$.
Now, 
since 
$P(\ov{\cL_\sigma(\langle \sigma \rangle)}; t) 
= 1
= 
h(\lk_{\langle \sigma \rangle} \sigma; t)$, 
we have
\begin{align*}
L(\cE_\Delta, \sigma; t)  
&=
P(\ov{\cE_\Delta(\Delta)}; t) 
-
\sum_{o \leq \tau < \sigma} 
L(\cE_\Delta, \tau; t) \cdot
P(\ov{\cL_\tau(\Delta)}; t)\\
&=
h^*(\Delta; t)
-
\sum_{o \leq \tau < \sigma} 
\ell^*(\langle \tau \rangle; t)
\cdot
h(\lk_\Delta \tau; t)\\
& = 
\ell^*(\langle \sigma \rangle; t),
\end{align*}
where the second equality uses 
Statement~\eqref{item:Ehrhartcombin1}
and
Proposition~\ref{p:hpolyPoincar\'e}.
\end{proof}

Recall from Section~\ref{sec:polysfromfiltrations} the definitions of the Hodge--Deligne polynomials.

\resmixedhstar

\begin{proof}
We pick a decomposition of $\cE_\Sigma$ into shifts of simple sheaves on $\Sigma$:
\[
\cE_\Sigma  = \bigoplus_{\alpha} \cL_{\sigma_\alpha}[-j_\alpha].
\]
By Lemma~\ref{lem:mixedpolyslimit}, the refined limit Hodge--Deligne polynomial is given by
\begin{align*}
& E(\ov{\pi_*\cE_\Sigma(\Delta)}, F_\bullet, W^\bullet, M^\bullet; u, v, w)\\
& \qquad= 
\sum_{\alpha} u^{j_\alpha} v^{\dim(\sigma_\alpha) - j_\alpha}
\sum_{\tau \in \Delta} w^{\dim(\tau)}
L(\pi_*\cL_{\sigma_\alpha}, \tau, uv)
\cdot P(\ov{\cL_\tau(\Delta)}; uvw^2)\\
&\qquad = 
\sum_{\sigma\in \Sigma}
v^{\dim(\sigma)}
\bigg(
\sum_{\substack{\alpha \\ \sigma_\alpha = \sigma}}
u^{j_\alpha} v^{- j_\alpha}
\bigg)
\sum_{\tau \in \Delta} w^{\dim(\tau)}
L(\pi_*\cL_{\sigma}, \tau, uv)
\cdot
 P(\ov{\cL_\tau(\Delta)}; uvw^2)\\
& \qquad = 
\sum_{\tau \in \Delta} w^{\dim(\tau)}
\bigg(\sum_{\sigma\in \Sigma}
v^{\dim(\sigma)}
L(\cE_\Sigma, \sigma; u v^{-1})
\cdot
L(\pi_*\cL_{\sigma}, \tau, uv) \bigg)
\cdot
P(\ov{\cL_\tau(\Delta)}; uvw^2),
\end{align*}
where the last equality uses the definition of the local Poincar\'e polynomial.
By Proposition~\ref{p:Ehrhartcombin} and  Proposition~\ref{p:hpolyPoincar\'e}, we have
\begin{align*}
L(\cE_\Sigma, \sigma; u v^{-1}) & = \ell^*(\langle\sigma \rangle); u v^{-1}), \\
L(\pi_*\cL_{\sigma}, \tau, uv )& =  \ell^h_{ \lk_{\langle\tau \rangle} \pi(\sigma)}(\lk_{\pi^{-1}(\langle \tau \rangle)} \sigma; uv),\\
P(\ov{\cL_\tau(\Delta)}; uvw^2)& =  h(\lk_\Delta \tau; uvw^2),
\end{align*}
hence the result follows.
\end{proof}

We obtain the following two corollaries by considering specializations.

\begin{cor}
Let $\Delta$ be a quasi-convex Gorenstein fan in $\R^d$.
Then the Hodge--Deligne polynomial of $(\ov{\cE_\Delta(\Delta)}, F_\bullet, W^\bullet)$ is equal to the mixed $h^*$-polynomial of $\Delta$.
\end{cor}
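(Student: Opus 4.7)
The plan is to derive this corollary directly from Theorem~\ref{t:refinedEhrhart} by specializing to the identity subdivision $\pi := \id_\Delta \colon \Delta \to \Delta$. Since $\Delta$ is quasi-convex and Gorenstein, this identity fan subdivision trivially satisfies the hypotheses of Theorem~\ref{t:refinedEhrhart} (it has equal degree map because source and target are the same fan). Moreover, $\pi_*\cE_\Delta = \cE_\Delta$ as sheaves of $\Z_{\geq 0}$-graded $\cA$-modules on $\Delta$, and the weight sheaves of $\cE_\Delta$ regarded on $\Sigma = \Delta$ coincide with those on $\Delta$. Consequently, the induced Hodge filtration and weight filtration on $\ov{\pi_*\cE_\Delta(\Delta)} = \ov{\cE_\Delta(\Delta)}$ are precisely the ones appearing in the statement of the corollary (in fact the monodromy weight filtration $M^\bullet$ coincides with $W^\bullet$ in this case, though we will not need this). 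Applying Theorem~\ref{t:refinedEhrhart} therefore yields
\[
E(\ov{\cE_\Delta(\Delta)}, F_\bullet, W^\bullet, M^\bullet; u, v, w) = h^*(\Delta, \Delta; u, v, w).
\]

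The second step is to specialize $u \mapsto uw^{-1}$ and $v \mapsto 1$ on both sides simultaneously. On the left-hand side, this substitution collapses the refined limit Hodge--Deligne polynomial to the Hodge--Deligne polynomial $E(\ov{\cE_\Delta(\Delta)}, F_\bullet, W^\bullet; u, w)$, as recorded immediately after the definition of the refined limit Hodge--Deligne polynomial in Section~\ref{sec:polysfromfiltrations}. On the right-hand side, the commutative specialization diagram of \cite[Theorem~9.2]{KS16a}, reproduced just before the subsection introducing the Ehrhart sheaf, sends $h^*(\Delta, \Delta; u, v, w)$ to $h^*(\Delta; u, w)$. Combining these two specializations and then relabeling $w$ as $v$ produces
\[
E(\ov{\cE_\Delta(\Delta)}, F_\bullet, W^\bullet; u, v) = h^*(\Delta; u, v),
\]
which is the desired identity.

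This proof is purely formal given Theorem~\ref{t:refinedEhrhart} and the two specialization statements, and presents no substantive obstacle; the main (trivial) point to check is that $\id_\Delta$ genuinely qualifies as a fan subdivision in the sense of Section~\ref{subsec:fans}, which is immediate. As an alternative, one could bypass Theorem~\ref{t:refinedEhrhart} and instead invoke Lemma~\ref{lem:mixedpolyslimit}\eqref{item:lem21} applied to $\pi = \id$, then identify the resulting sum using Proposition~\ref{p:Ehrhartcombin} together with Proposition~\ref{p:hpolyPoincar\'e}; but the specialization route is cleaner and uses the main theorem directly.
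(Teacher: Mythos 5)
Your proof is correct and takes essentially the same approach as the paper: the paper derives this corollary (together with the one following it) by "considering specializations" of Theorem~\ref{t:refinedEhrhart}, which is precisely what you do by taking $\pi = \id_\Delta$ and applying $u\mapsto uw^{-1}$, $v\mapsto 1$. The two checks you flag — that $\id_\Delta$ is a valid fan subdivision with equal degree map, and that the specializations on both sides are the ones recorded in the paper after the definition of the refined limit Hodge--Deligne polynomial and in the commutative diagram from \cite[Theorem~9.2]{KS16a} — are exactly what is needed, and you handle them correctly.
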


\begin{cor}
Let $\pi\colon \Sigma\to \Delta$ be a fan subdivision between quasi-convex Gorenstein fans in $\R^d$ with equal degree map.
The limit Hodge--Deligne polynomial of $(\ov{\pi_*\cE_\Sigma(\Delta)}, F_\bullet, M^\bullet)$ is equal to the limit mixed $h^*$-polynomial of $(\Delta, \Sigma)$.
\end{cor}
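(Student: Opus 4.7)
The plan is to derive this corollary directly from Theorem~\ref{t:refinedEhrhart} by the $w \mapsto 1$ specialization. First, I would invoke Theorem~\ref{t:refinedEhrhart} to obtain the identity
\[
E(\ov{\pi_*\cE_\Sigma(\Delta)}, F_\bullet, W^\bullet, M^\bullet; u, v, w) = h^*(\Delta, \Sigma; u, v, w).
\]
Both sides of the desired equality arise from this by setting $w = 1$, so it suffices to check that each side specializes to the claimed object.

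For the left side, this was already recorded in the discussion following the definition of the refined limit Hodge--Deligne polynomial in Section~\ref{sec:polysfromfiltrations}: setting $w = 1$ collapses the $W^\bullet$-grading, and the triple graded dimensions sum to $\dim_\R(\Gr^F_p \Gr_M^{p+q} \ov{\pi_*\cE_\Sigma(\Delta)})$, giving the limit Hodge--Deligne polynomial $E(\ov{\pi_*\cE_\Sigma(\Delta)}, F_\bullet, M^\bullet; u, v)$. This step is formal.

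For the right side, I would use the Katz--Stapledon specialization (the commutative diagram recorded after Definition~\ref{defn:refinedlimit}) stating that $h^*(\Delta, \Sigma; u, v, w)\big|_{w=1} = h^*(\Delta, \Sigma; u, v)$. Alternatively, one can verify this directly: substituting $w = 1$ in Definition~\ref{defn:refinedlimit}, expanding the local limit mixed $h^*$-polynomial $\ell^*(\langle \sigma\rangle, \pi^{-1}(\langle \sigma\rangle); u, v)$, and then applying the toric $h$-polynomial decomposition \eqref{eq:decompforh} to $h(\lk_\Sigma \tau; uv) = \sum_{\ov{\sigma} \in \lk_\Delta \pi(\tau)} \ell^h_{\lk_{\langle \sigma \rangle} \pi(\tau)}(\lk_{\pi^{-1}(\langle \sigma \rangle)} \tau; uv) \cdot h(\lk_\Delta \sigma; uv)$ regroups the double sum into the defining expression for $h^*(\Delta, \Sigma; u, v)$.

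There is essentially no obstacle here: the substantive content is already contained in Theorem~\ref{t:refinedEhrhart}, and the corollary is a purely combinatorial specialization. The only mild verification is that the $w = 1$ specialization of the refined limit mixed $h^*$-polynomial matches the limit mixed $h^*$-polynomial, which is handled either by citing \cite[Theorem~9.2]{KS16a} or by the direct rearrangement sketched above.
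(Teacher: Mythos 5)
Your proposal is correct and matches the paper's approach: the paper presents both corollaries as direct specializations of Theorem~\ref{t:refinedEhrhart}, with the $w\mapsto 1$ specialization recorded in Section~\ref{sec:polysfromfiltrations} for the Hodge--Deligne side and in the commutative diagram following Definition~\ref{defn:refinedlimit} (citing \cite[Theorem~9.2]{KS16a}) for the combinatorial side. The additional direct rearrangement you sketch for the right-hand specialization is a fine sanity check but not needed.
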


\begin{remark}
As observed in \cite[Theorem~9.4]{KS16a},
given a projective fan subdivision $\pi\colon \Sigma\to \De$,
if we write $h^*(\Delta, \Sigma; u, v, w)  = 1+ u v w^2 \sum h^*_{p, q, r} u^p v^q w^r$,
then for $0\leq k \leq r \leq \dim(\Delta) - 2$,
the sequence $\{h_{k+i, i, r} : 0\leq i \leq r-k\}$ is symmetric, nonnegative and unimodal.
Alternatively, the nonnegativity follows immediately from Theorem~\ref{t:refinedEhrhart}.
The unimodality and the symmetry follow from the relative hard Lefschetz theorem for a {projective} proper fan morphism \cite[Theorem~1.1]{Kar19}.
\end{remark}

\section{The mixed \texorpdfstring{$cd$}{cd}-index from a different minimal extension sheaf}\label{sec:cdindex}

In this section, 
we first introduce the $cd$-index 
following 
\cite[Chapter~3]{Sta12};
see also \cite{Bay21} for a survey.
We then introduce 
the mixed $cd$-index,
an enrichment of the $cd$-index,
following \cite[Section~6]{DKT20}.
After that,
we study the $(\Z_{\geq 0})^d$-graded pure sheaves with structure sheaf $\cC$ following \cite[Section~2]{Kar06},
and introduce the \textit{$t$-degree},
a specialization of the $(\Z_{\geq 0})^d$-grading,
to justify the setup of the weight sheaves in this context.
At the end, we define the weight filtration and
show that
the Hodge--Deligne polynomial is equal to the image of 
the mixed $cd$-index under the injective linear map $\eta'$.

Before we start, we note that in this section 
the sheaf $\cC$ which is defined below plays the role of the structure sheaf.

\subsection{The \texorpdfstring{$cd$}{cd}-index}\label{subsec:cdindex}

Let $\Delta$ be a purely $n$-dimensional fan in $\R^d$.
For a subset $S\subseteq \{1, \dots, n\}$,
the \textit{flag $f$-number} $f_S$ of $\Delta$ is defined to be the number of flags
$\{o < \sigma_1 < \dots < \sigma_n\}$ of $\Delta$
satisfying $\{\dim (\sigma_1), \dots, \dim (\sigma_n)\} = S$.

Let $\R\langle a, b \rangle$ be the graded polynomial ring in non-commuting variables $a$ and $b$,
where $\deg(a) = \deg(b) = 1$.
The \emph{$ab$-index} $\Psi_\Delta(a, b)$
of $\Delta$ is defined to be 
$\Psi_\Delta(a, b) := \sum_{S\subseteq \{1, \dots, n\}} f_S u_S \in \R\langle a, b \rangle$,
where $u_S = u_1 \cdots u_n$ is defined by 
\[
u_i = \begin{cases}
a-b & \text{if } i\notin S, \\
b & \text{if } i\in S.
\end{cases}
\]
In particular,
the $ab$-index is a homogeneous polynomial of degree $\dim(\Delta)$.

Let $\R \langle c, d\rangle$ be the graded polynomial ring in non-commuting variables $c$ and $d$,
where $\deg(c) = 1$ and $\deg(d) = 2$.
Fine conjectured and Bayer and Klapper \cite[Theorem~4]{BK91} proved that as a consequence of the generalized Dehn--Sommerville equations for the flag $f$-numbers \cite[Theorem~4.1]{BB85},
the $ab$-index of a complete fan $\Delta$
(or more generally, the boundary of an \textit{Eulerian poset})
satisfies 
\begin{equation*}\label{eq:cdIndexIscdExpressible}
\Psi_\Delta(a, b) = \Phi_\Delta(a+b, ab+ba)
\end{equation*}
for a unique polynomial $\Phi_\Delta(c, d)\in \R \langle c, d\rangle$ called the \emph{$cd$-index}.
In particular,
the $cd$-index is a homogeneous polynomial of degree $\dim(\Delta)$.

\begin{remark}
Our definitions of the $ab$-index and 
the $cd$-index do not require the fan to have a unique maximal element, unlike the one in the literature. 
\end{remark}

\begin{defn}[{\cite[p.~231]{EK07}}]
\label{defn:LocalcdIndex}
The \emph{local $cd$-index} $\ell^\Phi_\Delta(c, d)$ of a non-complete quasi-convex fan $\Delta$ is defined to be the unique polynomial in $\R\langle c, d\rangle$ satisfying 
\[
\Psi_\Delta(a, b)
=
\ell^\Phi_\Delta(a+b, ab+ba)
 +
\Psi_{\partial \Delta}(a, b) \cdot a.
\]
\end{defn}


We then define 
the \emph{$cd$-index} $\Phi_\Delta(c, d) \in \R\langle c, d\rangle$ of a non-complete quasi-convex fan $\Delta$ to be {\cite[p. 231]{EK07}} 
\[
\Phi_\Delta(c, d) =  \ell^\Phi_\Delta(c, d)  + \Phi_{\partial \Delta}(c, d).
\]
Here, 
the $cd$-index $\Phi_{\partial \Delta}$ is well-defined,
since the underlying poset of $\partial \Delta$ is the boundary of an Eulerian poset \cite[Section~3.1]{Kar06}. 

There is a decomposition theorem for the $cd$-index \cite[Theorem~2.7]{EK07}:
for a fan subdivision $\pi\colon \Sigma\to \Delta$ between quasi-convex fans, we have
\begin{equation}\label{eq:cddecomp}
\Phi_\Sigma(c, d) = \sum_{\sigma\in \Delta} \ell^\Phi_{\pi^{-1}(\langle \sigma \rangle)}(c, d)
\cdot
 \Phi_{\link_\Delta \sigma}(c, d).
\end{equation}
Here, 
the preimage fan $\pi^{-1}(\langle \sigma \rangle)$ is supported on $\sigma$, hence it is a non-complete quasi-convex fan in $\Span(\sigma)$ and its local $cd$-index is well-defined. 
This theorem was proved using the theory of pure sheaves in \cite[Theorem~2.7]{EK07}.

We consider the $\Z_{\geq 0}$-graded vector space $\R\langle c', d'\rangle \otimes_\R \R \langle c, d\rangle$, where $c'$ and $d'$ are non-commuting variables of degree $1$ and $2$, respectively.

\begin{defn}[{\cite[Definition~6.1]{DKT20}}]\label{defn:mixedcd}
For a fan subdivision $\pi\colon \Sigma\to \Delta$ between quasi-convex fans, the \emph{mixed $cd$-index} $\Omega_\pi(c', d', c, d) \in \R\langle c', d'\rangle \otimes_\R \R \langle c, d\rangle$ is defined to be
\[
\Omega_\pi(c', d', c, d) = \sum_{\sigma\in \Delta} \ell^\Phi_{\pi^{-1}(\langle \sigma \rangle)}(c', d')  \otimes
\Phi_{\link_\Delta \sigma}(c, d).
\]
\end{defn}

\begin{remark}
Bayer and Ehrenborg used the fact that the $cd$-index is a \textit{coalgebra homomorphism} \cite[Proposition~3.1]{ER98} to show that the $cd$-index determines the toric $h$-polynomial \cite[Proposition~7.1]{BE00}.
Later, Dornian, Katz and the author used the fact that the mixed $cd$-index is a \textit{comodule homomorphism} \cite[Remark~6.9]{DKT20} to show that the mixed $cd$-index determines the mixed $h$-polynomial \cite[Theorem~7.13]{DKT20}.
\end{remark}

\begin{remark}
The original definition \cite[Definition~6.1]{DKT20} uses a variable $e$ of degree $-1$ to homogenize the polynomial.
Our definition is obtained by setting $e\mapsto 1$.
\end{remark}

\subsection{Pure sheaves for the \texorpdfstring{$cd$}{cd}-index}

The results in this subsection can be found in \cite{Kar06}.
We also refer the reader to \cite[Chapter~8]{MS05} for background in multigraded modules.

Let $C_d := \R[x_1, \dots, x_d]$ be the $(\Z_{\geq 0})^d$-graded polynomial ring, 
where $\deg(x_i)  = e_i \in (\Z_{\geq 0})^d$.
Let $\mmax$ be the homogeneous maximal ideal generated by $x_1, \dots, x_d$.
For a finitely generated $(\Z_{\geq 0})^d$-graded $C_d$-module $M$, we write $\ov{M}$ for the $(\Z_{\geq 0})^d$-graded vector space $M\otimes_{C_d} C_d/\mmax$, 
and let $\rho\colon M\to \ov{M}$ be the corresponding quotient map.

The \emph{structure sheaf} $\cC$ on a fan $\Delta$ in $\R^d$ is a sheaf of $(\Z_{\geq 0})^d$-graded rings defined as follows.
For a cone $\sigma$, we set $\cC(\sigma) :=  \R[x_1, \dots, x_{\dim(\sigma)}]$, where $\deg(x_i) = e_i \in (\Z_{\geq 0})^d$.
For cones $\sigma \geq \tau$, the restriction map $\res_{\sigma, \tau}\colon \cC(\sigma) \to \cC(\tau)$ is
the graded ring map defined by
\[
\res_{\sigma, \tau}(x_i) = \begin{cases}
x_i & \text{if } i\leq \dim(\tau),\\
0 & \text{otherwise.}
\end{cases}
\]
This defines a sheaf of $(\Z_{\geq 0})^d$-graded rings.

A sheaf $\cF$ is a \textit{sheaf of $(\Z_{\geq 0})^d$-graded $\cC$-modules} if $\cF(U)$ is a $(\Z_{\geq 0})^d$-graded $\cC(U)$-module
whenever $U$ is an open set and 
the restriction maps of $\cF$ are graded module maps compatible with that of $\cC$.
In particular,
each $\cF(U)$ is a graded $C_d$-module and 
each restriction map of $\cF$ is a graded $C_d$-module map,
since there is a canonical ring map $C_d\to \cC(\sigma)$ whenever $\sigma$ is a cone.

For notational convenience, 
we write 
$\cF(\Delta, \partial \Delta)$
for 
$\ker(\res \colon \cF(\Delta) \to  \cF(\partial \Delta))$.
For a cone $\sigma\in \Delta$,
we write 
$\cF(\sigma, \partial \sigma)$
for 
$\ker(\res \colon \cF(\sigma) \to  \cF(\partial \sigma))$.

\begin{defn}
A sheaf $\cF$ of $(\Z_{\geq 0})^d$-graded $\cC$-modules on a fan $\Delta$ is called a \emph{pure sheaf} if $\cF$ is flabby and 
$\cF(\sigma)$ is a finitely generated free $\cC(\sigma)$-module whenever $\sigma$ is a cone.
\end{defn}

If $\cF$ is a pure sheaf on $\Delta$,
then both $\cF(\Delta)$ and $\cF(\Delta, \partial \Delta)$
are finitely generated as $C_d$-modules \cite[p.~8]{BBFK02}.

\begin{defn}\label{defn:MultiGradedSimpleSheaf}
For a cone $\sigma$ in a fan $\Delta$, a \emph{simple sheaf} $\cL_\sigma$ based at $\sigma$ is a sheaf of $(\Z_{\geq 0})^d$-graded $\cC$-modules satisfying the following conditions:
\begin{itemize}
\item \textit{Normalization:} For each $\tau\in \Delta$ of dimension at most $\dim(\sigma)$,
\[
\cL_\sigma(\tau) = \begin{cases}
\cC(\tau) & \text{if } \tau  = \sigma,\\
0 & \text{otherwise.}
\end{cases}
\]

\item \textit{Local freeness:} For each cone $\tau \in \Delta$, the $\cC(\tau)$-module $\cL_\sigma(\tau)$ is free.

\item \textit{Local minimal extension:} For each cone $\tau \in \Delta$ of dimension strictly greater than $\dim(\sigma)$, the restriction map $\res\colon \cL_\sigma(\tau) \to\cL_\sigma(\partial \tau)$ induces an isomorphism of graded vector spaces:
\[
\begin{tikzcd}
\ov{\res}\colon \ov{\cL_\sigma(\tau)}
\arrow{r}{\sim}
& \ov{\cL_\sigma(\partial \tau)}.
\end{tikzcd}
\]
\end{itemize}
\end{defn}

As noted in \cite[Section~2.1]{Kar06}, 
for each cone $\sigma$ the simple sheaf $\cL_\sigma$ exists 
and is unique up to an isomorphism.
We also call the simple sheaf $\cL_o$ the \emph{minimal extension sheaf} $\cL_\Delta$ of $\Delta$.

\begin{remark}
\label{remark:cdPureSheavesDependPosetOnly}
As noted in \cite[p.~704]{Kar06}, 
the minimal extension sheaf $\cL_\Delta$ 
with structure sheaf $\cC$
depends on the underlying poset of the fan instead of its realization:
Suppose $\Delta$ in $\R^{d}$ and $\Sigma$ in $\R^{d'}$ are \textit{combinatorially equivalent} fans, i.e., fans with isomorphic underlying posets.
Without loss of generality, we assume that $d \leq d'$.
Then for each $(j_1, \dots, j_{d'}) \in (\Z_{\geq 0})^{d'}$,
we have
\[
\ov{\cL_{\Sigma}(\Sigma) }
^{(j_1, \dots, j_{d'})}
=
\begin{cases}
\ov{\cL_{\Delta}(\Delta) }
^{(j_1, \dots, j_{d})}
& \text{if } j_{d + 1} = \dots = j_{d'} = 0,\\
0 & \text{otherwise.}
\end{cases}
\]
\end{remark}

There is a decomposition theorem for the pure sheaves with structure sheaf $\cC$ \cite[p.~227]{EK07}.
It states that 
a pure sheaf $\cF$ on a fan $\Delta$ in $\R^d$
admits a direct sum decomposition into shifts of simple sheaves:
\[
\cF \simeq \bigoplus_{\sigma\in \Delta}
K_\sigma \otimes_\R \cL_\sigma,
\]
where $K_\sigma$ is the $(\Z_{\geq 0})^{d}$-graded vector space 
$\ker(\ov{\res}\colon \ov{\cF(\sigma)} \to \ov{\cF(\partial \sigma)})$.
Alternatively we can write $\cF \simeq \bigoplus_\alpha \cL_{\sigma_\alpha}[-{j_\alpha}]$,
where each $\sigma_\alpha$ is a possibly repeated cone in $\Delta$ and 
each ${j_\alpha} \in (\Z_{\geq 0})^d$ is a possibly repeated lattice point.
Here, given $j\in (\Z_{\geq 0})^d$, the \textit{shifted sheaf} $\cF[-j]$ is defined by 
\[
\cF[-j](U)^\lambda = \cF(U)^{\lambda  - j}
\]	 
whenever $U$ is an open set 
and $\lambda \in (\Z_{\geq 0})^d$.

Karu \cite[Section~2.2]{Kar06} verified that 
the duality results \cite[Section~4 and 6]{BBFK02} are flexible enough to hold for the pure sheaves with structure sheaf $\cC$:
For a quasi-convex fan $\Delta$ in $\R^d$,
both $\cL_\Delta(\Delta)$ and $\cL_\Delta(\Delta, \partial \Delta)$
are 
$(\Z_{\geq 0})^d$-graded free $C_d$-modules
dual to each other in the following sense:
\begin{equation}\label{eq:cdduality}
\cL_\Delta(\Delta, \partial \Delta)
 = 
 \Hom_{C_d} \big(
 \cL_\Delta(\Delta), C_d[-e_1 - \dots - e_d]
\big).
\end{equation}

\subsection{The \texorpdfstring{$t$}{t}-degree}

We introduce a $\Z_{\geq 0}$-grading that is a specialization of the $(\Z_{\geq 0})^d$-grading.
Let $T\colon (\Z_{\geq 0})^d \to \Z_{\geq 0}$ be an additive map that sends each $e_i \in (\Z_{\geq 0})^d$ to $2^{i-1} \in \Z_{\geq 0}$.
For a homogeneous element $f$ of degree $\deg(f) \in (\Z_{\geq 0})^d$, its \emph{$t$-degree} $\tdeg(f)$ is defined to be $T(\deg(f))$.
For a $(\Z_{\geq 0})^d$-graded vector space $V$, we define its \emph{$t$-Poincar\'e polynomial} to be
\[P^t(V; t) = \sum_{\lambda\in (\Z_{\geq 0})^d} \dim_\R (V^{\lambda}) t^{T(\lambda)}.\]

Given an $n$-dimensional cone $\sigma$, we define its \emph{$t$-dimension} to be $\tdim(\sigma) =  2^{n}-1$.
Then the $t$-dimension $\tdim(\Delta)$ of a fan $\Delta$ is defined to be the maximum of the $t$-dimensions of its cones.

The following lemma relates the simple sheaf $\cL_\sigma$ on $\Delta$ and the minimal extension sheaf $\cL_{\lk_\Delta\sigma}$ of $\lk_\Delta\sigma$.

\begin{lem}\label{lem:SimpleSheafOfLinkIsShiftInDegree}
Let $\Delta$ be a fan in $\R^d$ and $\sigma \in \Delta$ be a cone of dimension $k$.
Then 
\begin{align*}
P^t(\ov{\cL_\sigma(\Delta)}; t) 
& =
P^t(\ov{\cL_{\lk_\Delta \sigma}(\lk_\Delta \sigma)}; t^{2^{k}}), \\
P^t(\ov{\cL_\sigma(\Delta, \partial \Delta)}; t) 
& =
P^t(\ov{\cL_{\lk_\Delta \sigma}(\lk_\Delta \sigma, \partial \lk_\Delta \sigma)}; t^{2^k}).
\end{align*}
\end{lem}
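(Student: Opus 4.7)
The plan is to mimic the proof of Lemma~\ref{lem:SimpleSheafAndMinimalExtensionSheaf}, carefully tracking the $(\Z_{\geq 0})^d$-multigrading. Let $i\colon \lk_\Delta\sigma \to \Delta$ be the continuous injection $\ov{\tau} \mapsto \tau$, and let $s\colon (\Z_{\geq 0})^{d-k} \to (\Z_{\geq 0})^d$ be the additive map sending $e_j$ to $e_{k+j}$. I would regard $i_*\cL_{\lk_\Delta\sigma}$ as a $(\Z_{\geq 0})^d$-graded sheaf by composing its native grading with $s$, and define a sheaf $\cF$ on $\Delta$ whose stalks are
\[
\cF(\tau) \;=\; \begin{cases}\cC(\sigma) \otimes_\R \cL_{\lk_\Delta\sigma}(\ov{\tau}) & \text{if } \tau \geq \sigma,\\ 0 & \text{otherwise,}\end{cases}
\]
with restriction maps $\id \otimes \res$ between $\sigma$-containing stalks and zero otherwise. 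Since $\cC(\tau) = \R[x_1, \ldots, x_{\dim(\tau)}]$ factors as $\cC(\sigma) \otimes_\R \cC_{\lk_\Delta\sigma}(\ov{\tau})$ under the identification $x_{k+j} \leftrightarrow y_j$ (matching $e_{k+j}$ with $s(e_j)$), each $\cF(\tau)$ is naturally a $(\Z_{\geq 0})^d$-graded $\cC(\tau)$-module.

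Next, I would verify that $\cF$ satisfies the three conditions of Definition~\ref{defn:MultiGradedSimpleSheaf}, concluding by uniqueness that $\cF \simeq \cL_\sigma$. Normalization holds since $\cF(\sigma) = \cC(\sigma) \otimes_\R \R = \cC(\sigma)$, while any other cone $\tau$ with $\dim(\tau) \leq k$ fails to contain $\sigma$ and thus has $\cF(\tau) = 0$. Local freeness follows because tensor products of finitely generated free modules are free. For local minimal extension at $\tau > \sigma$, proper faces $\gamma < \tau$ with $\cF(\gamma) \neq 0$ are exactly those with $\sigma \leq \gamma < \tau$, corresponding bijectively via $\gamma \mapsto \ov{\gamma}$ to cones of $\partial \ov{\tau}$ in $\lk_\Delta\sigma$; this yields $\cF(\partial \tau) \simeq \cC(\sigma) \otimes_\R \cL_{\lk_\Delta\sigma}(\partial \ov{\tau})$, and $\ov{\res}$ for $\cF$ decomposes as $\ov{\id} \otimes \ov{\res}$, which is an isomorphism because $\dim(\ov{\tau}) > 0$.

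For the global computation, the same considerations give $\cF(\Delta) \simeq \cC(\sigma) \otimes_\R \cL_{\lk_\Delta\sigma}(\lk_\Delta\sigma)$; and because cones of $\partial \Delta$ containing $\sigma$ correspond bijectively via $\gamma \mapsto \ov{\gamma}$ to cones of $\partial \lk_\Delta\sigma$ (the containment-in-exactly-one-maximal-cone property passes to the link), one also gets $\cF(\Delta, \partial \Delta) \simeq \cC(\sigma) \otimes_\R \cL_{\lk_\Delta\sigma}(\lk_\Delta\sigma, \partial \lk_\Delta\sigma)$. Modding out by $\mmax = (x_1, \ldots, x_d)$ collapses the $\cC(\sigma)$ factor to $\R$ via its first $k$ variables and reduces the second factor to its own $\ov{\cdot}$ via the remaining $d-k$ variables, which act as the link's maximal ideal. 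Hence $\ov{\cL_\sigma(\Delta)} \simeq \ov{\cL_{\lk_\Delta\sigma}(\lk_\Delta\sigma)}$ with grading shifted along $s$, and similarly for the boundary version. Writing $T'$ for the $t$-degree map on $(\Z_{\geq 0})^{d-k}$, one has $T(s(\lambda')) = \sum_j \lambda'_j \, 2^{k+j-1} = 2^k \cdot T'(\lambda')$, so this grading shift translates precisely to the substitution $t \mapsto t^{2^k}$ at the level of $t$-Poincar\'e polynomials. The main obstacle is bookkeeping the multigrading under $s$ and cleanly identifying the sections along $\partial \Delta$ with those of $\partial \lk_\Delta\sigma$.
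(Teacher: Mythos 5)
Your proposal is correct and follows essentially the same route as the paper: both construct the sheaf $C_k \otimes_\R i_*\cL_{\lk_\Delta\sigma}$ with the stacked multigrading, verify it satisfies the defining conditions of the simple sheaf $\cL_\sigma$, pass to global and relative sections, reduce modulo $\mmax$, and translate the $(\Z_{\geq 0})^{d-k}\hookrightarrow(\Z_{\geq 0})^d$ degree shift into the substitution $t\mapsto t^{2^k}$ via $T(s(\lambda'))=2^kT'(\lambda')$. The only cosmetic difference is that for $\cF(\Delta,\partial\Delta)$ you identify $i^{-1}(\partial\Delta)$ with $\partial\lk_\Delta\sigma$ explicitly at the level of cones, whereas the paper delegates the kernel computation to a tensor-product algebra lemma; these are interchangeable and your version arguably makes the needed observation more explicit.
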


\begin{proof}
Let $i\colon \lk_\Delta\sigma\to \Delta$ be the injective map sending $\ov{\tau}$ to $\tau$.
This is a continuous map.
Thus the direct image sheaf $i_* \cL_{\lk_\Delta\sigma}$ is a sheaf of $(\Z_{\geq 0})^{d-k}$-graded $i_*\cC_{\lk_\Delta \sigma}$-modules on $\Delta$.

Recall that $C_k$ is the $(\Z_{\geq 0})^k$-graded ring $\R[x_1, \dots, x_k]$.
Let 
$C_k \otimes_\R i_* \cL_{\lk_\Delta\sigma}$
be 
the $(\Z_{\geq 0})^{d}$-graded sheaf,
where for each open set $U$
and each $(j_1, \dots, j_d) \in (\Z_{\geq 0})^d$, 
\begin{equation*}\label{eq:StackingGrading}
(
C_k \otimes_\R i_* \cL_{\lk_\Delta\sigma}
)(U) 
^{(j_1, \dots, j_d)}
: =
C_k^{(j_1, \dots, j_k)}
\otimes_\R
i_* \cL_{\lk_\Delta\sigma}(U) 
^{(j_{k+1}, \dots, j_d)}.
\end{equation*}
While this is a sheaf of 
$C_k \otimes_\R i_* \cC_{\lk_\Delta\sigma}$-modules,
we can consider it as
a sheaf of 
$\cC$-modules:
For a cone $\tau$, 
the stalk
$(C_k \otimes_\R i_* \cC_{\lk_\Delta\sigma})(\tau)$
is nonzero
precisely when $\tau \geq \sigma$.
Now, 
for such a cone $\tau$ 
we have
\[
(C_k \otimes_\R i_* \cC_{\lk_\Delta\sigma})(\tau)
=
C_k \otimes_\R \cC_{\lk_\Delta\sigma}(\ov{\tau})
\simeq 
\cC_{\Delta}(\tau).
\]

We prove that $C_k \otimes_\R i_* \cL_{\lk_\Delta\sigma}$
is the simple sheaf based at $\sigma$
by 
checking the conditions in Definition~\ref{defn:MultiGradedSimpleSheaf}:
\begin{itemize}
\item At $\sigma$, 
we have 
$(C_k\otimes_\R i_* \cL_{\lk_\Delta\sigma})(\sigma) = C_{k}\otimes_\R \R \simeq \cC_{\Delta}(\sigma)$.

\item For each $\tau \geq \sigma$, 
since $\cL_{\lk_\Delta\sigma}(\ov{\tau})$ is a 
free 
$\cC_{\lk_\Delta\sigma}(\ov{\tau})$-module,
the stalk
$(C_{k}\otimes_\R i_* \cL_{\lk_\Delta\sigma})(\tau)$ 
is a free $C_k \otimes_\R \cC_{\lk_\Delta \sigma}(\ov{\tau}) \simeq \cC_\Delta(\tau)$-module.

\item 
The restriction map of $C_k\otimes_\R i_* \cL_{\lk_\Delta\sigma}$ 
is the tensor product of the identity map on $C_k$ and the restriction map 
of $i_*\cL_{\lk_\Delta\sigma}$.
Given a cone $\tau > \sigma$,
since 
$C_k \otimes_\R \cC_{\lk_\Delta \sigma}(\ov{\tau}) \simeq \cC_\Delta(\tau)$,
the map induced by the restriction map of 
$C_k\otimes_\R i_* \cL_{\lk_\Delta\sigma}$
is given by 
\[
\Big(
\ov{\id}\colon \ov{C_k} \to \ov{C_k} 
\Big)
\otimes_\R 
\Big(
\ov{\res}\colon 
\ov{\cL_{\lk_\Delta \sigma}(\ov{\tau})}
\to 
\ov{\cL_{\lk_\Delta \sigma}(\partial \ov{\tau})}
\Big).
\]
Since both $\ov{\id}$ and $\ov{\res}$ are isomorphisms of graded vector spaces,
their tensor product is also an isomorphism of graded vector spaces. 
\end{itemize}

As a result, we have
$\cL_\sigma(\Delta)
\simeq 
(C_k \otimes_\R i_* \cL_{\lk_\Delta\sigma})(\Delta)
$
and 
\begin{align*}
\cL_\sigma(\Delta, \partial \Delta)
& \simeq 
\ker\big( \res\colon (C_k \otimes_\R i_* \cL_{\lk_\Delta\sigma})(\Delta) \to (C_k \otimes_\R i_* \cL_{\lk_\Delta\sigma})(\partial \Delta)\big)
\\
& \simeq 
C_k
\otimes_\R
\cL_{\lk_\Delta\sigma}(\lk_\Delta\sigma, \partial \lk_\Delta\sigma).
\end{align*}
Here, 
we use 
the algebra fact that given two surjective module maps $f\colon V\to W$ and $g\colon V'\to W'$, we have 
$\ker(f\otimes g) = \ker(f)\otimes V + V' \otimes \ker(g)$; see for example \cite[Theorem~18]{Gri17}.
Now, 
since $C_k \otimes_\R C_{d-k} \simeq C_d$,
we have
\begin{align*}
\ov{\cL_\sigma(\Delta)}
& \simeq 
\R \otimes_\R \ov{\cL_{\lk_\Delta\sigma}(\lk_\Delta\sigma)}, \\
\ov{\cL_\sigma(\Delta, \partial \Delta)}
& \simeq 
\R \otimes_\R \ov{\cL_{\lk_\Delta\sigma}(\lk_\Delta\sigma, \partial \lk_\Delta\sigma)}.
\end{align*}
Since 
the $\R$'s in the first components are $(\Z_{\geq 0})^k$-graded,
the results follow.
\end{proof}

Given a pure sheaf $\cF$ on $\Delta$, the \emph{local $t$-Poincar\'e polynomial} $L^t(\cF, \sigma; t)$ is defined to be $P^t(K_\sigma; t)$, 
where $K_\sigma$
is the $(\Z_{\geq 0})^d$-graded vector space defined above
satisfying $\cF \simeq \oplus_{\sigma\in \Delta} K_\sigma\otimes_\R \cL_\sigma$.
Thus the decomposition theorem for pure sheaves implies
\begin{equation*}\label{eq:cdPoincar\'edecomp}
P^t(\ov{\cF(\Delta)}; t) =
\sum_{\sigma\in \Delta} L^t(\cF, \sigma; t)
\cdot
 P^t(\ov{\cL_\sigma(\Delta)}; t).
\end{equation*}

We proceed to relate the $cd$-index and the $t$-Poincar\'e polynomial.
We first define a linear map $\eta \colon \R\langle c, d\rangle \to \R[t]$ recursively:
Let $\eta(1; t) = 1$.
Given a homogeneous polynomial $f(c, d)$ of degree $k$,
we set
\begin{align*}\label{eq:etaChangeOfVariable}
\begin{split}
\eta \big(f(c, d) \cdot c; t \big) &= \eta \big(f(c, d); t\big) \cdot  (1+t^{2^k}),\\
\eta \big(f(c, d) \cdot d; t \big) &= \eta\big(f(c, d); t\big) \cdot (t^{2^k}+t^{2^{k+1}}).
\end{split}
\end{align*}
For example, we have  $\eta(ccd; t) = (1+t)(1+t^2)(t^4 + t^8)$.

\begin{prop}\label{p:etaIsInjective}
The linear map $\eta$ is injective.
\end{prop}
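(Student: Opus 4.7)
The plan is to prove injectivity by induction on the $cd$-degree, using an ``upper triangularity'' of $\eta$ in the $t$-variable that is built into the recursion. Write $V_n \subseteq \R\langle c, d\rangle$ for the homogeneous subspace of degree $n$, and aim to show that $\eta$ restricted to $\bigoplus_{i \leq n} V_i$ is injective for every $n$. The base case $n = 0$ is immediate since $\eta(1; t) = 1$. Two auxiliary facts drive the induction. First, a support bound: for $f \in V_n$, the polynomial $\eta(f; t)$ has $t$-degree at most $2^n - 1$, which follows easily by induction from the recursion since each multiplication by $(1 + t^{2^k})$ or $(t^{2^k} + t^{2^{k+1}})$ at step $k$ doubles the existing degree bound in the expected way. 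Second, there is a direct sum decomposition $V_n = V_{n-1} c \oplus V_{n-2} d$ (with the convention $V_{-1} = 0$), obtained by sorting $cd$-words by their last letter, so each $f \in V_n$ with $n \geq 1$ is uniquely written $f = gc + hd$.

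For such $f$, applying the recursion directly yields
\begin{equation*}
\eta(f; t) \;=\; \underbrace{\eta(g; t) + \eta(h; t) \cdot t^{2^{n-2}}}_{\text{support in } [0,\, 2^{n-1}-1]} \;+\; \underbrace{\bigl(\eta(g; t) + \eta(h; t)\bigr) \cdot t^{2^{n-1}}}_{\text{support in } [2^{n-1},\, 2^{n}-1]},
\end{equation*}
where the support intervals follow from the support bound applied to $g \in V_{n-1}$ and $h \in V_{n-2}$ (and the $h$-term simply vanishes when $n = 1$). The key observation is the disjointness of these two intervals: all coefficients of $\eta(f; t)$ in $t$-degrees at least $2^{n-1}$ come exclusively from the second summand.

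The inductive step follows: suppose $\sum_{i \leq n} f_i$ lies in the kernel of $\eta$ with $f_i \in V_i$. By the support bound, each $\eta(f_i; t)$ with $i \leq n-1$ contributes nothing in $t$-degrees at least $2^{n-1}$, so the part of $\eta(f_n; t)$ in those degrees already vanishes. The displayed formula then forces $\eta(g + h; t) = 0$ with $g + h \in V_{n-1} \oplus V_{n-2}$; the inductive hypothesis gives $g + h = 0$, and since $g$ and $h$ lie in distinct homogeneous components, $g = 0$ and $h = 0$ separately, whence $f_n = 0$. One further application of the inductive hypothesis to $\sum_{i \leq n-1} f_i = 0$ closes the step. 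I do not anticipate a serious obstacle here: the argument is essentially bookkeeping on the supports produced by the two-factor recursion, with the only care needed being to verify the support bound uniformly in $n$ and to handle the edge cases $n = 0, 1$ where one of $g, h$ is trivial.
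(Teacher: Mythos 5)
Your proof is correct, but it takes a genuinely different route from the paper's. The paper simply observes that $\eta$ is the composition of Karu's injective map $\phi\colon \R\langle c,d\rangle \to \R[t_1,\dots,t_d]$ with the substitution $t_i\mapsto t^{2^{i-1}}$; since the image of $\phi$ consists of multilinear polynomials (each $t_i$ appears to power at most one), the exponents of $t$ that arise are sums of distinct powers of $2$, so the substitution is reversible by reading off binary digits, and injectivity of $\eta$ follows from injectivity of $\phi$. Your argument, by contrast, is a self-contained induction on $cd$-degree: you establish the support bound $\deg_t \eta(f;t)\leq 2^n-1$ for $f\in V_n$, split $f=gc+hd$, and use the identity
\[
\eta(f;t)=\bigl(\eta(g;t)+\eta(h;t)\,t^{2^{n-2}}\bigr)+\bigl(\eta(g;t)+\eta(h;t)\bigr)t^{2^{n-1}}
\]
together with the disjointness of the two support blocks to peel off $f_n$ and descend. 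This is essentially the ``decode the top binary digit, then recurse'' phenomenon made explicit, so it is morally the same mechanism the paper invokes, but you re-derive it from scratch rather than quoting Karu's injectivity of $\phi$. The trade-off is the usual one: the paper's version is a two-sentence reduction to prior work, while yours is longer but elementary, transparent, and independent of the external reference. One small point worth tightening in a write-up: your phrase that each factor ``doubles the existing degree bound in the expected way'' should be replaced by the actual computation ($(2^k-1)+2^k=2^{k+1}-1$ for $c$, and $(2^k-1)+2^{k+1}<2^{k+2}-1$ for $d$), since the bound is tight for $c$-steps but strict for $d$-steps and that distinction is what makes the inequality hold uniformly.
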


\begin{proof}
This linear map $\eta$ is just the injective linear map $\phi$ in \cite[Section~2.3]{Kar04} with each $t_i$ replaced by $t^{T(e_i)} = t^{2^{i-1}}$.
Now, 
by the definition of $\eta$, 
each 
$t^{2^{i-1}}$ appears at most once in each monomial term in $\eta(f(c, d); t)$.
Hence 
the operation of replacing each $t_i$ by $t^{2^{i-1}}$ is reversible, and $\eta$ is injective.
\end{proof}

Since $\eta$ is injective, 
the following proposition implies that 
the $cd$-index can be uniquely recovered from the 
$t$-Poincar\'e polynomial.
We believe the proposition is known to the community,
since its content is used in the proof of \cite[Theorem~2.7]{EK07}.
For the purpose of completeness, we provide a proof here.

\begin{prop}\label{p:FactsAbouttPoin}
Let $\Delta$ be a quasi-convex fan in $\R^d$.
Then the following are true:
\begin{enumerate}
\item \label{item:tP1link}
For a cone $\sigma \in \Delta$, we have 
${ \eta}(\Phi_{\lk_\Delta \sigma}(c, d); t^{2^{\dim(\sigma)}})
 = P^t(\ov{\cL_\sigma(\Delta)};t)$. 
 In particular, 
we have
${ \eta}(\Phi_{\Delta }(c, d); t)
 = P^t(\ov{\cL_\Delta(\Delta)};t)$. 

\item \label{item:tPlocal}
If $\Delta$ is supported on a cone $\sigma$,
then we have 
$L^t(\pi_*\cL_\Delta, \sigma; t) = { \eta}(\ell_\Delta^\Phi(c, d); t)$,
where
$\pi\colon \Delta \to \langle \sigma\rangle$ is the 
corresponding fan subdivision.
\end{enumerate}
\end{prop}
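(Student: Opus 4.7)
The plan is to prove Statement (1) first via reduction to the special case $\sigma = o$ combined with induction on $\dim \Delta$, and then derive Statement (2) from Statement (1) by matching two decomposition recursions through the injective map $\eta$.

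For the ``in particular'' case of (1), I would prove $\eta(\Phi_\Delta(c,d); t) = P^t(\ov{\cL_\Delta(\Delta)}; t)$ by induction on $\dim \Delta$, with the trivial base case $\Delta = \{o\}$. For the inductive step, the two sides each admit a splitting along the boundary. On the combinatorial side, one has $\Phi_\Delta = \ell^\Phi_\Delta + \Phi_{\partial \Delta}$, together with the Bayer--Klapper relation $\Psi_\Delta(a,b) = \ell^\Phi_\Delta(a+b,ab+ba) + \Psi_{\partial \Delta}(a,b)\cdot a$. On the sheaf side, the short exact sequence
\[
0 \to \cL_\Delta(\Delta, \partial \Delta) \to \cL_\Delta(\Delta) \to \cL_\Delta(\partial \Delta) \to 0
\]
plus the Poincar\'e duality coming from \eqref{eq:cdduality},
\[
P^t(\ov{\cL_\Delta(\Delta, \partial \Delta)}; t) = t^{\tdim \Delta} P^t(\ov{\cL_\Delta(\Delta)}; t^{-1}),
\]
gives an analogous splitting. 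I would check that $\cL_\Delta|_{\partial \Delta}$ coincides with $\cL_{\partial \Delta}$ (using the local minimal extension property), so that the induction hypothesis applied to $\partial \Delta$ identifies $P^t(\ov{\cL_\Delta(\partial \Delta)}; t)$ with $\eta(\Phi_{\partial \Delta}; t)$. The degree-vanishing bound for simple sheaves (analogous to Corollary~\ref{c:bradenstalk}, adapted to the $(\Z_{\geq 0})^d$-grading and $t$-degree) then forces the complementary piece $P^t(\ov{\cL_\Delta(\Delta, \partial \Delta)}; t)$ to equal $\eta(\ell^\Phi_\Delta; t)$.

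The general case of Statement (1) follows immediately: by Lemma~\ref{lem:SimpleSheafOfLinkIsShiftInDegree},
\[
P^t(\ov{\cL_\sigma(\Delta)}; t) = P^t(\ov{\cL_{\lk_\Delta \sigma}(\lk_\Delta \sigma)}; t^{2^{\dim \sigma}}),
\]
and since $\lk_\Delta \sigma$ is quasi-convex by Proposition~\ref{p:linkofqcisqc}, the special case applied to $\lk_\Delta \sigma$ completes the identification after substitution $t \mapsto t^{2^{\dim \sigma}}$.

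For Statement (2), I would apply the decomposition theorem for pure sheaves to $\pi_* \cL_\Delta$ on $\langle \sigma \rangle$, yielding
\[
P^t(\ov{\cL_\Delta(\Delta)}; t) = \sum_{\tau \leq \sigma} L^t(\pi_* \cL_\Delta, \tau; t)\cdot P^t(\ov{\cL_\tau(\langle \sigma \rangle)}; t).
\]
Statement (1) rewrites the left-hand side as $\eta(\Phi_\Delta; t)$ and each $P^t(\ov{\cL_\tau(\langle \sigma \rangle)}; t)$ as $\eta(\Phi_{\lk_{\langle \sigma \rangle}\tau}; t^{2^{\dim \tau}})$. Applying $\eta$ term-by-term to the combinatorial decomposition \eqref{eq:cddecomp} produces a sum of exactly the same shape. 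By induction on $\dim \sigma - \dim \tau$, one solves for the top term and uses the injectivity of $\eta$ (Proposition~\ref{p:etaIsInjective}) to conclude $L^t(\pi_* \cL_\Delta, \sigma; t) = \eta(\ell^\Phi_\Delta(c,d); t)$.

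The main obstacle will be the inductive step for Statement (1): verifying that the reduced restriction sequence remains short exact and that the duality pairing on the $t$-Poincar\'e level cleanly isolates the local term $\eta(\ell^\Phi_\Delta; t)$. Getting the normalizations right so that the substitution $t \mapsto t^{2^{\dim \sigma}}$ in $\eta$ matches the grading shift arising from the tensor factor $C_{\dim \sigma}$ in Lemma~\ref{lem:SimpleSheafOfLinkIsShiftInDegree} is the key bookkeeping step.
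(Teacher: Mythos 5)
Your treatment of Statement (2) via matching the decomposition theorem for $\pi_*\cL_\Delta$ against the $cd$-decomposition \eqref{eq:cddecomp} would work, but it is more roundabout than the paper's: the paper writes $L^t(\pi_*\cL_\Delta, \sigma; t) = P^t(\ov{\pi_*\cL_\Delta(\sigma)}; t) - P^t(\ov{\pi_*\cL_\Delta(\partial\sigma)}; t)$ directly from the fact that $K_\sigma$ is the kernel of the surjection $\ov{\res}$, then applies Statement (1) twice and the definition of $\ell^\Phi_\Delta$. Your reduction of the general case of (1) to $\sigma = o$ via Lemma~\ref{lem:SimpleSheafOfLinkIsShiftInDegree} and Proposition~\ref{p:linkofqcisqc} is the same as the paper's.

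However, there is a genuine gap in your inductive step for Statement (1) in the non-complete case. The short exact sequence of modules
\[
0 \to \cL_\Delta(\Delta, \partial\Delta) \to \cL_\Delta(\Delta) \to \cL_\Delta(\partial\Delta) \to 0
\]
does \emph{not} remain exact after applying $\ov{(-)} = (-)\otimes_{C_d} C_d/\mmax$, because $\cL_\Delta(\partial\Delta)$ is not flat over $C_d$ (it has Krull dimension $d-1$, so $\Tor_1^{C_d}(\cL_\Delta(\partial\Delta), C_d/\mmax)\neq 0$). Concretely, take $d = 1$ and $\Delta = \langle\sigma\rangle$ for a single ray $\sigma$. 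Then $\cL_\Delta(\Delta) = \R[x_1]$, $\cL_\Delta(\Delta,\partial\Delta) = (x_1)$, $\cL_\Delta(\partial\Delta) = \R$, and the three reductions have $t$-Poincar\'e polynomials $1$, $t$, and $1$ respectively; the would-be additivity $P^t(\ov{\cL_\Delta(\Delta)}) = P^t(\ov{\cL_\Delta(\Delta,\partial\Delta)}) + P^t(\ov{\cL_\Delta(\partial\Delta)})$ reads $1 = t + 1$, which is false. (The duality $P^t(\ov{\cL_\Delta(\Delta,\partial\Delta)};t) = t^{\tdim\Delta} P^t(\ov{\cL_\Delta(\Delta)};t^{-1})$ itself is fine, since those two modules are free; it is the splitting across the SES that fails.) Even granting the splitting, the functional equation $P(t) = \eta(\Phi_{\partial\Delta};t) + t^{\tdim\Delta}P(t^{-1})$ does not by itself determine $P$ nor identify the high-degree part with $\eta(\ell^\Phi_\Delta;t)$. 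The paper sidesteps all of this by invoking the ``capped'' poset $\partial\ov{\Delta}$ from Karu's proof of \cite[Lemma~3.1]{Kar06}, obtained by adjoining a single new maximal element with boundary $\partial\Delta$, which is the boundary of a Gorenstein$^*$ poset; the correct inclusion--exclusion relation is
\[
P^t(\ov{\cL_\Delta(\Delta)};t) = P^t(\ov{\cL_{\partial\ov{\Delta}}(\partial\ov{\Delta})};t) - P^t(\ov{\cL_\Delta(\partial\Delta)};t)\cdot t^{T(e_d)},
\]
which matches $\Phi_\Delta = \Phi_{\partial\ov{\Delta}} - \Phi_{\partial\Delta}\cdot c + \Phi_{\partial\Delta}$ from \cite[Lemma~3.5]{DKT20} under $\eta$. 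Your own flagged worry about ``verifying that the reduced restriction sequence remains short exact'' is exactly the crux, and the answer is that it does not.
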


\begin{proof}
For Statement \eqref{item:tP1link},
we first prove 
the case when $\sigma = o$ as follows.
If $\Delta$ is complete, or more generally, the boundary of a \textit{Gorenstein$^*$} poset,
then the result 
 follows immediately from \cite[Proposition~2.2]{Kar06}.
The result also holds if $\Delta$ is combinatorially equivalent to a complete fan in $\R^d$,
since 
by 
Remark~\ref{remark:cdPureSheavesDependPosetOnly}
combinatorially equivalent fans have equal $t$-Poincar\'e polynomials.

Suppose 
$\Delta$ is non-complete.
From the proof of \cite[Lemma~3.1]{Kar06} we have
\[
P^t(\ov{\cL_\Delta(\Delta)}; t)
=
\Big(
P^t(
\ov{
\cL_{\partial \ov{\Delta}}(\partial \ov{\Delta})
}
; t)
-
P^t(\ov{\cL_\Delta(\partial \Delta)}; t) \cdot 
(1 + t^{T(e_d)})
\Big)
+
P^t(\ov{\cL_\Delta(\partial \Delta)}; t),
\]
where
$\partial \ov{\Delta}$ is the boundary of a \textit{Gorenstein$^*$} poset, 
obtained by adding an element with boundary $\partial \Delta$ to $\Delta$ \cite[Section~3.1]{Kar06}.
Here, 
we use the theory of pure sheaves on a poset \cite[Section~2.4]{Kar06}.
On the other hand,
by \cite[Lemma~3.5]{DKT20},
the $cd$-index of $\Delta$ is given by 
\begin{align*}
\Phi_\Delta(c, d)
& =
\ell^\Phi_\Delta(c, d) 
+ 
\Phi_{\partial \Delta}(c, d)\\
& =
\big(
\Phi_{\partial \ov{\Delta}}(c, d)
-
\Phi_{\partial \Delta}(c, d) \cdot c
\big)
+ 
\Phi_{\partial \Delta}(c, d).
\end{align*}
Hence the result follows from 
applying $\eta$ and using 
the case when 
$\Delta$ is complete.

Now suppose $\sigma > o$.
Since $\lk_\Delta \sigma$ is quasi-convex by Proposition~\ref{p:linkofqcisqc},
we apply the case when $\sigma = o$
to $\lk_\Delta \sigma$ and get
\begin{align*}
\eta(\Phi_{\lk_\Delta \sigma}(c, d) ; t^{2^{\dim(\sigma)}}) 
& = 
P^t(\ov{\cL_{\lk_\Delta \sigma}(\lk_\Delta \sigma) }; t^{2^{\dim(\sigma)}})\\
& = P^t(\ov{\cL_\sigma(\Delta)}; t),
\end{align*}
where the last equality follows Lemma~\ref{lem:SimpleSheafOfLinkIsShiftInDegree}.

Statement \eqref{item:tPlocal} was proved in the proof of \cite[Theorem~2.7]{EK07}:
If $\Delta$ is supported on a cone $\sigma$, 
then $\Delta$ is a refinement of the fan $\langle\sigma\rangle$ and hence there is a natural fan subdivision $\pi\colon \Delta\to \langle\sigma\rangle$.
Recall that 
$K_\sigma = \ker(\ov{\res}
\colon 
\ov{\pi_*\cL_\Delta(\sigma)} \to \ov{\pi_*\cL_\Delta(\partial \sigma) }
)$
is the kernel of the surjective map $\ov{\res}$.
Thus we have
\begin{align*}
L^t(\pi_* \cL_\Delta, \sigma; t)
=
P^t(K_\sigma; t)
& = 
P^t (\ov{\pi_*\cL_\Delta(\sigma)}; t)
-
P^t (\ov{\pi_*\cL_\Delta(\partial \sigma)}; t)\\
& = 
P^t (\ov{\cL_\Delta(\Delta)}; t)
-
P^t (\ov{\cL_\Delta(\partial \Delta)}; t)\\
& = 
\eta(\Phi_\Delta(c, d); t)
-
\eta(\Phi_{\partial \Delta}(c, d); t)\\
& = 
\eta(\ell^\Phi_\Delta(c, d); t),
\end{align*}
where 
the second to the last 
equality
follows from 
Statement~\eqref{item:tP1link}.
\end{proof}

In the remainder of this subsection, 
we rephrase some known results for the $cd$-index in terms of the $t$-Poincar\'e polynomial.
We note that 
Proposition~\ref{p:tPoinPolyDegree}
and
Proposition~\ref{p:tPoinDual}
also hold for fans that are combinatorially equivalent to a quasi-convex fan, 
since by Remark~\ref{remark:cdPureSheavesDependPosetOnly}
combinatorially equivalent fans have equal $t$-Poincar\'e polynomials.

\begin{prop}\label{p:tPoinPolyDegree}
Let $\Delta$ be a quasi-convex fan in $\R^d$.
Then the $t$-Poincar\'e polynomial 
 $P^t(\ov{\cL_\Delta(\Delta)}; t)$ is of degree at most $\tdim(\Delta)$.
\end{prop}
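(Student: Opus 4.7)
The plan is to reduce the statement to a property of the linear map $\eta$. By Proposition~\ref{p:FactsAbouttPoin}\eqref{item:tP1link} applied to $\sigma = o$, we have
\[
P^t(\ov{\cL_\Delta(\Delta)}; t) = \eta\bigl(\Phi_\Delta(c, d); t\bigr).
\]
Since $\Delta$ is quasi-convex, it is purely $d$-dimensional, so $\tdim(\Delta) = 2^d - 1$. From the recursive definition $\Phi_\Delta = \ell^\Phi_\Delta + \Phi_{\partial \Delta}$ together with the homogeneity of $\ell^\Phi_\Delta$ in degree $d$ and of $\Phi_{\partial\Delta}$ in degree $d-1$, every homogeneous component of $\Phi_\Delta(c, d)$ has $cd$-degree at most $d$. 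Therefore, it suffices to prove the following general claim: for every $cd$-monomial $w$ of degree $n$, the $t$-degree of $\eta(w; t)$ is at most $2^n - 1$.

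I would establish this claim by induction on $n$. The base case $n = 0$ is immediate, since $\eta(1; t) = 1$ has $t$-degree $0 = 2^0 - 1$. For the inductive step, a $cd$-monomial $w$ of degree $n \geq 1$ can be written either as $w' c$ with $\deg(w') = n - 1$ or as $w' d$ with $\deg(w') = n - 2$. In the first case, the recursion defining $\eta$ gives
\[
\eta(w; t) = \eta(w'; t) \cdot (1 + t^{2^{n-1}}),
\]
so by the induction hypothesis the $t$-degree of the right-hand side is at most $(2^{n-1} - 1) + 2^{n-1} = 2^n - 1$. In the second case,
\[
\eta(w; t) = \eta(w'; t) \cdot \bigl(t^{2^{n-2}} + t^{2^{n-1}}\bigr),
\]
and the $t$-degree is at most $(2^{n-2} - 1) + 2^{n-1} = 3 \cdot 2^{n-2} - 1 < 2^n - 1$, again by the induction hypothesis.

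There is no substantive obstacle here: once the identification through $\eta$ is invoked, the statement reduces to a direct induction. The bound $2^n - 1$ is in fact sharp, attained by the monomial $c^n$, whose image $\eta(c^n; t) = \prod_{k=0}^{n-1} (1 + t^{2^k})$ has leading term $t^{2^n - 1}$. This is also consistent with the intuition that the highest multidegrees contributing to $\ov{\cL_\Delta(\Delta)}$ lie within the "cube" $\{0, 1\}^d$, whose top element is sent by $T$ to $\sum_{i=1}^{d} 2^{i-1} = 2^d - 1$.
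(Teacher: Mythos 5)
Your proposal is correct and takes essentially the same approach as the paper's proof: both pass through Proposition~\ref{p:FactsAbouttPoin} to identify $P^t(\ov{\cL_\Delta(\Delta)}; t)$ with $\eta(\Phi_\Delta(c,d); t)$ and then bound the degree of the latter. The paper simply asserts the degree bound $2^0 + \dots + 2^{d-1} = 2^d - 1$ by inspection of the recursion defining $\eta$, whereas you spell out that observation as an explicit induction on $cd$-degree; the content is the same.
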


\begin{proof}
By Proposition~\ref{p:FactsAbouttPoin}
we have 
$P^t(\ov{\cL_\Delta(\Delta)};t)
=
{ \eta}(\Phi_{\Delta }(c, d); t)
$.
Hence the result follows from computing the degree of the polynomial ${ \eta}(\Phi_\Delta(c, d); t)$, which is bounded above by 
$2^{0} + \dots + 2^{d-1} = 2^d - 1 =  \tdim(\Delta)$.
\end{proof}

\begin{prop}
\label{p:tPoinDual}
Let $\Delta$ be a quasi-convex fan in $\R^d$.
Then
\[
t^{\tdim(\Delta)} P^t(\ov{\cL_\Delta(\Delta)}; t^{-1})  = P^t(\ov{\cL_\Delta(\Delta, \partial \Delta)}; t).\]
\end{prop}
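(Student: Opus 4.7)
The plan is to reduce the claim to a direct bookkeeping computation built on the duality \eqref{eq:cdduality}. Since it is stated in Section~7.2 that $\cL_\Delta(\Delta)$ is a finitely generated $(\Z_{\geq 0})^d$-graded free $C_d$-module, I will begin by fixing a homogeneous decomposition
\[
\cL_\Delta(\Delta) \simeq \bigoplus_\alpha C_d[-j_\alpha], \qquad j_\alpha \in (\Z_{\geq 0})^d,
\]
with $\alpha$ running over a finite index set. Reducing modulo $\mmax$ gives $\ov{\cL_\Delta(\Delta)} \simeq \bigoplus_\alpha \R[-j_\alpha]$, and therefore
\[
P^t(\ov{\cL_\Delta(\Delta)}; t) = \sum_\alpha t^{T(j_\alpha)}, \qquad P^t(\ov{\cL_\Delta(\Delta)}; t^{-1}) = \sum_\alpha t^{-T(j_\alpha)}.
\]

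Next I will feed this decomposition into the duality \eqref{eq:cdduality}. Writing $e := e_1 + \dots + e_d$ and using that $\Hom_{C_d}$ commutes with finite direct sums in the first argument, the computation reduces to the single-summand identity $\Hom_{C_d}(C_d[-j], C_d[-e]) \simeq C_d[-(e-j)]$ of $(\Z_{\geq 0})^d$-graded $C_d$-modules, which is straightforward from the definition of the internal Hom. This produces
\[
\cL_\Delta(\Delta,\partial\Delta) \simeq \bigoplus_\alpha C_d[-(e - j_\alpha)],
\]
and reducing modulo $\mmax$ yields $\ov{\cL_\Delta(\Delta,\partial\Delta)} \simeq \bigoplus_\alpha \R[-(e - j_\alpha)]$.

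It then suffices to invoke the additivity of $T$ together with $T(e_i) = 2^{i-1}$ to conclude:
\[
P^t(\ov{\cL_\Delta(\Delta,\partial\Delta)}; t) = \sum_\alpha t^{T(e) - T(j_\alpha)} = t^{T(e)} \sum_\alpha t^{-T(j_\alpha)} = t^{\tdim(\Delta)} \, P^t(\ov{\cL_\Delta(\Delta)}; t^{-1}),
\]
since $T(e) = 1 + 2 + \dots + 2^{d-1} = 2^d - 1 = \tdim(\Delta)$.

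The only obstacle is bookkeeping: one must verify that each shift $e - j_\alpha$ actually lies in $(\Z_{\geq 0})^d$, so that the right-hand side of \eqref{eq:cdduality} is genuinely a $(\Z_{\geq 0})^d$-graded free $C_d$-module with the claimed decomposition. This is an upper bound on the generator degrees of $\cL_\Delta(\Delta)$ (each coordinate at most $1$), which is the $(\Z_{\geq 0})^d$-graded refinement of the $\Z_{\geq 0}$-graded degree bound in Corollary~\ref{c:bradenstalk}; it is also implicit in the formulation of \eqref{eq:cdduality} itself, so I expect no genuine difficulty here.
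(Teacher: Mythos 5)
Your proof is correct and takes essentially the same route as the paper: both rest on the duality \eqref{eq:cdduality} together with the observation that reducing modulo $\mmax$ turns $\Hom_{C_d}$ between free modules into $\Hom_\R$ of the reductions, shifted by $e_1 + \cdots + e_d$, and that $T(e_1 + \cdots + e_d) = 2^d - 1 = \tdim(\Delta)$. The only stylistic difference is that you fix an explicit decomposition $\cL_\Delta(\Delta) \simeq \oplus_\alpha C_d[-j_\alpha]$ while the paper stays basis-free by using $\ov{\Hom_{C_d}(M,N)} \simeq \Hom_\R(\ov{M},\ov{N})$ for free $M, N$; also, your closing worry about $e - j_\alpha \in (\Z_{\geq 0})^d$ is not actually needed to establish the displayed identity (it holds as a Laurent-polynomial identity in any case, and the left side is a polynomial by hypothesis), though as you note it is implicit in the formulation of \eqref{eq:cdduality}.
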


\begin{proof}
Recall that $\cL_\Delta(\Delta)$ and $\cL_\Delta(\Delta, \partial \Delta)$ are $(\Z_{\geq 0})^d$-graded
free $C_d$-modules dual to each other in the sense of \eqref{eq:cdduality}:
\[
\cL_\Delta(\Delta, \partial \Delta)
 = 
 \Hom_{C_d} \big(
 \cL_\Delta(\Delta), C_d[-e_1 - \dots - e_d]
\big).
\]
Since both $\cL_\Delta(\Delta)$ and $C_d[-e_1 - \dots - e_d]$ are free $C_d$-modules,
we have
\begin{align*}
\ov{\cL_\Delta(\Delta, \partial \Delta)}
& = 
\ov{
\Hom_{C_d} 	
\big(
\cL_\Delta(\Delta), C_d[-e_1 - \dots - e_d]
\big)
}\\
& =
\Hom_{\R} 
\big(
\ov{\cL_\Delta(\Delta)}
, \ov{C_d[-e_1 - \dots - e_d]}
\big).
\end{align*}
The result then follows from
$
P^t(\ov{C_d[-e_1 - \dots
 -e_d]} ; t )
 =
 t^{T(e_1 + \dots + e_d)} 
 =
 t^{\tdim(\Delta)}$. 
\end{proof}

\begin{prop}\label{p:tPBraden}
Let $\Delta$ be a fan in $\R^d$.
For cones $\tau > \sigma$, the following are true:
\begin{enumerate}
\item \label{item:parta}
The module $\cL_\sigma(\tau)$ is generated in $t$-degrees strictly less than $(\tdim(\tau) - \tdim(\sigma))/2$.

\item \label{item:partb}
The module $\cL_\sigma(\tau, \partial \tau) := \ker(\res\colon \cL_\sigma(\tau)\to \cL_\sigma(\partial \tau))$ is generated in $t$-degrees strictly greater than $(\tdim(\tau) - \tdim(\sigma))/2$.
\end{enumerate}
\end{prop}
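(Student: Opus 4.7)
The plan is to reduce both parts to the case $\sigma = o$ via Lemma~\ref{lem:SimpleSheafOfLinkIsShiftInDegree} (applied with $\Delta = \langle\tau\rangle$), and then to prove the reduced statements using combinatorial equivalence combined with the degree bound of Proposition~\ref{p:tPoinPolyDegree} for part~(\ref{item:parta}), and with the duality of Proposition~\ref{p:tPoinDual} for part~(\ref{item:partb}).

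First, applying Lemma~\ref{lem:SimpleSheafOfLinkIsShiftInDegree} with $\Delta = \langle\tau\rangle$ gives
\[
P^t(\ov{\cL_\sigma(\tau)}; t) = P^t(\ov{\cL_{\langle\ov\tau\rangle}(\langle\ov\tau\rangle)}; t^{2^{\dim(\sigma)}})
\]
together with its analogue for the relative modules, where $\ov\tau \subseteq \R^d/\Span(\sigma)$ is the image of $\tau$, a cone of dimension $m := \dim(\tau) - \dim(\sigma)$. By Nakayama's lemma, the minimal generator $t$-degrees of a finitely generated graded module coincide with the support of its reduction modulo $\mmax$, so it suffices to bound the support of the $t$-Poincar\'e polynomials. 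The substitution $t \mapsto t^{2^{\dim(\sigma)}}$ converts a bound at $\tdim(\ov\tau)/2 = (2^m - 1)/2$ for the right-hand side into the claimed bound at $(\tdim(\tau) - \tdim(\sigma))/2$ for the left. Hence I reduce to the case $\sigma = o$ and $\tau$ of dimension $n > 0$.

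For part~(\ref{item:parta}) with $\sigma = o$: the local minimal extension property at $\tau$ gives $\ov{\cL_o(\tau)} \simeq \ov{\cL_o(\partial\tau)}$, and the restriction $\cL_o|_{\partial\tau}$ satisfies the defining properties of the minimal extension sheaf on $\partial\tau$, so equals $\cL_{\partial\tau}$ by uniqueness. Realizing $\tau$ as the cone over an $(n-1)$-dimensional polytope $P$ (take the intersection of $\tau$ with an affine hyperplane meeting every ray) exhibits $\partial\tau$ as combinatorially equivalent to the face fan $\Sigma_P$, a complete (hence quasi-convex) fan in $\R^{n-1}$. By Remark~\ref{remark:cdPureSheavesDependPosetOnly} the $t$-Poincar\'e polynomials coincide, and Proposition~\ref{p:tPoinPolyDegree} applied to $\Sigma_P$ bounds the $t$-degree by $\tdim(\Sigma_P) = 2^{n-1} - 1 < (2^n - 1)/2 = \tdim(\tau)/2$.

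For part~(\ref{item:partb}) with $\sigma = o$: since $\langle\tau\rangle$ is quasi-convex in $\Span(\tau)$, Proposition~\ref{p:tPoinDual} gives
\[
t^{\tdim(\tau)} P^t(\ov{\cL_o(\tau)}; t^{-1}) = P^t(\ov{\cL_o(\tau, \partial\tau)}; t),
\]
and by part~(\ref{item:parta}) the left-hand side is supported in degrees at least $\tdim(\tau) - (2^{n-1} - 1) = 2^{n-1}$, which exceeds $\tdim(\tau)/2$. The principal subtle point in this argument is the combinatorial equivalence of $\partial\tau$ with a complete fan in $\R^{n-1}$, as only then do Remark~\ref{remark:cdPureSheavesDependPosetOnly} and Proposition~\ref{p:tPoinPolyDegree} combine to give the needed degree bound.
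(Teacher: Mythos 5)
Your proof is correct and takes essentially the same route as the paper: reduce to $\sigma = o$ via Lemma~\ref{lem:SimpleSheafOfLinkIsShiftInDegree} (the paper carries out the reduction via $\lk_\Delta\sigma$, you apply the lemma directly to $\langle\tau\rangle$, but these are the same computation), prove part~(\ref{item:parta}) by using that $\partial\tau$ is combinatorially equivalent to a complete fan in $\R^{\dim(\tau)-1}$ together with Remark~\ref{remark:cdPureSheavesDependPosetOnly} and Proposition~\ref{p:tPoinPolyDegree}, and deduce part~(\ref{item:partb}) from part~(\ref{item:parta}) via the duality of Proposition~\ref{p:tPoinDual} applied to the quasi-convex fan $\langle\tau\rangle$.
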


\begin{proof}
It suffices to show that 
the polynomial
$P^t( \ov{\cL_\sigma(\tau)} ;  t)$
is of degree strictly less than 
$(\tdim(\tau) - \tdim(\sigma))/2$
and 
the lowest degree term in 
$P^t( \ov{\cL_\sigma(\tau, \partial \tau)} ;  t)$
is of degree strictly greater than $(\tdim(\tau) - \tdim(\sigma))/2$.
By Lemma~\ref{lem:SimpleSheafOfLinkIsShiftInDegree} 
we have
\begin{align*}
P^t( \ov{\cL_\sigma(\tau)} ;  t) &=
P^t(\ov{\cL_{\lk_{\Delta} \sigma} (\ov{\tau})}; t^{2^{\dim(\sigma)}}),\\
P^t( \ov{\cL_\sigma(\tau, \partial \tau)} ; t )
&  = P^t(\ov{\cL_{\lk_{\Delta} \sigma} (\ov{\tau}, \partial \ov{\tau})}; t^{2^{\dim(\sigma)}}).
\end{align*}
Since we have
\begin{align*}
\big(
\tdim(\tau) - \tdim(\sigma) 
\big)
/ 2
& = 
\big(
(2^{\dim(\ov{\tau}) + \dim(\sigma)}  - 1) - (2^{\dim(\sigma)} - 1)
\big)
/ 2
\\
& =
2^{\dim(\sigma)} \cdot
\tdim({\ov{\tau}}) / 2,
\end{align*}
the case when $\sigma > o$ follows 
from 
the case when $\sigma = o$ for the fan $\lk_\Delta \sigma$.

We prove the proposition for the case when $\sigma = o$ as follows. 
Let $\tau > o$ be a cone. 
For Statement~\eqref{item:parta}, 
by the definition of a simple sheaf, 
we have 
$P^t(\ov{\cL_\Delta(\tau)}; t) = P^t(\ov{\cL_\Delta(\partial \tau)}; t)$.
Since $\partial \tau$ is combinatorially 
equivalent to a complete fan,
by 
Proposition~\ref{p:tPoinPolyDegree} 
the polynomial 
$P^t(\ov{\cL_\Delta(\partial \tau)}; t)$
is of degree at most $\tdim(\partial \tau) = 
2^{\dim(\tau) - 1} - 1$,
which is strictly less than $\tdim(\tau)/2 = (2^{\dim(\tau) } - 1)/2$.
For Statement~\eqref{item:partb}, 
since $\langle \tau \rangle$ is a quasi-convex fan in $\Span(\tau)$, 
by Proposition~\ref{p:tPoinDual} we have
\[
t^{\tdim(\tau)} P^t(\ov{\cL_\Delta(\tau, \partial \tau)}; t^{-1}) = P^t(\ov{\cL_\Delta(\tau)}; t),
\]
which by Statement~\eqref{item:parta} is a polynomial of degree strictly less than $\tdim(\tau) /2$.
Hence the lowest degree term in $P^t(\ov{\cL_\Delta(\tau, \partial \tau)}; t)$
is of degree strictly greater than $\tdim(\tau) /2$.
\end{proof}

\begin{remark}
For a quasi-convex fan $\Delta$,
one can prove 
the following recursion relation:
\[
t^{\tdim(\Delta)}
P^t(\ov{\cL_\Delta(\Delta)}; t^{-1})  =
\sum_{\sigma\in \Delta} P^t(\ov{\cL_\Delta(\sigma)}; t)
\prod_{i=\dim(\sigma) + 1}^{d}
(t^{T(e_i)}-1);
\]
see also \cite[p.~709]{Kar06}.
This recursion relation is analogous to the definition of the toric $h$-polynomial,
and together with 
 Proposition~\ref{p:tPBraden} 
characterizes 
$P^t(\ov{\cL_\Delta(\Delta)}; t)$ for any quasi-convex fan $\Delta$. 
\end{remark}

\subsection{Weight sheaves for the \texorpdfstring{$cd$}{cd}-index}

For a pure sheaf $\cF$ on a fan $\Delta$ in $\R^d$
and an integer $r\in \Z$, 
we define $\cW^r \cF$, 
a subsheaf of $(\Z_{\geq 0})^d$-graded $\cC$-modules of $\cF$, called the \emph{$r$-weight sheaf} of $\cF$ as follows:
for each cone $\sigma$, we set
\[
\cW^r \cF (\sigma) = \begin{cases}
\cF(\sigma) & \text{if } r \leq 0, \\
\res^{-1}(\cW^r \cF(\partial \sigma)) & \text{if } 0<r \leq \tdim(\sigma),\\
\sum_{i=1}^d x_i \cW^{r-2 \cdot \tdeg(x_i)}\cF(\sigma)
 & \text{if } r > \tdim(\sigma).
\end{cases}
\]
In particular,
on an open set $U$, we have
\[
\cW^r\cF(U) = \{ (f_\sigma)_{\sigma \in U} :
f_\sigma\in \cW^r\cF(\sigma) ,
\res_{\sigma, \tau}( f_{\sigma}) = f_\tau \text{ for all } \tau \leq \sigma
\}.\]
We prove that the weight sheaves are indeed well-defined subsheaves of $\cF$ in Proposition~\ref{p:cdWeightSheavesAreSubsheaves}.
For now, 
we consider each weight sheaf
an assignment to each open set $U$
of a $\cC(U)$-submodule of $\cF(U)$.

\begin{prop}\label{p:cdMultiplyxiIncreaseTwo}
For every open set $U$, integer $r$ and $1\leq i\leq d$, we have
\[
x_i \cW^{r-2 \cdot \tdeg(x_i)}\cF(U) \subseteq \cW^{r}\cF(U).
\]
\end{prop}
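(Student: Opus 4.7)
The proof will mirror that of Proposition~\ref{p:madds2}. Since $\cW^r\cF(U)$ for a general open set $U$ is determined by compatible tuples of elements in the stalks $\cW^r\cF(\sigma)$ for $\sigma \in U$ and since multiplication by $x_i$ acts entrywise on such tuples, it suffices to prove the inclusion for $U = \langle \sigma \rangle$ with $\sigma$ an arbitrary cone in $\Delta$. I will induct on $\dim(\sigma)$. The base case $\sigma = o$ is immediate: the canonical ring map $C_d \to \cC(o) = \R$ sends every $x_i$ to zero, so $x_i$ annihilates $\cF(o)$ and the left-hand side vanishes.

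For the inductive step I will first clear three easy subcases. If $i > \dim(\sigma)$, then the map $C_d \to \cC(\sigma) = \R[x_1,\dots,x_{\dim(\sigma)}]$ still kills $x_i$, so the left-hand side vanishes. If $r \leq 0$, then by definition $\cW^r\cF(\sigma) = \cF(\sigma)$ and the inclusion is automatic. If $r > \tdim(\sigma)$, the inclusion follows directly from the definition
\[
\cW^r\cF(\sigma) = \sum_{j=1}^d x_j \cW^{r - 2\tdeg(x_j)}\cF(\sigma),
\]
which already contains the $j = i$ summand.

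The essential case is $i \leq \dim(\sigma)$ and $0 < r \leq \tdim(\sigma)$. Because $\tdeg(x_i) \geq 1$ one has $r - 2\tdeg(x_i) < r \leq \tdim(\sigma)$, so the two defining clauses for $\cW^{r - 2\tdeg(x_i)}\cF(\sigma)$ (either the trivial clause $s \leq 0$ combined with flabbiness of $\cF$, or the preimage clause $0 < s \leq \tdim(\sigma)$ combined with the surjectivity of $\res$) together yield
\[
\res\bigl(\cW^{r - 2\tdeg(x_i)}\cF(\sigma)\bigr) = \cW^{r - 2\tdeg(x_i)}\cF(\partial \sigma).
\]
Since $\res$ is a graded $C_d$-module map it commutes with multiplication by $x_i$, so $\res\bigl(x_i\cW^{r - 2\tdeg(x_i)}\cF(\sigma)\bigr) = x_i\cW^{r - 2\tdeg(x_i)}\cF(\partial \sigma)$. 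The induction hypothesis, applied to each $\tau \in \partial \sigma$, then gives $x_i\cW^{r - 2\tdeg(x_i)}\cF(\partial \sigma) \subseteq \cW^r\cF(\partial \sigma)$, and taking the preimage under $\res$ produces $x_i\cW^{r - 2\tdeg(x_i)}\cF(\sigma) \subseteq \res^{-1}(\cW^r\cF(\partial \sigma)) = \cW^r\cF(\sigma)$. The only bookkeeping subtlety is tracking which $x_i$ act nontrivially on each stalk; once this is isolated in its own subcase, the argument is a direct adaptation of the earlier proof and I do not anticipate any genuine obstacle.
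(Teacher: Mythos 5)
Your proposal is correct and follows essentially the same strategy as the paper: reduce to $U=\langle\sigma\rangle$, induct on $\dim(\sigma)$, dispatch $r\leq 0$ and $r>\tdim(\sigma)$ directly, and in the middle range use surjectivity of $\res$ to compute $\res\bigl(x_i\cW^{r-2\tdeg(x_i)}\cF(\sigma)\bigr)$ and then pull back through $\res^{-1}$ using the induction hypothesis on $\partial\sigma$. The only cosmetic difference is that you isolate the subcase $i>\dim(\sigma)$ where $x_i$ acts as zero; the paper's argument does not need this separation since the inclusion is trivially preserved there, but your extra care is harmless.
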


\begin{proof}
The proof is similar to that of Proposition~\ref{p:madds2}:
We want to prove that for each $1\leq i \leq d$,
\[
(f_\sigma)_{\sigma\in U} \in \cW^{r-2\cdot \tdeg(x_i)}\cF(U) \implies
(x_i  f_\sigma)_{\sigma\in U} \in \cW^{r}\cF(U).
\]
It suffices to prove the statement for $U = \langle \sigma \rangle$ whenever $\sigma$ is a cone, which we do by induction on $\dim(\sigma)$.
The statement is clear for the zero cone $o$.
By induction we may assume the statement for $U = \partial \sigma$.

The case when $r\leq 0$ or $r> \tdim(\sigma)$ is clear.
Suppose $0 < r \leq \tdim(\sigma)$.
Then for each $1\leq i \leq d$
we have
\[
\res \big(x_i \cW^{r-2\cdot \tdeg(x_i)}\cF(\sigma) \big)
=
\begin{cases}
x_i \res(\cF(\sigma))
& \text{if } 0 < r \leq 2 \cdot \tdeg(x_i) \\
x_i \res(\res^{-1}(\cW^{r-2\cdot \tdeg(x_i)}\cF(\partial \sigma)))
& \text{otherwise.}
\end{cases}
\]
Since $\res$ is surjective,
we have $\res(\cF(\sigma)) = \cF(\partial \sigma) = \cW^{r-2\cdot \tdeg(x_i)} \cF(\partial \sigma)$ when $0 < r \leq 2 \cdot \tdeg(x_i)$.
Hence for $0 < r \leq \tdim(\sigma)$ we have
\[
\res(x_i \cW^{r-2\cdot \tdeg(x_i)}\cF(\sigma))
=
x_i \cW^{r-2\cdot \tdeg(x_i)}\cF(\partial \sigma).
\]
By the induction hypothesis, we also have $x_i \cW^{r-2\cdot \tdeg(x_i)} \cF(\partial \sigma) \subseteq \cW^{r}\cF(\partial \sigma)$.
Now, 
by taking the preimage,
we have 
\begin{align*}
x_i \cW^{r-2\cdot \tdeg(x_i)}\cF(\sigma)
& \subseteq 
\res^{-1}
\big(
\res
(
x_i \cW^{r-2\cdot \tdeg(x_i)}\cF(\sigma)
)
\big) \\
& \subseteq 
\res^{-1}
\big(
x_i 
\cW^{r-2\cdot \tdeg(x_i)} \cF(\partial \sigma)
\big) \\
& \subseteq 
\res^{-1}
\big(
\cW^r \cF(\partial \sigma)
\big) \\
& =
\cW^r \cF(\sigma).
\qedhere
\end{align*}
\end{proof}

We check the well-definedness of the weight sheaves.

\begin{prop}\label{p:cdWeightSheavesAreSubsheaves}
The assignment $\cW^r \cF$ together with the restriction maps of $\cF$ form a sheaf of $(\Z_{\geq 0})^d$-graded $\cC$-modules.
\end{prop}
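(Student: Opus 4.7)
The plan is to mirror the proof of Proposition~\ref{p:weightissheaf} and reduce everything to a single stalk-wise inclusion. Once we know that for every cone $\sigma$ and every integer $r$ the restriction map of $\cF$ carries $\cW^r\cF(\sigma)$ into $\cW^r\cF(\partial\sigma)$, compatibility along general inclusions of open subfans is automatic (factor through the basic opens $\langle\sigma\rangle$), and the sheaf axioms for $\cW^r\cF$ are inherited from those for $\cF$. The $(\Z_{\geq 0})^d$-graded $\cC(\sigma)$-submodule structure of each stalk $\cW^r\cF(\sigma)$ is clear by construction: in each of the three clauses of the definition we take either $\cF(\sigma)$ itself, a preimage $\res^{-1}$ of a homogeneous $\cC(\partial\sigma)$-submodule under the graded $\cC$-linear map $\res$, or a sum $\sum_i x_i(\cdot)$ of homogeneous $\cC(\sigma)$-submodules; each of these is a homogeneous $\cC(\sigma)$-submodule of $\cF(\sigma)$.

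So the only content is the inclusion
\[
\res_\cF\big(\cW^r\cF(\sigma)\big)\subseteq \cW^r\cF(\partial\sigma),
\]
which I would prove by induction on $r$. The cases $r\leq 0$ (where $\cW^r\cF=\cF$) and $0<r\leq \tdim(\sigma)$ (where the stalk is defined as the $\res$-preimage of $\cW^r\cF(\partial\sigma)$) are immediate from the definitions. For $r>\tdim(\sigma)$, I would expand
\[
\res_\cF\big(\cW^r\cF(\sigma)\big) = \res_\cF\Bigl(\sum_{i=1}^d x_i\,\cW^{r-2\cdot\tdeg(x_i)}\cF(\sigma)\Bigr) = \sum_{i=1}^d x_i\,\res_\cF\bigl(\cW^{r-2\cdot\tdeg(x_i)}\cF(\sigma)\bigr),
\]
using $\cC$-linearity of $\res_\cF$. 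The inductive hypothesis then yields
$\res_\cF(\cW^{r-2\cdot\tdeg(x_i)}\cF(\sigma))\subseteq \cW^{r-2\cdot\tdeg(x_i)}\cF(\partial\sigma)$
for each $i$, and Proposition~\ref{p:cdMultiplyxiIncreaseTwo} applied on $U=\partial\sigma$ gives $x_i\,\cW^{r-2\cdot\tdeg(x_i)}\cF(\partial\sigma)\subseteq \cW^r\cF(\partial\sigma)$, completing the induction.

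No genuine obstacle is expected; this is essentially a repetition of the $\Z_{\geq 0}$-graded argument with the role of $\mmax$ replaced by the finite collection $\{x_1,\dots,x_d\}$ of homogeneous generators and the degree bound $\dim(\sigma)$ replaced by the $t$-degree bound $\tdim(\sigma)$. The only point requiring a moment of care is that the recursive clause for $r>\tdim(\sigma)$ is now a sum over $i$ rather than a single product by $\mmax$, so that the inductive step needs Proposition~\ref{p:cdMultiplyxiIncreaseTwo} applied separately to each summand; but this is precisely the statement of that proposition.
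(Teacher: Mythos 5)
Your proof is correct and follows exactly the same route as the paper: reduce to the stalk-wise inclusion $\res_\cF(\cW^r\cF(\sigma))\subseteq \cW^r\cF(\partial\sigma)$, induct on $r$, observe the cases $r\leq\tdim(\sigma)$ are immediate, and for $r>\tdim(\sigma)$ expand by $\cC$-linearity and invoke the induction hypothesis together with Proposition~\ref{p:cdMultiplyxiIncreaseTwo} on each summand. You spell out the reduction to basic opens and the graded-submodule structure a bit more explicitly, but the argument is the same.
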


\begin{proof}
The proof is similar to that of Proposition~\ref{p:weightissheaf}:
It remains to show that the restriction map $\res_\cF\colon \cF(\sigma) \to \cF(\partial \sigma)$ satisfies
\[
\res_\cF(\cW^r\cF(\sigma)) \subseteq \cW^r\cF(\partial \sigma)
\]
for every cone $\sigma$ and every integer $r$,
which we prove by induction on $r$.
This is clear if $r\leq \tdim(\sigma)$.
For $r>\tdim (\sigma)$, we have
\begin{align*}
\res_{\cF}
\big(
\sum_{i=1}^d x_i \cW^{r-2 \cdot \tdeg(x_i)}\cF(\sigma)
\big) 
& =
\sum_{i=1}^d x_i
\res_{\cF}
\big(
\cW^{r-2 \cdot \tdeg(x_i)}\cF(\sigma)
\big)
\\
& \subseteq 
\sum_{i=1}^d 
x_i
\cW^{r-2 \cdot \tdeg(x_i)}\cF(\partial \sigma)\\
& \subseteq\cW^r\cF(\partial \sigma),
\end{align*}
where the first inclusion follows from the induction hypothesis and the second inclusion follows from Proposition~\ref{p:cdMultiplyxiIncreaseTwo}.
\end{proof}

We prove a characterization of the weight sheaves of a shifted simple sheaf, analogous to Proposition~\ref{p:charofweight}, as follows.
For a $(\Z_{\geq 0})^d$-graded vector space $M$, 
let $M^{\tdeg \geq r}$, $M^{\tdeg \leq r}$ and $M^{\tdeg = r}$
be the vector subspaces of $M$ generated by homogeneous elements of $t$-degree at least $r$, at most $r$ and exactly $r$, respectively.

\begin{prop}\label{p:cdcharofweight}
Let $\cF = \cL_\sigma[-{j}]$ be a shifted simple sheaf on a fan $\Delta$, where ${j} \in (\Z_{\geq 0})^d$.
Then for every open set $U$ and every integer $r$ we have
\[
\cW^r \cF(U) =
\cF(U)^{\tdeg \geq T({j})+\frac{r - \tdim(\sigma)}{2}}.
\]
\end{prop}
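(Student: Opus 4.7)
The plan is to mirror the proof of Proposition~\ref{p:charofweight}, replacing degree by $t$-degree, $\dim$ by $\tdim$, $\mmax$ by the sum $\sum_i x_i(\,\cdot\,)$, and Corollary~\ref{c:bradenstalk} by Proposition~\ref{p:tPBraden}. As before, I first observe that both sides of the desired equality commute with taking sections over a union of cones (the weight sheaf is defined stalkwise via restrictions, and the $t$-degree filtration is a filtration by subsheaves), so it suffices to prove the statement when $U = \langle \tau \rangle$ for a cone $\tau \geq \sigma$. I would then induct on $\dim(\tau)$, and within each step induct on $r$.

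For the base case $\tau = \sigma$, we have $\cF(\sigma) = \cC(\sigma)[-j] \simeq \R[x_1,\dots,x_{\dim(\sigma)}][-j]$, whose minimum $t$-degree is $T(j)$. When $r \leq \tdim(\sigma)$ the recursive definition gives $\cW^r\cF(\sigma) = \cF(\sigma)$, and $(r-\tdim(\sigma))/2 \leq 0$ so the $t$-degree cut-off is $\leq T(j)$, matching. When $r > \tdim(\sigma)$, the recursive definition gives $\cW^r\cF(\sigma) = \sum_{i=1}^d x_i \cW^{r-2\tdeg(x_i)}\cF(\sigma)$, which by the inner induction equals $\sum_i x_i\cF(\sigma)^{\tdeg \geq T(j)+\tfrac{r-2\tdeg(x_i)-\tdim(\sigma)}{2}}$. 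The inclusion $\subseteq \cF(\sigma)^{\tdeg \geq T(j)+\tfrac{r-\tdim(\sigma)}{2}}$ is automatic by adding $\tdeg(x_i)$ to the bound. For the reverse inclusion, a homogeneous monomial $x^\alpha \in \cC(\sigma)[-j]$ with $T(\alpha) \geq (r-\tdim(\sigma))/2 > 0$ must satisfy $\alpha_i \geq 1$ for some $i \leq \dim(\sigma)$, so $x^\alpha = x_i \cdot x^{\alpha - e_i}$ with $x^{\alpha-e_i}$ landing in the appropriate shifted weight piece.

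For the inductive step $\tau > \sigma$, the case $r \leq 0$ is clear as in the base case. For $0 < r \leq \tdim(\tau)$, using the induction on dimension on $\partial \tau$ and the fact that restriction is homogeneous of $t$-degree $0$, we get
\[
\cW^r\cF(\tau) = \res^{-1}(\cW^r \cF(\partial\tau)) = \cF(\tau,\partial\tau) + \cF(\tau)^{\tdeg \geq T(j)+\tfrac{r-\tdim(\sigma)}{2}}.
\]
Proposition~\ref{p:tPBraden}(\ref{item:partb}) tells us $\cF(\tau,\partial\tau)$ is generated in $t$-degrees strictly greater than $T(j) + (\tdim(\tau)-\tdim(\sigma))/2 \geq T(j) + (r-\tdim(\sigma))/2$, so the first summand is absorbed and the claim follows. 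For $r > \tdim(\tau)$, I would again use the recursive definition and the inner induction to reduce to showing
\[
\cF(\tau)^{\tdeg \geq T(j)+\tfrac{r-\tdim(\sigma)}{2}} \subseteq \sum_{i=1}^d x_i\cF(\tau)^{\tdeg \geq T(j)+\tfrac{r-2\tdeg(x_i)-\tdim(\sigma)}{2}};
\]
given a homogeneous $f$ on the left, Proposition~\ref{p:tPBraden}(\ref{item:parta}) guarantees that the $\cC(\tau)$-coefficients of $f$ with respect to a generating set of $\cL_\sigma(\tau)$ in low $t$-degrees must themselves have strictly positive $t$-degree, hence each such coefficient factors through some $x_i$, and the inductive hypothesis places the quotient in the required weight piece.

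The main technical point is really the last inclusion, where one needs the mild linear-algebra fact that a homogeneous element of $\cC(\tau)$ of positive $t$-degree lies in the ideal $(x_1,\dots,x_{\dim(\tau)})$ and thus in $\sum_i x_i \cC(\tau)$. Because the generators of $\cL_\sigma(\tau)$ are cut off in $t$-degree by $(\tdim(\tau)-\tdim(\sigma))/2$, while the threshold $(r-\tdim(\sigma))/2$ exceeds this, every element of the left-hand side can be written with coefficients of strictly positive $t$-degree, and so can be divided by some $x_i$. No other step should pose difficulty, and once this is in place, the two inductions close in the same way as in Proposition~\ref{p:charofweight}.
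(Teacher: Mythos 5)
Your proposal is correct and follows essentially the same strategy as the paper: reduce to stalks over cones $\langle\tau\rangle$, do an outer induction on $\dim(\tau)$ and an inner induction on $r$, and invoke Proposition~\ref{p:tPBraden} both to absorb $\cF(\tau,\partial\tau)$ in the middle range of $r$ and to justify the $\mmax$-factorization when $r>\tdim(\tau)$. The only difference is that you spell out the base case $\tau=\sigma$ (including the factorization of a positive-$t$-degree monomial through some $x_i$), which the paper dismisses as trivial; this is a reasonable addition and your argument there is sound.
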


\begin{proof}
The proof is similar to that of Proposition~\ref{p:charofweight}:
It suffices to prove the statement for each $U = \langle \tau\rangle$ whenever $\tau$ is a cone,
which we do by induction on $\dim(\tau)$.
The case when $\tau = \sigma$ is trivial.

For a cone $\tau > \sigma$, by induction we may assume the statement for $U = \partial \tau$.
We further induct on $r$, 
and prove the base case as follows. 
The case when $r\leq 0$ is clear. 
Suppose $0 < r \leq \tdim(\tau)$.
Since the restriction map is homogeneous of degree $0$
and 
$\cW^r \cF(\partial \tau) = \cF(\partial \tau)^{\tdeg \geq T(j) + \frac{r-\tdim(\sigma)}{2}}$ by the induction hypothesis of the first induction, 
we have
\begin{align*}
\cW^r\cF(\tau) =
\res^{-1}(\cW^r\cF(\partial \tau))
& =
\cF(\tau)^{\tdeg \geq T({j})+\frac{r - \tdim(\sigma)}{2}}
+
\cF(\tau, \partial \tau)\\
& =
\cF(\tau)^{\tdeg \geq T({j})+\frac{r - \tdim(\sigma)}{2}},
\end{align*}
where 
the last equality follows from Proposition~\ref{p:tPBraden}.

Suppose $r> \tdim(\tau)$.
For each $1\leq i \leq d$,
by considering the $t$-degrees, 
we have
\[
x_i \cF(\tau)^{\tdeg \geq T({j})+\frac{r - \tdim(\sigma)}{2} - \tdeg(x_i)}
\subseteq
\cF(\tau)^{\tdeg \geq T({j})+\frac{r - \tdim(\sigma)}{2}}.
\]
Thus we have
\[
\sum_{i=1}^d x_i \cF(\tau)^{\tdeg \geq T({j})+\frac{(r -2\cdot \tdeg(x_i) )- \tdim(\sigma)}{2}}
\subseteq
\cF(\tau)^{\tdeg \geq T({j})+\frac{r - \tdim(\sigma)}{2}}.
\]
The other inclusion also holds:
By Proposition~\ref{p:tPBraden},
the module $\cF(\tau)$ is generated in $t$-degrees strictly less than $T({j}) + \frac{\tdim(\tau) - \tdim(\sigma)}{2}$.
In other words, a homogeneous element of $t$-degree at least 
$T({j}) + \frac{\tdim(\tau) - \tdim(\sigma)}{2}$
can be generated using elements of lower $t$-degrees.
Since $r > \tdim(\tau)$,
we have 
$T({j})+\frac{r - \tdim(\sigma)}{2} > T({j}) + \frac{\tdim(\tau) - \tdim(\sigma)}{2}$,
hence the other inclusion also holds.
Now, by the induction hypothesis of the second induction, 
for each $1\leq i \leq d$ we have 
\[
\cW^{r -2\cdot \tdeg(x_i)}\cF(\tau) = \cF(\tau)^{\tdeg \geq T({j})+\frac{(r -2\cdot \tdeg(x_i) )- \tdim(\sigma)}{2}}.
\]
Hence we have
\begin{align*}
\cW^r\cF(\tau)  = \sum_{i=1}^d x_i \cW^{r -2\cdot \tdeg(x_i)}\cF(\tau)
& = \sum_{i=1}^d x_i 
\cF(\tau)^{\tdeg \geq T({j})+\frac{(r -2\cdot \tdeg(x_i) )- \tdim(\sigma)}{2}}\\
& = \cF(\tau)^{\tdeg \geq T({j})+\frac{r - \tdim(\sigma)}{2}}. \qedhere
\end{align*}
\end{proof}

\begin{prop}\label{p:cdWeightSheavesAreDecreasing}
Let $\cF$ be a pure sheaf on a fan $\Delta$.
Then the weight sheaves form a decreasing sequence of subsheaves of $\cF$, i.e.,
for each open set $U$ and $r\in  \Z$ 
we have
$\cW^r\cF(U) \supseteq \cW^{r+1}\cF(U)$.
\end{prop}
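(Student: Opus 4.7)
The plan is to mirror the strategy used in the proof of Proposition~\ref{p:WeightSheavesAreDecreasing} in the single-graded setting, since all the analogous ingredients are now available in the $(\Z_{\geq 0})^d$-graded context. The statement is a pointwise inclusion of submodules, so everything reduces to checking the inclusion on each stalk $\cF(\sigma)$.

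First, I would invoke the decomposition theorem for pure sheaves with structure sheaf $\cC$ recalled just before Section~7.3 (from \cite[p.~227]{EK07}) to fix an isomorphism
\[
\cF \simeq \bigoplus_{\alpha} \cL_{\sigma_\alpha}[-{j_\alpha}],
\]
where each ${j_\alpha} \in (\Z_{\geq 0})^d$. Next, I would check that the construction $\cF \mapsto \cW^r \cF$ commutes with direct sums of sheaves of $(\Z_{\geq 0})^d$-graded $\cC$-modules: this is immediate from the recursive definition, since both $\res^{-1}$ and the operation $\sum_i x_i(-)$ distribute across direct sums stalkwise. Hence it suffices to verify the inclusion $\cW^{r+1} \cL_\sigma[-j](U) \subseteq \cW^{r} \cL_\sigma[-j](U)$ for a single shifted simple sheaf.

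For the shifted simple sheaf, I would apply Proposition~\ref{p:cdcharofweight}, which gives for every open set $U$ the explicit description
\[
\cW^{r} \cL_\sigma[-j](U) = \cL_\sigma[-j](U)^{\tdeg \geq T(j) + \frac{r - \tdim(\sigma)}{2}}.
\]
The right-hand side is manifestly decreasing in $r$: raising $r$ by one can only raise the threshold $T(j) + (r - \tdim(\sigma))/2$, so the set of homogeneous elements of $t$-degree at least this threshold can only shrink. This yields the desired inclusion on each shifted simple summand, and then the direct-sum decomposition transports it back to $\cF$.

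I do not anticipate a genuine obstacle here; the argument is a straightforward transcription of the single-graded case once Proposition~\ref{p:cdcharofweight} is in hand. The only subtlety worth flagging is to make sure the characterization in Proposition~\ref{p:cdcharofweight} really is valid on arbitrary open sets $U$ and not only on the basic opens $\langle \tau \rangle$, but this is already built into the statement of that proposition (and in any case follows from the fact that sections over a general open set are tuples of compatible stalk-sections, where the condition ``$\tdeg \geq T(j) + (r - \tdim(\sigma))/2$'' can be checked stalkwise).
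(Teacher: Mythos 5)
Your proof is correct and takes essentially the same approach as the paper's: fix a direct-sum decomposition into shifted simple sheaves, note that the weight-sheaf construction commutes with direct sums, and then apply the explicit characterization from Proposition~\ref{p:cdcharofweight} to see the filtration is decreasing on each summand.
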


\begin{proof}
The proof is similar to that of Proposition~\ref{p:WeightSheavesAreDecreasing}:
We fix a decomposition of $\cF$ into shifts of simple sheaves:
\[
\cF \simeq \oplus_\alpha \cL_{\sigma_\alpha}[-j_\alpha],
\]
where each $j_\alpha\in (\Z_{\geq 0})^d$.
Since the construction of the weight sheaves commutes with direct sums, 
for each $r \in \Z$ we have
\[
\cW^r\cF \simeq \oplus_\alpha 
\cW^r
\cL_{\sigma_\alpha}[-j_\alpha].
\]
Hence it suffices to prove the statement for a shifted simple sheaf $\cL_{\sigma_\alpha}[-j_\alpha]$.
By Proposition~\ref{p:cdcharofweight},
we have 
$\cW^r
\cL_{\sigma_\alpha}[-j_\alpha](U) = \cL_{\sigma_\alpha}[-j_\alpha](U)^{\tdeg \geq T(j_\alpha)
+ \frac{r-\tdim(\sigma_\alpha)}{2}}$, 
which is naturally a decreasing filtration.
\end{proof}

For a pure sheaf $\cF$ on $\Delta$, we define filtrations on the $(\Z_{\geq 0})^d$-graded vector space $\ov{\cF(\Delta)}$ as follows.
The \emph{Hodge filtration} $F_\bullet$ of $\cF$ is defined to be the increasing filtration on $\ov{\cF(\Delta)}$ given by
\[
F_p := \ov{\cF(\Delta)}
^{\tdeg \leq p}.
\]
Thus we also have
\[
P^t(\ov{\cF(\Delta)}; t)  = \sum_p
\dim_\R (\Gr^F_p
\ov{\cF(\Delta)}
)
t^p.
\]
The \emph{weight filtration} $W^\bullet$ of $\cF$ is defined to be the decreasing filtration on $\ov{\cF(\Delta)}$ given by
\[
W^r := \rho(\cW^r \cF(\Delta)),
\]
where $\rho\colon \cF(\Delta)\to \ov{\cF(\Delta)}$ is the quotient map.
This is a well-defined filtration, since by Proposition~\ref{p:cdWeightSheavesAreDecreasing} the weight sheaves are a decreasing sequence of subsheaves.
Both filtrations commute with direct sums:
for every decomposition $\cF \simeq \oplus_\alpha \cL_{\sigma_\alpha}[-j_\alpha]$ of $\cF$ into shifts of simple sheaves, the Hodge (resp. weight) filtration of $\cF$ is the direct sum of the Hodge (resp. weight) filtrations of the shifted simple sheaves.

The graded vector space $\ov{\cF(\Delta)}$ is now equipped with the weight filtration $W^\bullet$ and the Hodge filtration $F_\bullet$.
As defined in Definition~\ref{defn:HodgeDelignePoly}, 
its \emph{Hodge--Deligne polynomial} 
is 
\[
E(\ov{\cF(\Delta)}, F_\bullet, W^\bullet; u, v) =
\sum_{p, q} \dim_\R
(
\Gr^F_p \Gr_W^{p+q} \ov{\cF(\Delta)}
)
u^p v^q,
\]
which 
specializes to its $t$-Poincar\'e polynomial under $u\mapsto t$ and $v\mapsto 1$.

\subsection{The mixed \texorpdfstring{$cd$}{cd}-index and the Hodge--Deligne polynomial}

We first define a linear map ${ \eta}'\colon \R\langle c', d'\rangle \otimes_\R \R\langle c, d \rangle  \to \R[u, v]$ as follows:
given homogeneous polynomials $f(c', d')$ and $g(c, d)$, 
we set
\begin{align*}\label{eq:cdchangeofvariables}
\eta'\big(f(c', d')\otimes g(c, d)\big)
=
v^{2^{\deg(f)}-1}
\eta \big(f(c, d); u v^{-1} \big)
\cdot
\eta \big(g(c, d); (u v)^{2^{\deg(f)}} \big).
\end{align*}
The map is well-defined, 
since $\eta(f(c, d); t)$ is a polynomial of degree at most $2^{\deg(f)} - 1$.
As a result, 
the image of the mixed $cd$-index $\Omega_\pi$ under $\eta'$ is given by
\begin{equation}\label{eq:goodcddefnuv}
\eta'
\big(
\Omega_\pi
(c', d', c, d)
\big) 
=
\sum_{\sigma\in \Delta}
v^{\tdim(\sigma)}
\eta
\big(
\ell^\Phi_{\pi^{-1}(\langle \sigma \rangle)} (c, d)
; u v^{-1}
\big)
\cdot
\eta
\big(
\Phi_{\lk_\Delta \sigma}(c, d)
; (u v)^{2^{\dim(\sigma)}}
\big).
\end{equation}
We note that this formula is analogous to Definition~\ref{defn:mixedh}, the definition of the mixed $h$-polynomial.

\begin{prop}
The linear map $\eta'$ is injective.
\end{prop}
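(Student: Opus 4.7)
The plan is to reduce the injectivity of $\eta'$ to two applications of Proposition~\ref{p:etaIsInjective}, after decomposing the domain by the degree of the $(c', d')$-factor.

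First, I would analyze which bidegrees $(a, b) \in (\Z_{\geq 0})^2$ can appear in $\eta'(f \otimes g)$ when $f$ is homogeneous of degree $k$. The factor $v^{2^k - 1} \eta(f; u v^{-1})$ contributes monomials of the form $u^j v^{2^k - 1 - j}$ for $0 \le j \le 2^k - 1$ (since $\eta(f; t)$ has degree at most $2^k - 1$ in $t$, as follows by induction from the recursion defining $\eta$), while $\eta(g; (uv)^{2^k})$ contributes monomials of the form $(uv)^{2^k m}$ for $m \ge 0$. Multiplying, every monomial $u^a v^b$ in $\eta'(f \otimes g)$ satisfies
\[
a + b + 1 = 2^k (1 + 2m),
\]
so the $2$-adic valuation $v_2(a + b + 1)$ equals exactly $k$. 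Since $k$ is uniquely determined by $(a, b)$, the subspaces $\eta'\bigl((\R\langle c', d'\rangle)_k \otimes \R\langle c, d\rangle\bigr)$ for distinct $k$ are supported on disjoint monomials of $\R[u, v]$, hence linearly independent. Consequently it suffices to prove injectivity of $\eta'$ on each degree-$k$ component separately.

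Fix $k$ and suppose $\eta'(\omega_k) = 0$ for some $\omega_k = \sum_\alpha f_\alpha \otimes g_\alpha$ with all $\deg(f_\alpha) = k$. Dividing by $v^{2^k - 1}$ and performing the substitution $s = u v^{-1}$, $t = u v$ converts the identity into
\[
\sum_\alpha \eta(f_\alpha; s) \cdot \eta(g_\alpha; t^{2^k}) = 0 \quad \text{in } \R[s, t].
\]
This substitution extends to an injective algebra map $\R[s, t] \hookrightarrow \R[u, v, v^{-1}]$: a monomial $s^i t^j$ is sent to $u^{i+j} v^{j-i}$, and the pair $(i, j)$ is recovered from $(i+j, j-i)$. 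The bilinear map $(f, g) \mapsto \eta(f; s) \cdot \eta(g; t^{2^k})$ then factors as
\[
(\R\langle c', d'\rangle)_k \otimes \R\langle c, d\rangle \xrightarrow{\;A_k \otimes B_k\;} \R[s] \otimes_\R \R[t^{2^k}] \xrightarrow{\;\sim\;} \R[s, t^{2^k}] \subseteq \R[s, t],
\]
where $A_k(f) := \eta(f; s)$ and $B_k(g) := \eta(g; t^{2^k})$. The map $A_k$ is injective by Proposition~\ref{p:etaIsInjective}, and $B_k$ is injective as the composition of the injective $\eta$ with the injective algebra substitution $t \mapsto t^{2^k}$. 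The tensor product of two injective linear maps of vector spaces over a field is injective, and the multiplication $\R[s] \otimes_\R \R[t^{2^k}] \to \R[s, t^{2^k}]$ is an isomorphism. Hence $\omega_k = 0$.

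The main obstacle will be the bookkeeping in the first step: verifying that the $2$-adic valuation of $a + b + 1$ pins down the degree $k$ of the $(c', d')$-factor, so that the images of the different degree-$k$ pieces truly occupy disjoint sets of monomials. Once that separation is in place, the remaining steps are a standard factorization through an injective tensor product.
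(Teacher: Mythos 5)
Your proof is correct. Both your argument and the paper's hinge on the same two ingredients — separating contributions by the degree $k$ of the $(c',d')$-factor and then invoking the injectivity of $\eta$ on each piece — but the technical routes differ. The paper constructs a combinatorial ``unbraiding'' map $\delta\colon \R[u,v]\to \R[u,v]\otimes_\R\R[u,v]$ defined monomial-by-monomial via the binary expansions of the exponents $a$ and $b$, and shows that $\delta\circ\eta'$ sends $f\otimes g$ directly to the tensor $v^{2^k-1}\eta(f;uv^{-1})\otimes \eta(g;(uv)^{2^k})$; the desired conclusion then follows by inspecting the two tensor components. You instead isolate the degree-$k$ sector by observing that $v_2(a+b+1)=k$ on every monomial in the image (a clean consequence of the fact that the first factor is homogeneous of degree $2^k-1$ and the second lives in $\R[(uv)^{2^k}]$), and then use the injective algebra substitution $(s,t)\mapsto(uv^{-1},uv)$ to factor the restriction of $\eta'$ through $A_k\otimes B_k\colon (\R\langle c',d'\rangle)_k\otimes\R\langle c,d\rangle\to\R[s]\otimes\R[t^{2^k}]\cong\R[s,t^{2^k}]$. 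Your version is somewhat more modular — it handles arbitrary sums of simple tensors explicitly by passing to the factorization through an injective tensor product, and makes the degree-separation step especially transparent via the $2$-adic valuation — while the paper's $\delta$ map is more self-contained and avoids passing to Laurent polynomials. Both are valid; each buys a bit of clarity in a different place.
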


\begin{proof}
We first define a linear map $\delta \colon \R[u, v]\to \R[u, v]\otimes_\R \R[u, v]$ as follows:
Given a monomial $m = u^a v^b$ in $R[u, v]$,
there exist unique subsets
$U, V \subseteq  \Z_{\geq 1}$
such that
$
m = \prod_{i\in U} u^{T(e_i)} \cdot 
\prod_{i\in V} v^{T(e_i)}.
$
Let 
\[
m_1  = \prod_{i\in U \setminus V} u^{T(e_i)} \cdot 
\prod_{i\in V \setminus U} v^{T(e_i)}
\ \text{ and } \
m_2 = \prod_{i\in U \cap V} (uv )^{T(e_i)}.
\]
Then we let $\delta$ to be the linear map 
defined by 
$\delta(m) = m_1\otimes m_2$.

Given homogeneous polynomials $f(c', d')$ and $g(c, d)$,
we observe that 
\[
\delta
\Big(
\eta'
\big(
f(c', d')\otimes g(c, d)
\big)
\Big)
=
v^{2^{\deg(f)}-1}
\eta \big(f(c, d); u v^{-1} \big)
\otimes 
\eta \big(g(c, d); (u v)^{2^{\deg(f)}} \big).
\]
Using the fact that 
the first component of the image
is homogeneous of degree $2^{\deg(f)} - 1$,
we can recover $\deg(f)$ and 
hence
$\eta (g(c, d); uv)$.
We can also recover $\eta (g(c, d); u)$ from the first component of the image.
Since $\eta$ is injective, 
we can recover both $f(c', d')$ and $g(c, d)$.
Hence $\eta'$ is injective. 
\end{proof}

Since $\eta'$ is injective, 
the content of the following theorem is that 
the mixed $cd$-index can be uniquely recovered from the Hodge--Deligne polynomial.

\resmixedcd

\begin{proof}
We compute the Hodge--Deligne polynomial of $(\ov{\pi_*\cL_\Sigma(\Delta)}, F_\bullet, W^\bullet)$
following the proof of Lemma~\ref{lem:hodgemixedh}:
We choose a decomposition $\pi_* \cL_\Sigma = \oplus_{\alpha} \cL_{\sigma_{\alpha}}[-{j_\alpha}]$ of $\pi_* \cL_\Sigma$ into shifts of simple sheaves on $\Delta$, 
where each $\sigma_\alpha \in \Delta$ and each $j_\alpha\in (\Z_{\geq 0})^d$.
We fix $\alpha$ and consider the shifted simple sheaf $\cL_{\sigma_\alpha}[-j_\alpha]$.
By Proposition~\ref{p:cdcharofweight}, 
the weight filtration $W^\bullet$ on 
$\ov{\cL_{\sigma_\alpha}[-j_\alpha](\Delta)}$
is given by 
\[
W^r = \ov{\cL_{\sigma_\alpha}[-j_\alpha](\Delta)}^{\tdeg \geq T(j_\alpha) + \frac{r-\tdim(\sigma_\alpha)}{2}}.
\]
Hence the associated graded vector space 
$\Gr_W^r \ov{\cL_{\sigma_\alpha}[-j_\alpha](\Delta)}$
is given by 
\[
\Gr_W^{r} \ov{\cL_{\sigma_\alpha}[-{j_\alpha}](\Delta)} = \begin{cases}
\ov{\cL_{\sigma_\alpha}[-{j_\alpha}](\Delta)}^{ \tdeg = T({j_\alpha})+\frac{r -  \tdim(\sigma_\alpha)}{2}} & \text{if } r -  \tdim(\sigma_\alpha) \text{ is even} ,\\
0 & \text{otherwise.}
\end{cases}
\]

We restrict to the case when $r -  \tdim(\sigma_\alpha)$ is even.
Let 
$k = \frac{r -  \tdim(\sigma_\alpha)}{2}$.
Then 
the associated graded vector space is given by
\[
\Gr^F_p \Gr_W^r \ov{\cL_{\sigma_\alpha}[-{j_\alpha}](\Delta)}
=
\begin{cases}
\ov{\cL_{\sigma_\alpha}[-{j_\alpha}](\Delta)}^{ \tdeg = T({j_\alpha})+k}
& \text{if } p = T({j_\alpha})+k,
 \\
0 & \text{otherwise.}
\end{cases}
\]
Thus the Hodge--Deligne polynomial of $(\ov{\pi_*\cL_\Sigma(\Delta)}, F_\bullet, W^\bullet)$ is given by
\begin{align*}
& E(\ov{\pi_* \cL_\Sigma(\Delta)}, F_\bullet, W^\bullet; u, v) \\
 & \qquad=
\sum_{\alpha}
\sum_{p, r}
\dim_\R ( \Gr^F_p \Gr_W^r \ov{\cL_{\sigma_\alpha}[-j_\alpha](\Delta)})
u^p v^{r-p}\\
&\qquad =
\sum_{\alpha}
\sum_{k}
\dim_\R ( \ov{\cL_{\sigma_\alpha}[-j_\alpha](\Delta)}^{\tdeg = T({j_\alpha})+k}
)
u^{T(j_\alpha)+k} v^{\tdim(\sigma_\alpha)+k-T(j_\alpha)},
\end{align*}
By removing the shift, 
we have
\begin{align*}
E(\ov{\pi_* \cL_\Sigma(\Delta)}, F_\bullet, W^\bullet; u, v) 
& =
\sum_{\alpha}
\sum_{k}
\dim_\R ( \ov{\cL_{\sigma_\alpha}(\Delta)}^{\tdeg = k}
)
u^{T(j_\alpha)+k} v^{\tdim(\sigma_\alpha)+k-T(j_\alpha)}
\\
&=
\sum_{\sigma\in \Delta}
v^{\tdim(\sigma)}
\sum_{\substack{\alpha\\ \sigma_\alpha = \sigma} }
(u v^{-1})^{T({j_\alpha})}
\sum_{k}
\dim_\R ( \ov{\cL_{\sigma}(\Delta)}^{\tdeg = k})
(uv)^{k}\\
&=
\sum_{\sigma\in \Delta}
v^{\tdim(\sigma)}
L^t(\pi_*\cL_\Sigma, \sigma; u v^{-1})
\cdot
P^t(\ov{\cL_\sigma(\Delta)}; uv).
\end{align*}
By Proposition~\ref{p:FactsAbouttPoin},
we have
\begin{align*}
P^t(\ov{\cL_\sigma(\Delta)}; uv) & = \eta(\Phi_{\lk_\Delta \sigma}(c, d); (uv)^{2^{\dim(\sigma)}}), \\
L^t(\pi_*\cL_\Sigma, \sigma; u v^{-1}) & =
\eta(\ell^\Phi_{\pi^{-1}(\langle \sigma \rangle)}(c, d); u v^{-1}).
\end{align*}
Hence the Hodge--Deligne polynomial is equal to 
\[
E(\ov{\pi_* \cL_\Sigma(\Delta)}, F_\bullet, W^\bullet; u, v) 
=
\sum_{\sigma\in \Delta}
v^{\tdim(\sigma)}
\eta(\ell^\Phi_{\pi^{-1}(\langle \sigma \rangle)}(c, d); u v^{-1})
\cdot
\eta(\Phi_{\lk_\Delta \sigma}(c, d); (uv)^{2^{\dim(\sigma)}}),
\]
which is precisely 
\eqref{eq:goodcddefnuv}, the image of the mixed $cd$-index under $\eta'$.
\end{proof}

\begin{remark}\label{rem:holdforGorenstein}
We expect the theorem to hold for a Gorenstein$^*$ subdivision of a near-Gorenstein$^*$ poset;
see \cite[Section~2]{EK07} for their definitions.
In fact, 
it suffices to show that 
if 
$\Lambda$ is the boundary of a Gorenstein$^*$ poset or is a near-Gorenstein$^*$ poset, then 
$\cL_\Lambda(\Lambda)$ and $\cL_\Lambda(\Lambda, \partial \Lambda)$ are free $C_{\rank(\Lambda)}$-modules dual to each other in the sense of \eqref{eq:cdduality}.
\end{remark}

\end{document}